\numberwithin{equation}{section}
\numberwithin{figure}{section}
\theoremstyle{plain}
\newtheorem{thm}{\protect\theoremname}[section]
\theoremstyle{definition}
\newtheorem{rem}[thm]{\protect\remarkname}
\theoremstyle{definition}
\newtheorem{defn}[thm]{\protect\definitionname}
\theoremstyle{plain}
\newtheorem{prop}[thm]{\protect\propositionname}
\theoremstyle{plain}
\newtheorem{lem}[thm]{\protect\lemmaname}
\theoremstyle{plain}
\theoremstyle{plain}
\newtheorem{cor}[thm]{\protect\corollaryname}
\theoremstyle{definition}
\theoremstyle{definition}
\theoremstyle{definition}
\newtheorem{examplex}[thm]{\protect\examplename}
\newcommand{\eps}{\varepsilon}
\newcommand{\R}{\mathbb{R}}
\newcommand{\N}{\mathbb{N}}
\newcommand{\Z}{\mathbb{Z}}
\newcommand{\mF}{\mathcal{F}}
\newcommand{\mH}{\mathcal{H}}
\newcommand{\mZ}{\mathcal{Z}}
\newcommand{\mX}{\mathcal{X}}
\newcommand{\bp}{\begin{proof}}
\newcommand{\ep}{\end{proof}}
\newcommand{\udim}{\overline{\dim}_B\,}
\newcommand{\ldim}{\underline{\dim}_B\,}
\newcommand{\hdim}{\dim_H}
\newcommand{\lmodbdim}{\underline{\dim}_{\text{\it MB}}\,}
\newcommand{\umodbdim}{\overline{\dim}_{\text{\it MB}}\,}
\DeclareMathOperator{\Id}{Id}
\DeclareMathOperator{\supp}{supp}
\DeclareMathOperator{\Leb}{Leb}
\DeclareMathOperator{\Ker}{Ker}
\DeclareMathOperator{\Per}{Per}
\DeclareMathOperator{\Orb}{Orb}
\DeclareMathOperator{\rank}{rank}
\DeclareMathOperator{\Lin}{Lin}
\DeclareMathOperator{\Gr}{Gr}
\DeclareMathOperator{\Span}{Span}
\providecommand{\conjecturename}{Conjecture}
\providecommand{\corollaryname}{Corollary}
\providecommand{\definitionname}{Definition}
\providecommand{\examplename}{Example}
\providecommand{\lemmaname}{Lemma}
\providecommand{\problemname}{Problem}
\providecommand{\propositionname}{Proposition}
\providecommand{\remarkname}{Remark}
\providecommand{\theoremname}{Theorem}
\providecommand{\taskname}{Task}
\begin{document}

\title{A probabilistic Takens theorem}

\author[K. Bara\'{n}ski]{Krzysztof Bara\'{n}ski$^1$}
\address{$^1$Institute of Mathematics, University of Warsaw, ul. Banacha 2, 02-097 Warszawa, Poland}
\email{baranski@mimuw.edu.pl}

\author[Y. Gutman]{Yonatan Gutman$^2$}
\address{$^2$Institute of Mathematics, Polish Academy of Sciences,
	ul. \'Sniadeckich 8, 00-656 Warszawa, Poland}
\email{y.gutman@impan.pl}

\author[A. \'{S}piewak]{Adam \'{S}piewak$^1$}
\email{a.spiewak@mimuw.edu.pl}

\begin{abstract}
Let $X \subset \R^N$ be a Borel set, $\mu$ a Borel probability measure on $X$ and $T\colon X \to X$ a locally Lipschitz and injective map. Fix $k \in \N$ strictly greater than the (Hausdorff) dimension of $X$ and assume that the set of $p$-periodic points of $T$ has dimension smaller than $p$ for $p=1, \ldots, k-1$. We prove that for a typical polynomial perturbation $\tilde{h}$ of a given locally Lipschitz function $h \colon  X \to \R$, the $k$-delay coordinate map $x \mapsto (\tilde{h}(x), \tilde{h}(Tx), \ldots, \tilde{h}(T^{k-1}x))$ is injective on a set of full $\mu$-measure. This is a probabilistic version of the Takens delay embedding theorem as proven by Sauer, Yorke and Casdagli. We also provide a non-dynamical probabilistic embedding theorem of similar type, which strengthens a previous result by Alberti, B\"{o}lcskei, De Lellis, Koliander and Riegler. In both cases, the key improvements compared to the non-probabilistic counterparts are the reduction of the number of required coordinates from $2\dim X$ to $\dim X$ and using Hausdorff dimension instead of the box-counting one. We present examples showing how the use of the Hausdorff dimension improves the previously obtained results.
\end{abstract}

\keywords{Takens delay embedding theorem, probabilistic embedding, Hausdorff dimension, box-counting dimension}

\subjclass[2000]{37C45 (Dimension theory of dynamical systems), 28A78 (Hausdorff and packing measures), 28A80 (Fractals)}

\maketitle

\section{Introduction}\label{sec:intro}
Consider an experimentalist observing a physical system modeled by a discrete time \emph{dynamical system} $(X,T)$, where $T\colon X \to X$ is the evolution rule and the \emph{phase space} $X$ is a subset of the Euclidean space $\R^N$. It often happens that, for a given point $x \in X$, instead of an actual sequence of $k$ states $x, Tx, \ldots, T^{k-1} x$, the observer's access is limited to the values of $k$ \emph{measurements} $h(x), h(Tx), \ldots, h(T^{k-1} x)$, for a real-valued \emph{observable} $h \colon  X \to \R$. Therefore, it is natural to ask, to what extent the original system can be reconstructed from such sequences of measurements and what is the minimal number $k$, referred to as the number of \emph{delay-coordinates}, required for a reliable reconstruction. These questions have emerged in the physical literature (see e.g.~\cite{PCFS80}) and  inspired a number of mathematical results, known as \emph{Takens-type delay embedding theorems}, stating that the reconstruction of $(X,T)$ is possible for certain observables $h$, as long as the measurements $h(x), h(Tx), \ldots, h(T^{k-1} x)$ are known for \emph{all} $x \in X$ and large enough $k$. 

The possibility of performing measurements at every point of the phase space is clearly unrealistic. However, such an assumption enables one to obtain theoretical results which justify the validity of actual procedures used by experimentalists (see e.g.~\cite{hgls05distinguishing, KY90,sgm90distinguishing,sm90nonlinear}). Note that one cannot expect a reliable reconstruction of the system based on the measurements of a given observable $h$, as it may fail to distinguish the states of the system (e.g. if $h$ is a constant function). It is therefore necessary (and rather realistic) to assume that the experimentalists are able to \emph{perturb} the given observable. The first result obtained in this area is the celebrated Takens delay embedding theorem for smooth systems on manifolds \cite[Theorem 1]{T81}. Due to its strong connections with actual reconstruction procedures used in the natural sciences, Takens theorem has been met with great interest among mathematical physicists (see e.g. \cite{HBS15, SYC91, Voss03}). Let us recall its extension due to Sauer, Yorke and Casdagli \cite{SYC91}. In this setting, the number $k$ of the delay-coordinates should be two times larger than the upper box-counting dimension of the phase space $X$ 
(denoted by $\udim X$; see Section~\ref{sec:prem} for the definition), and the perturbation is a polynomial of degree $2k$. The formulation of the result given here follows \cite{Rob11}.

\begin{thm}[{\cite[Theorem 14.5]{Rob11}}]\label{thm:standard_takens}
Let $X \subset \R^N$ be a compact set and let $T\colon  X \to X$ be Lipschitz and injective. Let $k \in \N$ be such that $k > 2\udim X$ and assume $2\udim(\{ x \in X : T^p x = x \}) < p$ for $p=1, \ldots, k-1$. Let $h \colon  \R^N \to \R$ be a Lipschitz function and $h_1, \ldots, h_m\colon  \R^N \to \R$ a basis of the space of polynomials of degree at most $2k$. For $\alpha = (\alpha_1, \ldots, \alpha_m) \in \R^m$ denote by $h_\alpha \colon  \R^N \to \R$ the map
\[ 
h_\alpha (x) = h(x) + \sum \limits_{j=1}^m \alpha_j h_j (x). 
\]
Then for Lebesgue almost every $\alpha = (\alpha_1, \ldots, \alpha_m) \in \R^m$, the transformation
\[ 
\phi_\alpha^T\colon X \to \R^k, \qquad  \phi_\alpha^T(x) = (h_\alpha(x), h_\alpha(Tx), \ldots, h_\alpha(T^{k-1} x)) 
\]
is injective on $X$.
\end{thm}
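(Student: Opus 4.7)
The plan is to show that the exceptional set
\[
\mathrm{Bad} = \{\alpha \in \R^m : \phi_\alpha^T \text{ fails to be injective on } X\}
\]
has Lebesgue measure zero, via a probe-style dimension-counting argument. Write
\[
\mathrm{Bad} = \bigcup_{(x,y) \in X^2 \setminus \Delta} E(x,y), \qquad E(x,y) = \{\alpha \in \R^m : h_\alpha(T^ix) = h_\alpha(T^iy) \text{ for } 0 \le i < k\},
\]
where $\Delta$ denotes the diagonal. For each fixed pair $(x,y)$, the set $E(x,y)$ is the kernel of a linear map $L_{x,y}\colon \R^m \to \R^k$ whose matrix entries depend Lipschitz-continuously on $(x,y)$, since $T$ is Lipschitz on $X$ and the $h_j$ are polynomials. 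The task reduces to showing $\udim \mathrm{Bad} < m$, which implies vanishing Lebesgue measure.

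First I would analyse the generic pairs, namely those for which the $2k$ points $x, Tx, \ldots, T^{k-1}x, y, Ty, \ldots, T^{k-1}y$ are pairwise distinct. Here $L_{x,y}$ is surjective, so $E(x,y)$ is an affine subspace of $\R^m$ of codimension exactly $k$; surjectivity follows from polynomial interpolation, as the space of polynomials of degree at most $2k$ on $\R^N$ realizes any prescribed values at any $2k + 1$ distinct points. Combining this with the general Lipschitz-covering inequality
\[
\udim \bigcup_{(x,y) \in P} E(x,y) \le \udim P + (m - k),
\]
applied to $P = (X^2 \setminus \Delta) \cap \{\text{generic pairs}\}$, one obtains the bound $2 \udim X + m - k < m$, using the hypothesis $k > 2\udim X$.

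The remaining non-generic pairs are handled via the periodic-orbit hypothesis. By injectivity of $T$, any coincidence $T^ix = T^jy$ with $0 \le i, j < k$ forces either $x = y$ (excluded) or both $x$ and $y$ to lie in $\Per_p(T) = \{z \in X : T^pz = z\}$ for some $1 \le p \le k - 1$. Stratifying accordingly and applying the same Lipschitz-covering estimate to each stratum yields
\[
\udim \bigcup_{(x,y) \in \Per_p(T)^2 \setminus \Delta} E(x,y) \le 2 \udim \Per_p(T) + (m - p) < m,
\]
by the hypothesis $2\udim \Per_p(T) < p$. Note that on $\Per_p(T)^2$ the codimension of $E(x,y)$ may drop to $p$, but this drop is exactly matched by the drop in base dimension. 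Summing over the finitely many strata gives $\udim \mathrm{Bad} < m$, completing the argument.

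The main technical obstacle is justifying the Lipschitz-covering inequality for $\udim \bigcup E(x,y)$: near the diagonal the map $L_{x,y}$ degenerates and the affine fibers $E(x,y)$ can vary discontinuously, so a naive product bound fails. The standard remedy is to stratify by dyadic scales $\|x - y\| \asymp 2^{-n}$ and rescale $L_{x,y}$ by dividing out the natural zero it has along the diagonal; after this renormalisation one obtains a uniformly Lipschitz family of surjective maps to which a direct covering argument applies, the summation over dyadic scales producing only a negligible logarithmic loss in the dimension bound. This is precisely where the polynomial form of the perturbation is essential, as it supplies the uniform quantitative nondegeneracy of the rescaled family.
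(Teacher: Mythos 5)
First, note that the paper does not prove this statement itself: it is quoted from \cite[Theorem 14.5]{Rob11} (originally Sauer--Yorke--Casdagli), and the proof the paper adapts for its probabilistic analogue (Theorem~\ref{thm:takens}) is a direct measure estimate rather than a dimension bound on the bad parameter set. One fixes a separation scale, covers the compact set of pairs $\{(x,y)\in X\times X:\|x-y\|\ge 1/n\}$ by roughly $N(X,\delta)^2$ products of $\delta$-balls, bounds $\Leb\{\alpha\in B_m(0,\rho):\|D_{x,y}\alpha+w_{x,y}\|\le C\delta\}$ by $C'\bigl(\delta/\sigma_k(D_{x,y})\bigr)^{k}$ via the singular value estimate (Lemma~\ref{lem: key_ineq_inter}), and lets $\delta\to 0$ using $k>2\udim X$. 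Your route via $\udim(\mathrm{Bad})<m$ is admissible in principle (a set of upper box-counting dimension less than $m$ meets every ball in a Lebesgue-null set, and nullity --- unlike $\udim$ --- is countably stable, which is what rescues the countable dyadic decomposition), but as written the argument has two genuine gaps.

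The most serious one is your classification of non-generic pairs. A coincidence $T^ix=T^jy$ with $i\neq j$ only forces, by injectivity of $T$, that $y\in\Orb(x)$; it does \emph{not} force $x$ or $y$ to be periodic. For instance $y=Tx$ with $x$ aperiodic is a non-generic pair belonging to none of your strata, and such pairs form a family of dimension $\udim X$ that your argument never estimates. Likewise, pairs with exactly one periodic point, or with two periodic points of different periods, are not covered by the strata $\Per_p(T)\times\Per_p(T)$. What is actually needed is a rank lemma in the spirit of Lemma~\ref{lem:claim} (cf.\ \cite[Lemma 4.3]{SYC91}): if the orbits of $x$ and $y$ are disjoint and the segment $y,Ty,\ldots,T^{k-1}y$ contains $r$ distinct points, then $\rank D_{x,y}\ge r$ (and symmetrically in $x$, so mixed periodic/aperiodic pairs still give full rank $k$), together with a separate verification for $y=T^jx$ that the rank remains $k$ when $x$ is aperiodic. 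The second gap is the covering inequality $\udim\bigcup_{(x,y)\in P}E(x,y)\le\udim P+(m-k)$, which is asserted rather than proved: it requires restricting to a bounded ball of $\alpha$'s and, crucially, a uniform lower bound on $\sigma_k(D_{x,y})$ over $P$ so that the affine fibres vary Lipschitz-continuously with $(x,y)$ --- exactly the uniformity that fails as the pair approaches the diagonal or a degenerate stratum. Your proposed remedy (dyadic rescaling in $\|x-y\|$) targets the diagonal, but for injectivity (as opposed to the immersion half of an embedding theorem) the diagonal is handled simply by decomposing into the countably many compact sets $\{\|x-y\|\ge 1/n\}$ and invoking compactness and continuity of $\sigma_k$; the degeneration you genuinely must control sits at the orbit and periodic strata, which the rescaling does not address.
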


The map $\phi_\alpha^T$ is called the \emph{delay-coordinate map}. Note that Theorem~\ref{thm:standard_takens} applies to any compact set $X \subset \R^N$, not necessarily a manifold. This is a useful feature, as it allows one to consider sets with a complicated geometrical structure, such as fractal sets arising as attractors in chaotic dynamical systems, see e.g.~\cite{ER85}. Moreover, the upper box-counting dimension of $X$ can be smaller than the dimension of any smooth manifold containing $X$, so Theorem~\ref{thm:standard_takens} may require fewer delay-coordinates than its smooth counterpart in \cite{T81}.

As it was noted above, usually an experimentalist may perform only a finite number of observations $h(x_j), \ldots, h(T^{k-1}x_{j})$ for some points $x_j \in X$, $j = 1, \ldots, l$. We believe it is realistic to assume there is an (explicit or implicit) \emph{random} process determining which initial states $x_{j}$ are accessible to the experimentalist. In this paper we are interested in the question of reconstruction of the system in presence of such process. Mathematically speaking, this corresponds to fixing a probability measure $\mu$ on $X$ and asking whether the delay-coordinate map $\phi_\alpha^T$ is injective \emph{almost surely} with respect to $\mu$. Since in this setting we are allowed to neglect sets of probability zero, it is reasonable to ask whether the minimal number of delay-coordinates sufficient for the reconstruction of the system can be smaller than $2\dim X$. Our main result states that this is indeed the case, and the number of delay-coordinates can be reduced by half for \emph{any} (Borel) probability measure. 

The problem of determining the minimal number of delay-coordinates required for reconstruction has been already considered in the physical literature. In \cite{PCFS80}, the authors analyzed an algorithm which may by interpreted as an attempt to determine this number in a probabilistic setting. Our work provides rigorous results in this direction. The following theorem is a simplified version of our result.

\begin{thm}[{\bf Probabilistic Takens delay embedding theorem}{}]\label{thm:takens_simple}
Let $X \subset \R^N$ be a Borel set, $\mu$ a Borel probability measure on $X$ and $T\colon  X \to X$ an injective, locally Lipschitz map. Take $k \in \N$ such that $k > \dim X$ and assume that for $p=1, \ldots, k-1$ we have $\dim(\{ x \in X : T^p x = x \}) < p$ or $\mu(\{ x \in X : T^p x = x \}) = 0$. Let $h \colon X \to \R$ be a locally Lipschitz function and $h_1, \ldots, h_m\colon  \R^N \to \R$ a basis of the space of real polynomials of $N$ variables of degree at most $2k-1$. For $\alpha = (\alpha_1, \ldots, \alpha_m) \in \R^m$ denote by $h_\alpha \colon  \R^N \to \R$ the map
	\[ h_\alpha (x) = h(x) + \sum \limits_{j=1}^m \alpha_j h_j (x). \]
	Then for Lebesgue almost every $\alpha = (\alpha_1, \ldots, \alpha_m) \in \R^m$, there exists a Borel set $X_\alpha \subset X$ of full $\mu$-measure, such that the delay-coordinate map
	\[ \phi_\alpha^T\colon X \to \R^k, \qquad  \phi_\alpha^T(x) = (h_\alpha(x), h_\alpha(Tx), \ldots, h_\alpha(T^{k-1} x))  \]
	is injective on $X_\alpha$.
\end{thm}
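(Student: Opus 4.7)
The plan is to reduce the statement to a pointwise measure-theoretic claim via Fubini, and then exploit the fact that the space of polynomials of degree at most $2k-1$ produces a codimension-$k$ linear condition on $\alpha$ for each pair $(x,y)$ in general position. Define
\[
Y = \{(\alpha, x) \in \R^m \times X : \exists\, y \in X \setminus \{x\} \text{ with } \phi_\alpha^T(x) = \phi_\alpha^T(y)\}.
\]
After verifying that $Y$ is jointly measurable (done by a countable exhaustion of $X$ by compact pieces on which all iterates $T^i$ are Lipschitz), Fubini reduces the theorem to showing that for $\mu$-almost every $x$ the section $A_x = \{\alpha : (\alpha, x) \in Y\}$ has Lebesgue measure zero in $\R^m$. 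Once that is in hand, for a.e.\ $\alpha$ the set $X_\alpha := X \setminus \{x : (\alpha, x) \in Y\}$ has full $\mu$-measure and $\phi_\alpha^T$ is automatically injective on it, since two distinct points $x_1, x_2 \in X_\alpha$ with equal image would place $(\alpha, x_1)$ in $Y$ via $y = x_2$.

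Fix such an $x$ and write $A_x = \bigcup_{y \in X \setminus \{x\}} V_{x,y}$, where $V_{x,y} = \{\alpha : \phi_\alpha^T(x) = \phi_\alpha^T(y)\}$ is the zero set of an affine map $\R^m \to \R^k$ whose linear part is the matrix $M_{x,y} = [h_j(T^i y) - h_j(T^i x)]_{0 \le i \le k-1,\ 1 \le j \le m}$. The central claim is that $M_{x,y}$ has rank $k$ outside a negligible $(x,y)$-set. In the generic situation where $x, Tx, \ldots, T^{k-1}x, y, \ldots, T^{k-1}y$ are $2k$ distinct points, any linear dependence among the rows produces a nonzero linear functional $p \mapsto \sum_i c_i\bigl(p(T^i y) - p(T^i x)\bigr)$ vanishing on the whole space of polynomials of degree at most $2k-1$; but any $2k$ distinct points in $\R^N$ admit Lagrange-type interpolation by such polynomials — for each point one takes a product of $2k-1$ affine forms, each vanishing on one of the remaining $2k-1$ points — which forces all $c_i$ to vanish. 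The degenerate situations, namely $x$ or $y$ being $p$-periodic with $p \le k-1$, or $y$ lying in the forward orbit of $x$ at distance at most $k-1$, are handled via the periodic-points hypothesis (placing the exceptional $x$ either in a $\mu$-null set or in a set of Hausdorff dimension $< p < k$) together with a variant of the same Lagrange argument applied to the at most $2k-1$ distinct orbit points involved.

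With $V_{x,y}$ a codimension-$k$ affine subspace of $\R^m$, I conclude via a slicing argument. A further countable decomposition of $X$, according to which $k$-subset of columns of $M_{x,y}$ is invertible, lets us split $\R^m = \R^k \oplus \R^{m-k}$ so that $V_{x,y}$ becomes the graph of a locally Lipschitz-in-$y$ affine map $\gamma \mapsto \beta(y, \gamma)$ from $\R^{m-k}$ to $\R^k$. For each fixed $\gamma$ the $\gamma$-slice of $A_x$ equals the image of $X$ under the locally Lipschitz map $y \mapsto \beta(y, \gamma)$, and therefore has Hausdorff dimension at most $\dim X < k$ and zero $k$-dimensional Lebesgue measure. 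Fubini on $\R^m$ then gives Lebesgue measure zero for $A_x$, as required.

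I expect the main obstacle to be the orbit combinatorics in the second step: keeping the rank-$k$ conclusion valid across all coincidence patterns of the two orbits while staying within the $\mu$-null or lower-dimensional exceptional set permitted by the hypothesis. The decisive ingredient, and the source of the improvement from $2\dim X$ in the classical Takens theorem to $\dim X$ here, is that Fubini allows us to fix $x$ and vary only $y$, contributing a single factor of $\dim X$, rather than joint variation of the pair $(x,y)$, which in the classical transversality argument costs twice as much dimension.
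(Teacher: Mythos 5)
Your overall architecture — Fubini reduction to $\Leb(A_x)=0$, the rank-$k$ claim for $D_{x,y}$ via Lagrange interpolation at $2k$ distinct points by polynomials of degree $2k-1$, and a separate treatment of orbit coincidences and low-period points — is exactly the paper's skeleton (its Lemma \ref{lem:claim} is your rank claim, and Remark \ref{rem:poly_interp} is your interpolation step). Your final measure estimate differs: where the paper covers $X$ by balls realizing $\mathcal H^{k}(X)=0$ and applies the singular-value bound of Lemma \ref{lem: key_ineq_inter} to each ball, you slice $\R^m=\R^k\oplus\R^{m-k}$ along an invertible $k\times k$ minor, view the $\gamma$-slice of $\bigcup_y V_{x,y}$ as a locally Lipschitz image of $X$ in $\R^k$, and apply Fubini in $\alpha$. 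That route is viable (note you also need a countable decomposition by a quantitative lower bound $|\det M_S|\ge 1/n$ to make $y\mapsto M_S(y)^{-1}$ locally Lipschitz, the analogue of the paper's decomposition by $\sigma_p(D_{x,y})\ge 1/n$), and it is somewhat more elementary than invoking singular values.

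There is, however, a genuine gap in your handling of the degenerate case. You define $Y$ with $y$ ranging over \emph{all} of $X\setminus\{x\}$, and you propose to treat the periodic-point hypothesis by discarding exceptional values of $x$. But the obstruction sits in the $y$-variable: if the hypothesis for some $p<k$ holds only because $\mu(\{T^p y=y\})=0$ while $\dim\{T^p y=y\}\ge p$ (which is permitted, since only one of the two alternatives is assumed), then for a fixed non-periodic $x$ the sets $V_{x,y}$ with $y$ ranging over the $p$-periodic points have codimension only $p$, and their union over a $(\ge p)$-dimensional family of $y$'s can have positive Lebesgue measure in $\R^m$; no exclusion of $x$'s repairs this, so $\Leb(A_x)=0$ fails for your $Y$. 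The fix — which is what the paper does — is to fix in advance a full-$\mu$-measure $\sigma$-compact set $\tilde X\subset X$ obtained by deleting from each $\Per_p(T)$ a $\mu$-null part so that what remains has $\mathcal H^{p}$-measure zero, to let $y$ range only over $\tilde X$ in the definition of $Y$ (with the slice over $\Per_p$-points taken in a rank-$p$ minor, matching dimension $<p$ against codimension $p$), and to take $X_\alpha=\tilde X\setminus A^\alpha$; injectivity then holds on $X_\alpha$ because any offending pair lies in $\tilde X$. With that modification, together with the countable-orbit argument for $y\in\Orb(x)$ (rank $\ge1$ suffices there), your proof closes.
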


In the above theorem, the dimension $\dim$ can be chosen to be any of $\hdim, \ldim, \udim$ (Hausdorff, lower and upper box-counting dimension; for the definitions see Section~\ref{sec:prem}). Recall that for any Borel set $X$ one has
\begin{equation}\label{eq:hdim_bdim}
\hdim X \leq \ldim X \leq \udim X
\end{equation} 
(see e.g.~\cite[Proposition~3.4]{falconer2014fractal}). 
Since the inequalities in \eqref{eq:hdim_bdim} may be strict, using the Hausdorff dimension instead of the box-counting one(s) may reduce the required number of delay-coordinates. In particular, there are compact sets $X \subset \R^N$ with $\hdim X=0$ and $\udim X = N$, hence Theorem \ref{thm:takens_simple} can reduce significantly the number of required delay-coordinates compared to Theorem \ref{thm:standard_takens} (in a probabilistic setting).

Notice that in Theorem~\ref{thm:takens_simple} we do not assume that the measure $\mu$ is $T$-invariant. However, the invariance of $\mu$ provides some additional benefits, as shown in the following remark.

\begin{rem}[{\bf Invariant measure case}{}]\label{rem:invariant_simple} Suppose that the measure $\mu$ in Theorem~\ref{thm:takens_simple} is additionally $T$-invariant, i.e.~$\mu(Y) = \mu(T^{-1}(Y))$ for every Borel set $Y \subset X$. Then  
the set $X_{\alpha}$ can be chosen to satisfy $T(X_{\alpha}) = X_{\alpha}$. Moreover, if $\mu$ is $T$-invariant and ergodic (i.e.~$T^{-1}(Y) = Y$ can occur only for sets $Y$ of $0$ or full $\mu$-measure), then the assumption on the periodic points of $T$ in Theorem~\ref{thm:takens_simple} can be omitted.
\end{rem}

Note that in the case when the measure $\mu$ is $T$-invariant, Theorem~\ref{thm:takens_simple} and Remark \ref{rem:invariant_simple} show that for a suitable choice of $X_\alpha$, the map $\phi_{\alpha}^T$ is injective on the invariant set $X_\alpha$, which implies that the dynamical system $(\hat X, \hat T)$ for $\hat X = \phi_{\alpha}^T(X_{\alpha})$, $\hat T = \phi_{\alpha}^T \circ T \circ (\phi_{\alpha}^T)^{-1}$, is a model of the system $(X,T)$ embedded in $\R^k$. 

An extended versions of Theorem~\ref{thm:takens_simple} and Remark~\ref{rem:invariant_simple} are presented and proved in Section~\ref{sec:takens} as Theorem~\ref{thm:takens} and Remark~\ref{rem:invariant}, respectively. Theorem~\ref{thm:takens} shows that the assumption $k > \dim X$ can be slightly weakened, and in addition to locally Lipschitz functions $h$, one can consider locally $\beta$-H\"older functions for suitable $\beta \in (0,1]$. Moreover, one can replace the probabilistic measure $\mu$ by any Borel $\sigma$-finite measure on $X$. For details, see Section~\ref{sec:takens}.

Notice that to eliminate the assumption on the periodic points of $T$ in Theorem~\ref{thm:takens_simple}, one can also consider systems with `few' or no periodic points. For instance, as proved in \cite{y69}, a flow on a subset of Euclidean space given by an autonomous differential equation $\dot{x}=F(x)$, where $F$ is Lipschitz with a constant $L$, has no periodic orbits of period smaller than $\frac{2\pi}{L}$. It follows that if $T$ is a $t$-time map for such a flow with $t < \frac{2\pi}{L \dim X}$, then it has no periodic points of periods smaller than $\dim X$ and therefore the assumption on periodic points in Theorem~\ref{thm:takens_simple} can be omitted (compare also \cite[Remark~1.2]{Gut16}). The same holds if the number of periodic points of a given period is finite, which by the Kupka--Smale theorem is a generic condition in the space of $C^r$-diffeomorphisms ($r\geq 1$) of a compact manifold equipped with the uniform $C^r$-topology\footnote{According to the Kupka--Smale theorem (\cite[Chapter 3, Theorem 3.6]{palis1982geometric}) it is generic that the periodic points are hyperbolic and thus periodic points of a given period are isolated by the Hartman--Grobman theorem (\cite[Chapter 2, Theorem 4.1]{palis1982geometric}).}.

As has been mentioned already, Takens theorems are used in order to justify actual (approximate) delay map procedures based on real experimental data (see e.g. \cite{SugiharaFishes,hgls05distinguishing,sgm90distinguishing,sm90nonlinear}). Note, however, that in the cited papers the dimension of the phase space $X$ is deduced \emph{a posteriori} from the properties of the \emph{time series} (orbits of the delay coordinate map for a given observable). It would be very interesting to know whether in the literature it has been observed for some experimental data originating from a space $X$ with known dimension that it is sufficient to have $k \approx \dim X$ (instead of $k \approx 2 \dim X$) delay-coordinates (in other words, time series of length $k$) in the framework of such procedures.

In this paper we focus our attention to the case when the space $X$ is a subset of a finite-dimensional Euclidean space. Takens-type delay embedding theorems have also been extended to finite-dimensional subsets of Banach spaces (see e.g.~\cite{Rob05}). It is a natural question, whether our probabilistic embedding theorems can also be transferred into the infinite-dimensional setup. This problem will be considered in a subsequent work.

Takens-type delay embedding theorems can be seen as dynamical versions of \emph{embedding theorems} which specify when a finite-dimensional set can be embedded into a Euclidean space. Indeed, under the assumptions of Theorem~\ref{thm:standard_takens}, the delay-coordinate map $\phi_\alpha^T$ is an embedding of $X$ into $\R^k$ for typical $\alpha$. Embedding theorems in various categories have been extensively studied in a number of papers (see Section~\ref{sec:embedding} for a more detailed discussion). Recently, Alberti, B\"{o}lcskei, De Lellis, Koliander and Riegler \cite{Riegler18} proved a probabilistic embedding theorem involving the modified lower box-counting dimension of the set (see Theorem~\ref{thm:riegler}). We are able to improve this result by considering the Hausdorff dimension. Below we present a simplified version of our theorem, which can be seen as a non-dynamical counterpart of Theorem~\ref{thm:takens_simple}. 

\begin{thm}[{\bf Probabilistic embedding theorem}{}]\label{thm:embed_simple}
Let $X \subset \R^N$ be a Borel set and let $\mu$ be a Borel probability measure on $X$. Take $k \in \N$ such that the $k$-th Hausdorff measure of $X$ is zero $($it suffices to take $k > \hdim X)$ and let $\phi \colon X \to \R^k$ be a locally Lipschitz map. Then for Lebesgue almost every linear transformation $L \colon  \R^N \to \R^k$ there exists a Borel set $X_L \subset X$ of full $\mu$-measure, such that $\phi_L = \phi + L$ is injective on $X_L$.
\end{thm}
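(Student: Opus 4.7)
The plan is to prove that for Lebesgue almost every $L$ the ``non-injectivity'' set
\[
B_L \teq \{x\in X : \exists\, y \in X\setminus\{x\},\ \phi_L(x)=\phi_L(y)\}
\]
has $\mu$-measure zero; then any Borel subset $X_L\subseteq X\setminus B_L$ of full $\mu$-measure (available because $B_L$, being the projection to the first coordinate of the Borel set $\{(x,y) : x\neq y,\ \phi_L(x)=\phi_L(y)\}$, is analytic and hence universally measurable) does the job. Since $\phi$ is locally Lipschitz, I decompose $X=\bigcup_{n\in\N} X_n$ into bounded Borel pieces on each of which $\phi$ is Lipschitz, run the argument on each pair $(X_n,X_m)$ separately, and take a countable union; so from now on I assume $X$ is bounded and $\phi$ is Lipschitz.

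By Fubini applied to the universally measurable set $\{(x,L) : x\in B_L\} \subset X \times \R^{kN}$, it is enough to show that for every $x\in X$ the ``bad $L$'' slice
\[
A_x \teq \{L \in \R^{kN} : \exists\, y\in X\setminus\{x\},\ L(y-x)=\phi(x)-\phi(y)\}
\]
has Lebesgue measure zero. To analyse $A_x$, I introduce the Lipschitz map
\[
\psi_x:X\setminus\{x\}\to(\R^N\setminus\{0\})\times\R^k,\qquad \psi_x(y)=(y-x,\,\phi(x)-\phi(y)),
\]
and set $V_x\teq \psi_x(X\setminus\{x\})$; the Lipschitz image bound together with $\mathcal{H}^k(X)=0$ yields $\mathcal{H}^k(V_x)=0$. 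Writing $L=[c_1,\ldots,c_N]$ in columns, $A_x$ is the union over $(v,w)\in V_x$ of the codimension-$k$ affine subspaces $\{L : Lv=w\}$.

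To parametrize this union by Lipschitz maps, split $V_x = \bigcup_{j=1}^{N}\bigcup_{n\in\N} V_x^{(j,n)}$ with $V_x^{(j,n)} = V_x \cap \{|v_j|\geq 1/n\}$ (which covers $V_x$ since $v\neq 0$ on it). On $V_x^{(j,n)}$ the equation $Lv=w$ can be solved for the $j$-th column as $c_j = v_j^{-1}\bigl(w - \sum_{i\neq j} v_i c_i\bigr)$, yielding, for each cube $Q_m = [-m,m]^{k(N-1)}$, a Lipschitz map
\[
G_{j,n,m} : V_x^{(j,n)} \times Q_m \to \R^{kN}
\]
whose images together exhaust $A_x$. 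The product estimate $\mathcal{H}^{kN}(V_x^{(j,n)}\times Q_m) = 0$ (which follows from $\mathcal{H}^k(V_x^{(j,n)})=0$ and $\mathcal{H}^{k(N-1)}(Q_m)<\infty$ by combining an efficient $\mathcal{H}^k$-cover of the first factor with a dyadic tiling of the second) together with Lipschitz invariance of Hausdorff null sets gives $\Leb(G_{j,n,m}(V_x^{(j,n)}\times Q_m))=0$, and the countable union in $(j,n,m)$ yields $\Leb(A_x)=0$.

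The main technical nuisance I expect is the product estimate for Hausdorff measures in the borderline case $\hdim X = k$ (which is permitted under the hypothesis $\mathcal{H}^k(X)=0$): one must invoke the sharp form ``$\mathcal{H}^k(A)=0$ and $B\subset\R^b$ bounded imply $\mathcal{H}^{k+b}(A\times B)=0$'' rather than a mere Hausdorff-dimension inequality, since the latter would only recover a version of the theorem under $\hdim X<k$. The other delicate point is the measurability of $B_L$, which is analytic rather than Borel and forces both the use of a universally measurable Fubini and the extraction of $X_L$ as a Borel subset of full $\mu$-measure inside $X\setminus B_L$.
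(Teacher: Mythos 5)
Your proof is correct, and its core analytic step is genuinely different from the paper's. The paper fixes $x$, covers $X$ minus a $1/n$-neighbourhood of $x$ by balls $B_N(y_i,\eps_i)$ with $\sum_i\eps_i^k\le\eps$, and applies the quantitative estimate $\eta_{N,k}(\{L:\|L(y_i-x)+z\|\le\eps\})\le CN^{k/2}\eps^k/\|y_i-x\|^k$ (Lemma~\ref{lem: key_ineq_linear}, after first reducing to the bounded parameter block $E^N_k$ by a scaling trick); summing over $i$ gives $\eta_{N,k}(A_x)\lesssim n^k\eps$. You instead exhibit $A_x$ as a countable union of Lipschitz images of sets $V_x^{(j,n)}\times Q_m$, where $V_x=\psi_x(X\setminus\{x\})$ is a Lipschitz image of $X$ and hence $\mH^k$-null, and you conclude via the sharp product inequality ($\mH^k(A)=0$ and $Q$ a cube in $\R^b$ imply $\mH^{k+b}(A\times Q)=0$) together with Lipschitz invariance of Hausdorff null sets. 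Both arguments share the decisive feature that Fubini in $x$ reduces everything to covers of $X$ rather than of $X-X$, which is exactly what permits the hypothesis $\mH^k(X)=0$ in place of a box-dimension bound; your product estimate is valid even in the borderline case $\hdim X=k$ (the dyadic-tiling computation gives $\mH^{k+b}(A\times Q)\le C\,\mH^k(A)$ with the side of $Q$ entering only through the constant), and your solve-for-one-column parametrization of $\{L:Lv=w\}$ over the pieces $|v_j|\ge 1/n$ does exhaust $A_x$. What the paper's route buys is a single quantitative lemma whose matrix analogue (Lemma~\ref{lem: key_ineq_inter}, with only a lower bound on the $p$-th singular value of $D_{x,y}$) carries the dynamical Theorem~\ref{thm:takens}, where your column-solving device would be awkward; what your route buys is the elimination of any explicit measure estimate in the purely linear case. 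One stylistic difference: the paper avoids analytic sets entirely by first replacing $X$ with a $\sigma$-compact subset of full measure (Lemma~\ref{lem:dimh_fsigma}), so all projections in sight are $\sigma$-compact, hence Borel (Lemma~\ref{lem:measurability}), and ordinary Borel Fubini applies; your appeal to universal measurability and the completed product measure is correct but heavier than necessary.
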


The extended version of the theorem is formulated and proved in Section~\ref{sec:embedding} as Theorem~\ref{thm:embed}. In particular, we obtain the following geometric corollary (see Section~\ref{sec:embedding} for details).

\begin{cor}[{\bf Probabilistic injective projection theorem}{}]\label{cor:embed-proj_simple}
Let $X \subset \R^N$ be a Borel set and let $\mu$ be a Borel probability measure on $X$. Then for every $k > \hdim X$ and almost every $k$-dimensional linear subspace $S \subset \R^N$, the orthogonal projection of $X$ into $S$ is injective on a full $\mu$-measure subset of $X$.
\end{cor}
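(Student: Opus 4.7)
The plan is to derive the corollary from the probabilistic embedding theorem (Theorem~\ref{thm:embed_simple}) applied to the trivial map $\phi\equiv 0$, and then to transfer the resulting Lebesgue-typical statement for linear maps into an invariant-measure-typical statement for $k$-dimensional subspaces via a rotation-invariant sampling trick. First, I would apply Theorem~\ref{thm:embed_simple} with $\phi\equiv 0$ (which is trivially locally Lipschitz) to conclude that for Lebesgue almost every linear map $L\colon\R^N\to\R^k$, identified with an element of $\R^{Nk}$, there exists a Borel set $X_L\subset X$ of full $\mu$-measure on which $L$ is injective. Enlarging the exceptional set by the Lebesgue null set of non-surjective linear maps, I obtain a Lebesgue null set $E\subset\R^{Nk}$ of ``bad'' maps.

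Next, I would observe that $E$ is invariant under composition with $GL(k)$ from the left: if $L$ is injective on $X_L$, then so is $A\circ L$ for any $A\in GL(k)$, and both surjectivity and non-surjectivity are preserved under left multiplication by an invertible matrix. Hence, for surjective $L$, membership in $E$ depends only on $\Ker L$, equivalently only on the $k$-dimensional subspace $(\Ker L)^\perp$. In particular, for any $k$-dimensional subspace $S$ and any linear isomorphism $\iota_S\colon S\to\R^k$, the composition $\iota_S\circ\pi_S$ lies in $E$ if and only if the orthogonal projection $\pi_S\colon\R^N\to S$ fails to be injective on any full $\mu$-measure subset of $X$, and this condition is independent of the choice of $\iota_S$.

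To transfer the conclusion to the Grassmannian, I would sample a random linear map $L$ by drawing $k$ independent standard Gaussian vectors $g_1,\ldots,g_k\in\R^N$ and setting $L(x)=(\langle x,g_1\rangle,\ldots,\langle x,g_k\rangle)$. The induced distribution on $\R^{Nk}$ is absolutely continuous with respect to Lebesgue measure, so $L\notin E$ almost surely. On this full-probability event, $L$ is surjective and $(\Ker L)^\perp=\Span(g_1,\ldots,g_k)$; by the $GL(k)$-invariance of $E$, the orthogonal projection onto this subspace is then injective on a full $\mu$-measure subset of $X$. Finally, by $O(N)$-invariance of the Gaussian distribution on $\R^N$, the random subspace $\Span(g_1,\ldots,g_k)$ has distribution equal to the unique $O(N)$-invariant probability measure on the Grassmannian, so the set of $k$-dimensional subspaces $S$ satisfying the conclusion of the corollary has full invariant measure. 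The only step needing any care beyond routine bookkeeping is the identification of this pushforward distribution with the invariant Grassmannian measure, which follows from the uniqueness of rotation-invariant probability measures on the Grassmannian; an alternative would be to disintegrate Lebesgue measure on the open set of surjective maps along the $GL(k)$-fibration $L\mapsto(\Ker L)^\perp$, but the Gaussian sampling bypasses this.
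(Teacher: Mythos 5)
Your proposal is correct and follows essentially the same route as the paper: both apply the probabilistic embedding theorem with $\phi\equiv 0$ and then use the fact that a surjective linear map $L$ and the orthogonal projection onto $(\Ker L)^\perp$ differ by left composition with an invertible map (the paper realizes this via an explicit $V_L$ satisfying $V_L\circ L=\Pi_{S_L}$ with $S_L=\Span(l_1,\ldots,l_k)$, while you phrase it as $GL(k)$-invariance of the exceptional set). Your Gaussian sampling step, which identifies the pushforward of the tuple distribution with the invariant measure on $\Gr(k,N)$ via uniqueness of rotation-invariant probability measures, makes explicit a measure-transfer point that the paper leaves implicit.
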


Notice that by the Marstrand--Mattila projection theorem (see \cite{Marstrand,Mattila-proj}), if $X \subset \R^N$ is Borel and $k \geq \hdim X$, then for almost all $k$-dimensional linear subspaces $S\subset \R^N$, the image of $X$ under the orthogonal projection into $S$ has Hausdorff dimension equal to $\hdim X$.
Note also that Sauer and Yorke proved in \cite{SauerYorke97} that the dimension\footnote{Any one of the dimensions mentioned above and denoted by $\dim$.} of a bounded Borel subset $X$ of $\R^N$  is preserved under typical smooth maps and typical delay-coordinate maps into $\R^k$ as long as $k \ge \dim X$.

In this paper we also provide several examples. Example \ref{ex:circle} shows that in general the condition $k > \hdim X$ in Theorem \ref{thm:embed_simple} cannot be replaced by $k \geq \hdim X$. Example \ref{ex:no_linear_takens} shows that linear perturbations of the observable are not sufficient for Takens theorem. Section~\ref{sec:examples} contains a pair of examples. The first one is based on Kan's example from the Appendix to \cite{SYC91}, showing that condition $k > 2\hdim X$ is not sufficient for existence of a linear transformation into $\R^k$ which is injective on $X$. As in the probabilistic setting one can work with the Hausdorff dimension, we consider a set $X \subset \R^2$ similar to the one provided by Kan, which cannot be embedded linearly into $\R$, but when endowed with a natural probability measure, almost every linear transformation $L\colon  \R^2 \to \R$ is injective on a set of full measure. The second example provides a probability measure with $\hdim \mu < \underline{\dim}_{\,\it MB\,} \mu$, showing that Theorem \ref{thm:embed_simple} strengthens a previous result from \cite{Riegler18}.

\subsection*{Organization of the paper} The paper is organized as follows. In Section~\ref{sec:prem} we introduce notation, definitions and preliminary results. Section~\ref{sec:embedding} contains the formulation and proof of the extended version of the probabilistic embedding theorem (Theorem~\ref{thm:embed}), while Section~\ref{sec:takens} is devoted to the proof of the extended version of the probabilistic Takens delay embedding theorem (Theorem~\ref{thm:takens}). In Section~\ref{sec:examples} we present examples showing how the use of the Hausdorff dimension improves the previously obtained results.

\subsection*{Acknowledgements} We are grateful to Erwin Riegler for helpful discussions and to the anonymous referees for helpful comments. Y.~G. and A.~\'S. were partially supported by the National Science Centre (Poland) grant 2016/22/E/ST1/00448. 

\section{Preliminaries}\label{sec:prem}
Consider the Euclidean space $\R^N$ for $N \in \N$, with the standard inner product $\langle \cdot, \cdot\rangle$ and the norm $\| \cdot \|$. The open $\delta$-ball around a point $x \in \R^N$ is denoted by $B_N(x, \delta)$. By $|X|$ we denote the diameter of a set $X \subset \R^N$. We say that function $\phi\colon  X \to \R^k,\ X \subset \R^N$ is \emph{locally $\beta$-H\"{o}lder} for $\beta > 0$ if for every $x \in X$ there exists an open set $U \subset \R^N$ containing $x$ such that $\phi$ is $\beta$-H\"{o}lder on $U \cap X$, i.e. there exists $C>0$ such that
\[
\| \phi(x) - \phi(y) \| \leq C\|x - y\|^{\beta}
\]
for every $x,y \in U \cap X$. We say that $\phi$ is \emph{locally Lipschitz} if it is locally $1$-H\"{o}lder. 

For $k \le N$ we write $\Gr(k, N)$ for the $(k, N)$-{\emph{Grassmannian}, i.e.~the space of all $k$-dimensional linear subspaces of $\R^N$, equipped with the standard rotation-invariant (Haar) measure, see \cite[Section 3.9]{mattila} (and \cite{FR02} for another construction of a rotation-invariant measure on the Grassmannian). By $\eta_N$ we denote the normalized Lebesgue measure on the unit ball $B_N(0,1)$, i.e.
\[
\eta_N = \frac{1}{\Leb (B_N(0,1))} \Leb|_{B_N(0,1)},
\]
where $\Leb$ is the Lebesgue measure on $\R^N$. 

For $s>0$, the \emph{$s$-dimensional $($outer$)$ Hausdorff measure} of a set $X \subset \R^N$ is defined  as
\[ \mH^s(X) = \lim \limits_{\delta \to 0}\ \inf \Big\{ \sum \limits_{i = 1}^{\infty} |U_i|^s : X \subset \bigcup \limits_{i=1}^{\infty} U_i,\ |U_i| \leq \delta  \Big\}.\]
The \emph{Hausdorff dimension} of $X$ is given as
\[ \hdim X = \inf \{ s > 0 : \mathcal{H}^s(X) = 0 \} = \sup \{ s > 0 : \mathcal{H}^s(X) = \infty \}. \]
For a bounded set $X \subset \R^N$ and $\delta>0$, let $N(X, \delta)$ denote the minimal number of balls of diameter at most $\delta$ required to cover $X$. The \emph{lower} and \emph{upper box-counting $($Minkowski$)$ dimensions} of $X$ are defined as
\[ \ldim X = \liminf \limits_{\delta \to 0} \frac{\log N(X,\delta)}{-\log \delta}\quad \text{ and }\quad \udim X = \limsup \limits_{\delta \to 0} \frac{\log N(X,\delta)}{-\log \delta}. \]
The lower (resp.~upper) box-counting dimension of an unbounded set is defined as the supremum of the lower (resp.~upper) box-counting dimensions of its bounded subsets.

The \emph{lower} and \emph{upper modified box-counting dimensions} of $X \subset \R^N$ are defined as
\begin{align*}
\lmodbdim X &= \inf \Big\{ \sup \limits_{i \in \N} \ldim K_i : X \subset  \bigcup \limits_{i=1}^{\infty} K_i,\ K_i \text{ compact} \Big\},\\
\umodbdim X &= \inf \Big\{ \sup \limits_{i \in \N} \udim K_i : X \subset \bigcup \limits_{i=1}^{\infty} K_i,\ K_i \text{ compact} \Big\}.
\end{align*}
With this notation, the following inequalities hold:
\begin{equation}\label{eq:hdim_mbdim_bdim}
\begin{aligned}
&\hdim X  \leq \lmodbdim X \leq \umodbdim X \leq \udim X,\\
&\hdim X \leq \lmodbdim X \leq \ldim X \leq \udim X.
\end{aligned}
\end{equation}
We define dimension of a finite Borel measure $\mu$ in $\R^N$ as
\[
\dim \mu = \inf\{ \dim X: X \subset \R^N \text{ is a Borel set of full $\mu$-measure} \}.
\]
Here $\dim$ may denote any one of the dimensions defined above. Recall that for a measure $\mu$ on a set $X$ and a measurable $Y \subset X$ we say that $Y$ is of \emph{full $\mu$-measure}, if $\mu(X\setminus Y) = 0$.
For more information on dimension theory in Euclidean space see \cite{falconer2014fractal, mattila, Rob11}.

For $N,k \in \N$ let $\Lin(\mathbb R^N; \mathbb R^k)$ be the space of all linear transformations $L \colon  \R^N \to \R^k$. Such transformations are given by 
\begin{equation}\label{eq:L}
L x = \big(\langle l_1, x \rangle , \ldots, \langle l_k, x \rangle\big),
\end{equation}
where $l_1, \ldots, l_k \in \R^N$. Thus, the space $\Lin(\R^N; \R^k)$ can be identified with $(\R^N)^k$, and the Lebesgue measure on $\Lin(\R^N; \R^k)$ is understood as $\bigotimes \limits_{j=1}^k \Leb$, where $\Leb$ is the Lebesgue measure in $\R^N$. Within the space $\Lin(\R^N; \R^k)$ we consider the space $E^N_k$ consisting of all linear transformations $L \colon  \R^N \to \R^k$ of the form \eqref{eq:L}, for which $l_1, \ldots, l_k \in B_N(0, 1)$. Note that by the Cauchy-Schwarz inequality,
\begin{equation}\label{eq:E}
\|Lx\| \leq \sqrt{N} \,\|x\|
\end{equation}
for every $L \in E^N_k$ and $x \in \R^N$.

By $\eta_{N,k}$ we denote the normalized Lebesgue measure on $E^N_k$, i.e.~the probability measure on $E^N_k$ given by 
\[
\eta_{N,k} = \bigotimes \limits_{j=1}^k \frac{1}{\Leb(B_N(0, 1))} \Leb|_{B_N(0, 1)}.
\]
The following geometrical inequality, used in \cite{HK99} (see also \cite[Lemma 4.1]{Rob11}) is the key ingredient of the proof of Theorem~\ref{thm:embed}.

\begin{lem}\label{lem: key_ineq_linear}
Let $L \colon  \R^N \to \R^k$ be a linear transformation. Then for every $x \in \R^N \setminus \{ 0 \}$, $z \in \R^k$ and $\eps>0$,
	\[
	\eta_{N,k} (\{ L \in E^N_k : \|Lx + z \| \leq \eps \}) \leq CN^{k/2}\frac{\eps^k}{\|x\|^k}, \]
	where $C > 0$ is an absolute constant.
\end{lem}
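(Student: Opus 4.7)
The plan is to exploit the product structure of $\eta_{N,k} = \bigotimes_{j=1}^k \eta_N$ and reduce the statement to an elementary slab estimate in the Euclidean unit ball. Writing $L \in E^N_k$ as the tuple of its rows $l_1,\ldots,l_k \in B_N(0,1)$, formula \eqref{eq:L} gives that the $j$-th coordinate of $Lx+z$ equals $\langle l_j,x\rangle+z_j$. Since $\|Lx+z\|\leq\eps$ forces $|\langle l_j,x\rangle+z_j|\leq\eps$ for every $j=1,\ldots,k$, Fubini's theorem immediately yields
\[
\eta_{N,k}\bigl(\{L\in E^N_k:\|Lx+z\|\leq\eps\}\bigr) \;\leq\; \prod_{j=1}^k \eta_N\bigl(\{l\in B_N(0,1):|\langle l,x\rangle+z_j|\leq\eps\}\bigr),
\]
so the problem reduces to bounding a single factor uniformly in $z_j \in \R$.

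For the per-coordinate bound, I would invoke the rotational invariance of $\eta_N$ to assume $x=\|x\|e_1$. The condition then becomes $|\,\|x\|\,l^{(1)}+z_j|\leq\eps$, which places the first coordinate of $l$ in a real interval $I_j$ of length $2\eps/\|x\|$. Slicing $B_N(0,1)$ perpendicularly to $e_1$ and writing $\alpha_n = \Leb(B_n(0,1))$, one obtains
\[
\eta_N\bigl(\{l\in B_N(0,1): l^{(1)}\in I_j\}\bigr) \;\leq\; \frac{1}{\alpha_N}\int_{I_j\cap[-1,1]}\alpha_{N-1}(1-t^2)^{(N-1)/2}\,dt \;\leq\; \frac{2\eps}{\|x\|}\cdot\frac{\alpha_{N-1}}{\alpha_N},
\]
where in the last step I use the trivial pointwise bound $(1-t^2)^{(N-1)/2}\leq 1$ together with $|I_j|=2\eps/\|x\|$.

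The final ingredient is the classical volume-ratio asymptotic: using $\alpha_n = \pi^{n/2}/\Gamma(n/2+1)$ one has $\alpha_{N-1}/\alpha_N = \Gamma(N/2+1)/(\sqrt{\pi}\,\Gamma((N+1)/2))$, and an elementary form of Stirling's formula (or Gautschi's inequality) gives $\alpha_{N-1}/\alpha_N \leq c_0\sqrt{N}$ for an absolute $c_0$. Inserting this into each of the $k$ factors and multiplying produces the required estimate $C N^{k/2}\eps^k/\|x\|^k$. The one point where I would be careful, and which I expect to be the main piece of bookkeeping, is making $C$ genuinely absolute rather than $k$-dependent: a naive product of the per-coordinate bound gives a constant of the form $(2c_0)^k$, so one needs to invoke the sharp asymptotic $\alpha_{N-1}/\alpha_N \sim \sqrt{N/(2\pi)}$, which allows $2c_0<1$ and hence keeps $(2c_0)^k\leq 1$ uniformly in $k$. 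Apart from that, the argument is a direct Fubini-and-slicing computation.
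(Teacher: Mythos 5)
Your proposal is correct and follows the same route as the proof the paper points to (the lemma is stated here without proof, with a citation to [HK99] and [Rob11, Lemma~4.1], where exactly this Fubini-plus-slab argument is carried out): use the product structure of $\eta_{N,k}$ to reduce to a single coordinate, rotate $x$ to $\|x\|e_1$, and bound the normalized volume of a slab of width $2\eps/\|x\|$ in $B_N(0,1)$ by $(2\eps/\|x\|)\,\alpha_{N-1}/\alpha_N$. The one point to tighten is your claim that the sharp asymptotic gives $\alpha_{N-1}/\alpha_N\le\sqrt{N/(2\pi)}$ uniformly: this fails for small $N$ (e.g.\ $\alpha_0/\alpha_1=1/2>1/\sqrt{2\pi}$), but what you actually need is $2\alpha_{N-1}/\alpha_N\le\sqrt{N}$ for all $N\ge 1$, which does hold (with equality at $N=1$) by Gautschi--Wendel, $\Gamma(x+1/2)\le x^{1/2}\Gamma(x)$ applied at $x=N/2+1/2$ together with $2(N+1)\le\pi N$ for $N\ge 2$; this yields the lemma with the absolute constant $C=1$.
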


For $L \in \Lin(\mathbb R^m; \mathbb R^k)$, where $m,k \in \N$, denote by $\sigma_p(L)$, $p \in \{1, \ldots, k\}$, the $p$-th largest \emph{singular value} of the matrix $L$, i.e.~the $p$-th largest square root of an eigenvalue of the matrix $L^*L$. In the proof of Theorem~\ref{thm:takens}, instead of Lemma~\ref{lem: key_ineq_linear} we will use the following lemma, proved as \cite[Lemma~4.2]{SYC91} (see also \cite[Lemma 14.3]{Rob11}).

\begin{lem}\label{lem: key_ineq_inter}
Let $L \colon  \R^m \to \R^k$ be a linear transformation. Assume that $\sigma_p(L) > 0$ for some $p \in \{1, \ldots, k\}$. Then for every $z \in \R^k$ and $\rho, \eps > 0$,
\[ \frac{\Leb(\{ \alpha \in B_m(0, \rho) : \|L \alpha + z \| \leq \eps \})}{\Leb (B_m(0, \rho))} \leq C_{m,k} \Big(\frac{\eps}{\sigma_p(L) \, \rho}\Big)^p, \]
where $C_{m,k} > 0$ is a constant depending only on $m,k$ and $\Leb$ is the Lebesgue measure on $\R^m$.
\end{lem}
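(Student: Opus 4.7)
The plan is to invoke the singular value decomposition of $L$ to reduce to a diagonal case, and then apply Fubini. Write $L = U\Sigma V^*$, where $U$ is a $k \times k$ orthogonal matrix, $V$ is an $m \times m$ orthogonal matrix, and $\Sigma$ is the $k \times m$ rectangular diagonal matrix carrying $\sigma_j(L)$ on its diagonal. Since $U$ is an isometry of $\R^k$, we have $\|L\alpha + z\| = \|\Sigma V^*\alpha + U^*z\|$. The substitution $\beta = V^*\alpha$ is a rotation of $\R^m$, so it preserves Lebesgue measure and maps $B_m(0,\rho)$ onto itself. The problem therefore reduces to bounding the Lebesgue measure of
\[
A \;:=\; \bigl\{\beta \in B_m(0,\rho) : \|\Sigma \beta + z'\| \leq \eps\bigr\}, \qquad z' := U^*z.
\]

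Next I would extract coordinatewise constraints. If $\beta \in A$, then for every $j \in \{1,\ldots,p\}$ (where by hypothesis $\sigma_j(L) \geq \sigma_p(L) > 0$) one has $|\sigma_j(L)\beta_j + z'_j| \leq \eps$, which forces $\beta_j$ into an interval of length at most $2\eps/\sigma_j(L) \leq 2\eps/\sigma_p(L)$. Hence $A$ lies inside the intersection of $B_m(0,\rho)$ with a product of $p$ such intervals along the first $p$ coordinate axes. By Fubini, this intersection has Lebesgue measure at most $(2\eps/\sigma_p(L))^p$ times the $(m-p)$-dimensional Lebesgue measure of the orthogonal projection of $B_m(0,\rho)$ onto the remaining $m-p$ coordinates, which in turn is contained in the ball $B_{m-p}(0,\rho)$.

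Dividing through by $\Leb(B_m(0,\rho)) = c_m\rho^m$ and using $\Leb(B_{m-p}(0,\rho)) = c_{m-p}\rho^{m-p}$ yields the claimed bound with $C_{m,k} = 2^p c_{m-p}/c_m$, a constant depending only on $m$ and $p \leq k$. The hypothesis $\sigma_p(L) > 0$ automatically forces $p \leq \min(m,k)$, so $m-p \geq 0$.

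I do not anticipate any serious obstacle: the singular value decomposition does the heavy lifting by reducing the problem to a diagonal matrix, after which the estimate is elementary product geometry. The only point requiring brief care is the case $k > m$, where the last $k-m$ rows of $\Sigma$ vanish and the constraint $\|\Sigma\beta + z'\|\leq\eps$ also demands $|z'_j|\leq\eps$ for $j > m$; these conditions are independent of $\beta$, so either they hold and the argument proceeds unchanged, or they fail and $A$ is empty.
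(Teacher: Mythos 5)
Your proof is correct, and it is essentially the argument behind the result the paper merely cites (\cite[Lemma~4.2]{SYC91}, \cite[Lemma~14.3]{Rob11}): reduce by the singular value decomposition to a rectangular diagonal matrix, confine each of the first $p$ coordinates to an interval of length $2\eps/\sigma_p(L)$, and conclude by Fubini. The only cosmetic point is that your constant $2^p c_{m-p}/c_m$ depends on $p$; since $p$ ranges over the finite set $\{1,\ldots,\min(m,k)\}$, taking the maximum over $p$ gives a constant depending only on $m$ and $k$, as required.
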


To verify the measurability of the sets occurring in subsequent proofs, we will use the two following elementary lemmas. A measure $\mu$ on a set $X$ is called \emph{$\sigma$-finite} if there exists a countable collection of measurable sets $A_n,\ n \in \N$ such that $\mu(A_n)<\infty$ for each $n \in \N$ and $\bigcup \limits_{n=1}^{\infty}A_n = X$. Recall that a \emph{$\sigma$-compact} set is a countable union of compact sets.

\begin{lem}\label{lem:dimh_fsigma}
	Let $X\subset \R^N$ be a Borel set and let $\mu$ be a Borel $\sigma$-finite  measure on $X$. Then there exists a $\sigma$-compact set $K \subset X$ of full $\mu$-measure.
\end{lem}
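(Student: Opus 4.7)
The statement is a standard measure-theoretic fact and the argument is essentially an application of inner regularity of finite Borel measures on Polish spaces combined with $\sigma$-finiteness. I would proceed in two steps.

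First, by $\sigma$-finiteness of $\mu$, I would fix a countable family of Borel sets $A_n\subset X$ with $\mu(A_n)<\infty$ and $X=\bigcup_{n=1}^\infty A_n$. Since $X$ is Borel in $\R^N$, every Borel subset of $X$ is Borel in $\R^N$, so each restriction $\mu_n=\mu|_{A_n}$ extends to a finite Borel measure on the Polish space $\R^N$. At this point I would invoke the classical theorem that every finite Borel measure on a Polish space is inner regular (tight): for every Borel set $B\subset\R^N$ and every $\eps>0$, there exists a compact set $C\subset B$ with $\mu_n(B\setminus C)<\eps$. See e.g.\ the standard reference treatment of Radon measures on Polish spaces.

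Applying this to $B=A_n$ with $\eps=1/j$, I obtain for every $n,j\in\N$ a compact set $K_{n,j}\subset A_n$ with $\mu(A_n\setminus K_{n,j})<1/j$. Setting
\[
K_n=\bigcup_{j=1}^\infty K_{n,j}\quad\text{and}\quad K=\bigcup_{n=1}^\infty K_n,
\]
the set $K$ is a countable union of compact sets, hence $\sigma$-compact, and $K\subset X$. Moreover, for each $n$ we have $\mu(A_n\setminus K_n)\le\mu(A_n\setminus K_{n,j})<1/j$ for every $j$, so $\mu(A_n\setminus K_n)=0$. Since $X\setminus K\subset\bigcup_{n=1}^\infty(A_n\setminus K_n)$, countable subadditivity gives $\mu(X\setminus K)=0$, i.e.\ $K$ has full $\mu$-measure, as required.

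There is essentially no obstacle here beyond quoting inner regularity; the only mild care needed is to notice that a Borel measure on a Borel subset $X\subset\R^N$ can be viewed as a Borel measure on $\R^N$ concentrated on $X$, so that the Polish-space inner regularity theorem applies directly to the finite measures $\mu|_{A_n}$.
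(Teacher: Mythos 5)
Your proof is correct and follows essentially the same route as the paper, which simply cites the regularity of $\sigma$-finite Borel measures on Euclidean space (Billingsley, Theorem 1.1) and leaves the routine decomposition into sets of finite measure and the countable union of compact approximants implicit. Your write-up just makes those standard steps explicit.
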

\begin{proof} Follows directly from the fact that a $\sigma$-finite Borel measure in a Euclidean space is regular (see e.g.~\cite[Theorem 1.1]{Billingsley99}).
\end{proof}

\begin{lem}\label{lem:measurability}
	Let $\mX, \mZ$ be metric spaces. Then the following hold. 
	\begin{itemize}
		\item[(a)] If $K \subset \mX \times \mZ$ is $\sigma$-compact, then so is $\pi_\mX(K)$, where $\pi_\mX\colon  \mX \times \mZ \to \mX$ is the projection given by $\pi_\mX(x,z)=x$. In particular, $\pi_\mX(K)$ is Borel.
		\item[(b)] If $\mX$ is $\sigma$-compact, $F\colon  \mX \to \mZ$ is continuous and $K \subset \mZ$ is $\sigma$-compact, then $F^{-1}(K)$ is $\sigma$-compact, hence Borel.
		\item[(c)]
		If $\mX,\mZ$ are $\sigma$-compact, $F\colon  \mX \times \mZ \to \R^k$, $k \in \N$, is continuous and $K \subset \mX$ is $\sigma$-compact, then the set 
		\[ \{ (x, z) \in \mX \times \mZ : F(x,z) = F(y,z) \text{ for some } y \in K \setminus\{ x \} \} \]
		is $\sigma$-compact and hence Borel.
	\end{itemize}
\end{lem}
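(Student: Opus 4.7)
For part (a), the plan is simply to decompose $K = \bigcup_{n=1}^\infty K_n$ with each $K_n$ compact, note that $\pi_\mX$ is continuous and hence maps each $K_n$ to a compact set $\pi_\mX(K_n) \subset \mX$, and write $\pi_\mX(K) = \bigcup_n \pi_\mX(K_n)$. Since any $\sigma$-compact subset of a metric space is $F_\sigma$, it is Borel.

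For part (b), I would write $\mX = \bigcup_n X_n$ and $K = \bigcup_m K_m$ with $X_n, K_m$ compact. Each $K_m$ is closed in $\mZ$ (a compact subset of a metric space is closed), so by continuity $F^{-1}(K_m)$ is closed in $\mX$, and therefore $F^{-1}(K_m) \cap X_n$ is a closed subset of the compact set $X_n$, hence compact. Then $F^{-1}(K) = \bigcup_{n,m}(F^{-1}(K_m) \cap X_n)$ is $\sigma$-compact.

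For part (c), the plan is to realize the set in question as the image under the projection $\pi_{\mX \times \mZ}$ of the auxiliary set
\[
A = \{(x,z,y) \in \mX \times \mZ \times \mX : y \in K,\ y \neq x,\ F(x,z) = F(y,z)\},
\]
and then apply part (a) to conclude. The main obstacle is that the condition $y \neq x$ is open rather than closed, so $A$ is not immediately closed inside the ambient $\sigma$-compact space $\mX \times \mZ \times \mX$. I will circumvent this by fragmenting along the metric $d_\mX$ on $\mX$: writing $\mX = \bigcup_n X_n$, $\mZ = \bigcup_m Z_m$, $K = \bigcup_l K_l$ (all unions of compacts), and using $\{(x,y) : y \neq x\} = \bigcup_j \{(x,y) : d_\mX(x,y) \geq 1/j\}$, I decompose
\[
A = \bigcup_{n,m,l,j} A_{n,m,l,j},
\]
where
\[
A_{n,m,l,j} = \{(x,z,y) \in X_n \times Z_m \times K_l : F(x,z) = F(y,z),\ d_\mX(x,y) \geq 1/j\}.
\]
Each $A_{n,m,l,j}$ is cut out of the compact set $X_n \times Z_m \times K_l$ by closed conditions (continuity of $F$ and of $d_\mX$), so it is compact. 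Hence $A$ is $\sigma$-compact, and its projection to $\mX \times \mZ$ is $\sigma$-compact by part (a), which is the set whose measurability we needed.
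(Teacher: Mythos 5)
Your proposal is correct and follows essentially the same route as the paper: parts (a) and (b) are handled identically, and for (c) the paper likewise writes the set as a projection of an auxiliary set and handles the open condition $y\neq x$ by decomposing into the countable union of pieces where $d(x,y)\geq 1/n$, each of which is compact by continuity of $F$ and $d$.
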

\begin{proof}
	The statement (a) follows from the fact that $\pi_\mX$ is continuous, and a continuous image of a compact set is also compact. To show (b),
	it is enough to notice that $F^{-1}(K)$ is a countable union of closed subsets of a $\sigma$-compact space. To check (c), let $\pi_{\mX \times \mZ}\colon  \mX \times K \times \mZ \to \mX \times \mZ$ be the projection $\pi_{\mX \times \mZ}(x,y,z) = (x,z)$. Then
	\begin{align*}
	&\{ (x, z) \in \mX \times \mZ : F(x,z) = F(y,z) \text{ for some } y \in K \setminus\{ x \} \} \\
		&= \pi_{\mX \times \mZ}\big(\{ (x, y, z) \in \mX \times K \times \mZ : F(x,z) = F(y,z), \: d(x,y) \neq 0 \}\big) \\
		& = \bigcup \limits_{n = 1}^{\infty} \pi_{\mX \times \mZ}\big(\{ (x, y, z) \in \mX \times K \times \mZ : F(x,z) = F(y,z), \: d(x,y) \geq \frac{1}{n} \}\big),
	\end{align*}
	where $d$ is the metric in $\mX$. Since $d$ is continuous, we can use (a) and (b) to end the proof.
\end{proof}

\section{Probabilistic embedding theorem}\label{sec:embedding}

In this section we prove an extended version of the probabilistic embedding theorem, formulated below. Obviously, Theorem~\ref{thm:embed_simple} follows from Theorem \ref{thm:embed}

\begin{thm}[{\bf Probabilistic embedding theorem -- extended version}{}] \label{thm:embed}
Let $X \subset \R^N$ be a Borel set and $\mu$ be a Borel $\sigma$-finite measure on $X$. Take $k \in \N$ and $\beta \in (0,1]$ such that $\mH^{\beta k}(X) = 0$ and let $\phi \colon  X \to \R^k$ be a locally $\beta$-H\"older map. Then for Lebesgue almost every linear transformation $L\colon  \R^N \to \R^k$ there exists a Borel set $X_L \subset X$ of full $\mu$-measure, such that the map $\phi_L = \phi + L$ is injective on $X_L$.
\end{thm}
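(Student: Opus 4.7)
The plan is to show, for each fixed $\delta > 0$, that the set
\[
B_{L,\delta} = \{x \in X : \exists\, y \in X,\ \|x-y\| \geq \delta,\ \phi_L(x) = \phi_L(y)\}
\]
has $\mu$-measure zero for $\eta_{N,k}$-a.e.~$L \in E^N_k$, and then take a countable union over $\delta = 1/n$, $n \in \N$, to cover the whole bad set $\{x : \exists\, y \neq x,\ \phi_L(x) = \phi_L(y)\}$. It is enough to argue within $E^N_k$, since rescaling $L \mapsto cL$ (with $\phi$ rescaled accordingly) for $c \in \N$ sweeps out Lebesgue-a.e.~$L \in \Lin(\R^N; \R^k)$.

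I would first carry out the standard reductions: write $\mu$ as a countable sum of finite measures, use Lemma \ref{lem:dimh_fsigma} to assume $X$ is $\sigma$-compact, and then combine the local H\"{o}lder assumption with a Lebesgue-number argument on the compact pieces to reduce to the setting where $X$ is compact and there exist $\delta^{*}, C > 0$ such that $\phi$ is $\beta$-H\"{o}lder with constant $C$ on every subset of $X$ of diameter less than $\delta^{*}$. Each such reduction only affects the conclusion on a countable intersection of full-measure families of $L$'s. Measurability of the slice $\{(x, L) : x \in B_{L,\delta}\}$ follows from Lemma \ref{lem:measurability}(c) applied to the continuous map $F(x, L) = \phi(x) + Lx$, so Tonelli's theorem applies.

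The core estimate is
\[
\int_{E^N_k} \mu(B_{L,\delta})\, d\eta_{N,k}(L) = \int_X \eta_{N,k}\bigl(\{L : x \in B_{L,\delta}\}\bigr)\, d\mu(x).
\]
Given $\eps > 0$, invoke $\mH^{\beta k}(X) = 0$ to cover $X$ by sets $\{U_j\}$ with $|U_j| < \min(\delta/2, \delta^{*}, 1)$ and $\sum_j |U_j|^{\beta k} < \eps$. Fix $x \in X$. Because $|U_j| < \delta/2$, the only $U_j$ that can contain a $y \in X$ with $\|x - y\| \geq \delta$ are those with $\dist(x, U_j) > \delta/2$; for each such $j$, pick $z_j \in X \cap U_j$. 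If $\phi_L(x) = \phi_L(y)$ for some $y \in X \cap U_j$, then \eqref{eq:E} together with the H\"{o}lder bound yields
\[
\|L(x-z_j) + \phi(x) - \phi(z_j)\| \leq \sqrt{N}\,|U_j| + C\,|U_j|^{\beta} \leq (\sqrt{N} + C)\,|U_j|^{\beta}.
\]
Lemma \ref{lem: key_ineq_linear} then bounds the $\eta_{N,k}$-measure of the corresponding $L$'s by a constant $C_{1}$ times $|U_j|^{\beta k}/\delta^{k}$ (using $\|x - z_j\| > \delta/2$). Summing over the relevant $j$'s gives $\eta_{N,k}(\{L : x \in B_{L,\delta}\}) \leq C_{1}\,\eps/\delta^{k}$ uniformly in $x$, hence the whole integral is at most $C_{1}\,\eps\,\mu(X)/\delta^{k}$; sending $\eps \to 0$ completes the proof.

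The main subtlety I anticipate is the summation step: a naive bound via a double sum over pairs $(U_i, U_j)$ with $x \in U_i$, $y \in U_j$ would introduce an uncontrolled factor equal to the total number of covering sets. The key trick is to fix $x$ first via Tonelli, replacing the double sum by the single sum $\sum_j |U_j|^{\beta k} < \eps$, which is precisely the Hausdorff-content bound. This is the mechanism by which the probabilistic framework lets us work with $\mH^{\beta k}(X) = 0$ rather than requiring a uniform box-counting estimate, halving the required number of coordinates relative to Sauer--Yorke--Casdagli.
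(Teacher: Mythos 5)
Your proposal is correct and follows essentially the same route as the paper: reduce to $E^N_k$ by rescaling, use Fubini/Tonelli to fix $x$ first, cover the far-away $y$'s by sets of small diameter whose $\beta k$-powers sum to at most $\eps$ (which is exactly what $\mH^{\beta k}(X)=0$ provides), apply Lemma~\ref{lem: key_ineq_linear} to a representative point of each covering set, and sum. The only cosmetic differences are that the paper keeps a countable family of H\"older constants $C_j$ attached to a fixed open cover instead of extracting a uniform constant via a Lebesgue-number argument, and separates the scales as $K_n=\{y:\|x-y\|\ge 1/n\}$ rather than your $\delta=1/n$; the key mechanism you identify in your last paragraph is precisely the one the paper uses.
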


\begin{rem} It is straightforward to notice that if $\dim_H X = 0$, then $\phi$ can be taken to be an arbitrary H\"older map. 
\end{rem}

\begin{proof}[Proof of Theorem~\rm\ref{thm:embed}]

Note first that it sufficient to prove that the set $X_L$ exists for $\eta_{N,k}$-almost every $L \in E^N_k$. Indeed, if this is shown, then for a given locally $\beta$-H\"older map $\phi \colon  X \to \R^k$ we can take sets $\mathcal L_j \subset E^N_k$, $j \in \N$, such that $\eta_{N,k}(\mathcal L_j) = 1$ and for every $\tilde L \in \mathcal L_j$ the map $(\phi/j)_{\tilde L} = \phi/j + \tilde L$ is injective on a Borel set $X_{\tilde L}^{(j)} \subset X$ of full $\mu$-measure. Then the set $\mathcal L = \bigcup_{j\in \N} \{j\tilde L: \tilde L \in \mathcal L_j\} \subset \Lin(\mathbb R^N; \mathbb R^k)$ has full Lebesgue measure and for every $L \in \mathcal L$ there exists $j$ such that $L/j \in \mathcal L_j$, so $(\phi/j)_{L/j} = (\phi + L)/j$ (and hence $\phi_L$) is injective on $X_L = \bigcap_{j\in\N} X_{L/j}^{(j)}$, which has full $\mu$-measure. 

By Lemma~\ref{lem:dimh_fsigma}, we can assume that $X$ is $\sigma$-compact. Take $k \in \N$, $\beta \in (0,1]$ with $\mH^{\beta k}(X) = 0$ and a locally $\beta$-H\"older map $\phi \colon X \to \R^k$. Set
\[ A = \{ (x, L) \in X \times E^N_k : \phi_L (x) = \phi_L (y) \text{ for some } y \in X \setminus \{ x \}\}. \]
By Lemma~\ref{lem:measurability}, $A$ is Borel.
For $x \in X$ and $L \in E^N_k$, denote by $A_x$ and $A^L$, respectively, the sections
\[ A_x = \{ L \in E^N_k : (x, L) \in A \},\quad A^L = \{ x \in X : (x, L) \in A \}. \]
The sets $A_x$ and $A^L$ are Borel as sections of a Borel set. Observe first that in order to prove the theorem it is enough to show $\eta_{N,k}(A_x) = 0$ for every $x \in X$, since then by Fubini's theorem (\cite[Thm. 8.8]{R87}), $(\eta_{N,k} \otimes \mu) (A) = 0$ and, consequently, $\mu(A^L) = 0$ for $\eta_{N,k}$-almost every $L \in E_k^N$. Since $\phi_L$ is injective on $X \setminus A^L$, the assertion of the theorem is true.

Take a point $x \in X$. Since $\phi$ is locally $\beta$-H\"older and $X$ is separable, there exists a countable covering of $X$ by open sets $U_j \subset \R^N$, $j \in \N$, such that 
\begin{equation}\label{eq:holder}
\|\phi(y) - \phi(y')\| \le C_j \|y - y'\|^\beta \qquad \text{for every} \quad y,y' \in U_j \cap X 
\end{equation}
for some $C_j > 0$. Let 
\[
K_{n} = \Big\{ y \in X : \frac{1}{n} \leq \|x-y\| \Big\}.
\]
To show $\eta_{N,k}(A_x) = 0$, it suffices to prove $\eta_{N,k}(A_{x, j, n}) = 0$ for every $j, n \in \N$, where
\[
A_{x, j, n} = \{ L \in E^N_k : \phi_L (x) = \phi_L (y) \text{ for some } y \in U_j \cap K_{n}\}.
\]
Note that by Lemma~\ref{lem:measurability}, the set $A_{x, j, n}$ is Borel.

Take $j, n \in \N$ and fix a small $\eps > 0$. Since $ \mH^{\beta k}(U_j \cap K_{n}) \leq \mH^{\beta k} (X) = 0$, there exists a collection of balls $B_N(y_i, \eps_i)$, $i \in \N$, for some $y_i \in U_j \cap K_{n}$ and $\eps_i > 0$, such that
\begin{equation}\label{e:cover} U_j \cap K_{n} \subset \bigcup \limits_{i \in \N} B_N(y_i, \eps_i)\quad \text{and} \quad  \sum \limits_{i = 1}^\infty \eps_i^{\beta k} \leq \eps.
\end{equation}
Take $L \in A_{x, j, n}$ and $y \in U_j \cap K_{n}$ such that $\phi_L (x) = \phi_L (y)$. Then $y \in B_N (y_i, \eps_i)$ for some $i \in \N$ and
\begin{align*}
\| L(y_i - x) + \phi(y_i) - \phi(x) \| &= \| \phi_L(y_i) - \phi_L(x)\|\\
&= \| \phi_L(y_i) - \phi_L(y)\|\\
& \leq \|\phi(y_i) - \phi(y)\| + \| L(y_i-y)\|\\
&\leq C_j \|y_i-y\|^\beta + \sqrt{N} \|y_i - y\|\\
&\leq M_j \eps_i^\beta
\end{align*}
for some $M_j >0$, by \eqref{eq:E} and \eqref{eq:holder}. This shows that
\[ A_{x, j, n} \subset \bigcup \limits_{i \in \N} \{ L \in E^N_k : \| L(y_i - x) + \phi(y_i) - \phi(x) \| \leq M_j\eps_i^\beta \}. \]
By Lemma~\ref{lem: key_ineq_linear}, (\ref{e:cover}) and the fact $y_i \in K_n$, we have
\begin{align*}
	\eta_{N,k}(A_{x, j, n}) &\leq \sum \limits_{i = 1}^\infty \eta_{N,k}(\{ L \in E^N_k : \| L(y_i - x) + \phi(y_i) - \phi(x) \| \leq M_j\eps_i^\beta \})\\
	&\leq \frac{CN^{k/2}M_j^k}{1/n^k}\sum \limits_{i = 1}^\infty \eps_i^{\beta k} \leq  CN^{k/2}M_j^k n^k\eps.
\end{align*}
Since  $\eps > 0$ was arbitrary, we obtain $\eta_{N,k}(A_{x, j, n}) = 0$, which ends the proof.
\end{proof}

\begin{rem}
	Note that the assumption $\mH^{\beta k}(X) = 0$ is fulfilled if $\hdim X < \beta k$, so Theorem~\ref{thm:embed} is indeed a Hausdorff dimension embedding theorem. Moreover, it may happen that $\mH^{\beta k}(X) = 0$ and $\hdim X = \beta k$.
\end{rem}

As a simple consequence of Theorem~\ref{thm:embed}, we obtain the following corollary, formulated in a slightly simplified version in Section~\ref{sec:intro} as Corollary~\ref{cor:embed-proj_simple}. 

\begin{cor}[{\bf Probabilistic injective projection theorem -- extended version}{}]\label{cor:embed-proj}
Let $X \subset \R^N$ be a Borel set and let $\mu$ be a Borel $\sigma$-finite measure on $X$. Then for every $k \in \N,\ k \leq N$ such that $\mH^k(X) =0$ and almost every $k$-dimensional linear subspace $S \subset \R^N$ $($with respect to the standard measure on the Grassmannian $\Gr(k, N))$, the orthogonal projection of $X$ into $S$ is injective on a full $\mu$-measure subset of $X$ $($depending on $S)$. 
\end{cor}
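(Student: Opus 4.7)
The plan is to deduce Corollary~\ref{cor:embed-proj} from Theorem~\ref{thm:embed} by transferring a Lebesgue-generic statement on $\Lin(\R^N;\R^k)$ to a Haar-generic statement on $\Gr(k,N)$, via the map sending a linear transformation to the orthogonal complement of its kernel.

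First, apply Theorem~\ref{thm:embed} to the trivial observable $\phi\equiv 0$ with $\beta=1$: a constant function is locally Lipschitz, and the hypothesis $\mH^{\beta k}(X)=\mH^k(X)=0$ is exactly the one assumed here. This produces a set $\mathcal{L}\subset\Lin(\R^N;\R^k)$ of full Lebesgue measure such that every $L\in\mathcal{L}$ admits a Borel set $X_L\subset X$ of full $\mu$-measure on which $L$ is injective. Intersect $\mathcal{L}$ with the open set $\mathcal{R}\subset\Lin(\R^N;\R^k)$ of surjective (rank $k$) linear maps; since its complement is an algebraic subvariety of positive codimension, $\mathcal{R}$ has full Lebesgue measure, and so does $\mathcal{L}':=\mathcal{L}\cap\mathcal{R}$. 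For any $L\in\mathcal{R}$, the equality $L(x)=L(y)$ holds if and only if $x-y\in\ker L$, if and only if $P_{S(L)}(x)=P_{S(L)}(y)$, where $S(L):=(\ker L)^\perp\in\Gr(k,N)$ and $P_{S(L)}$ is the orthogonal projection onto $S(L)$. Hence, for every $L\in\mathcal{L}'$, the projection $P_{S(L)}$ is injective on the very same set $X_L$.

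The only substantive step is then to show that the image $\pi(\mathcal{L}')$, where $\pi\colon\mathcal{R}\to\Gr(k,N)$ is defined by $\pi(L)=S(L)$, has full measure with respect to the standard rotation-invariant measure on $\Gr(k,N)$. Consider the right action of $O(N)$ on $\Lin(\R^N;\R^k)$ given by $L\mapsto L\circ O$; it preserves the Lebesgue measure, because in the identification $\Lin(\R^N;\R^k)\cong(\R^N)^k$ it acts on each factor by the orthogonal transformation $O^T$. A direct computation gives $\pi(L\circ O)=O^T\pi(L)$, so $\pi$ intertwines this action with the standard $O(N)$-action on $\Gr(k,N)$. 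Restricting Lebesgue measure to any bounded open subset of $\mathcal{R}$ and pushing forward by $\pi$ thus yields an $O(N)$-invariant Borel measure on $\Gr(k,N)$ which, by uniqueness of the rotation-invariant measure up to scaling, is absolutely continuous with respect to the Grassmannian measure. Consequently, $\pi$ maps Lebesgue-null sets of $\mathcal{R}$ to null sets on the Grassmannian, so $\pi(\mathcal{L}')$ has full measure. For every $S$ in this full-measure set we may pick some $L\in\mathcal{L}'$ with $\pi(L)=S$, and then $X_S:=X_L$ is a Borel subset of $X$ of full $\mu$-measure on which $P_S$ is injective.

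The main (and really only) obstacle is the Fubini/equivariance identification of the pushforward $\pi_*\Leb|_{\mathcal{R}}$ with the Grassmannian measure; once this is in place, everything reduces to Theorem~\ref{thm:embed} and the elementary fact that injectivity of a surjective linear map depends only on its kernel.
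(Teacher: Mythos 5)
Your proposal is correct and follows essentially the same route as the paper: apply Theorem~\ref{thm:embed} with $\phi\equiv 0$, note that injectivity of a rank-$k$ map $L$ on a set is equivalent to injectivity of the orthogonal projection onto $(\ker L)^\perp=\Span(l_1,\ldots,l_k)$ there, and transfer the Lebesgue-a.e.\ statement on $\Lin(\R^N;\R^k)$ to an a.e.\ statement on $\Gr(k,N)$ (a step the paper asserts without the equivariance argument you spell out). The one small correction: the bounded open subset of $\mathcal{R}$ whose restricted Lebesgue measure you push forward must be chosen $O(N)$-invariant (e.g.\ $E^N_k\cap\mathcal{R}$) for the pushforward to be rotation-invariant, which is an easy fix.
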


\begin{proof}[Proof of Corollary~\rm\ref{cor:embed-proj}]
Apply Theorem~\ref{thm:embed} for the map $\phi \equiv 0$. Then we know that a linear map $L \in \Lin(\R^N; \R^k)$ of the form \eqref{eq:L} is injective on a set $X_L \subset X$ of full $\mu$-measure for Lebesgue almost every $(l_1, \ldots, l_k) \in (\R^N)^k$. We can assume that $l_1, \ldots, l_k$ are linearly independent for all such $L$, which also implies that the same holds for $Ll_1, \ldots, Ll_k$. Setting 
\[
S_L = \Span (l_1, \ldots, l_k)
\]
and taking $V_L \in \Lin(\R^k; \R^N)$ defined by $V_L(Ll_j) = l_j$ for $j = 1, \ldots, k$, we have
\[
V_L \circ L = \Pi_{S_L},
\]
where $\Pi_{S_L}$ is the orthogonal projection from $\R^N$ onto $S_L$ and $V_L$ is injective. It follows that $\Pi_{S_L}$ is injective on $X_L$ for almost every $(l_1, \ldots, l_k)$, so $\Pi_{S}$ is injective on a full $\mu$-measure subset of $X$ for almost every $k$-dimensional linear subspace $S \subset \R^N$. 
\end{proof}

Let us note that in general, the requirement $\mH^{\beta k}(X) = 0$ in Theorem \ref{thm:embed} cannot be replaced by the weaker condition $\hdim(X) \leq \beta k$.

\begin{examplex}\label{ex:circle}
Let $k=\beta=1$, $X = \mathbb{S}^1 \subset \R^2$ be the unit circle and let $\mu$ be the normalized Lebesgue measure on $\mathbb{S}^1$. We shall prove that there is no Lipschitz transformation $\phi \colon  \mathbb{S}^1 \to \R$ which is injective on a set of full $\mu$-measure. Let $\phi$ be such a transformation. Then $\phi(\mathbb{S}^1)=[a, b]$ for some compact interval. As $\phi$ is injective on a set of full measure, the interval $[a,b]$ is non-degenerate, i.e. $a < b$. Fix points $x, y \in \mathbb{S}^1$ with $\phi(x) = a, \phi(y) = b$. As $x \neq y$, there are exactly two open arcs $I, J \subset \mathbb{S}^1$ of positive measure joining $x$ and $y$ such that $\overline{I} \cap \overline{J} = \{ x,y\}$ and $\overline{I} \cup \overline{J} = \mathbb{S}^1$. Clearly $\phi(\overline{I}) = \phi(\overline{J}) = [a,b]$. Let $A \subset \mathbb{S}^1$ be a Borel set such that $\phi$ is injective on $A$ and $\mu(A)$=1. As Lipschitz maps transform sets of zero Lebesgue measure to sets of zero Lebesgue measure, we conclude that $\phi(I \cap A)$ and $\phi(J \cap A)$ are disjoint Lebesgue measurable subsets of $[a,b]$ with Lebesgue measure equal to $b-a$. This contradiction shows that no Lipschitz transformation $\phi \colon  \mathbb{S}^1 \to \R$ is injecitive on a full measure set. 
\end{examplex}

Theorem~\ref{thm:embed} strengthens the following embedding theorem, proved recently by Alberti, B\"{o}lcskei, De Lellis, Koliander and Riegler in \cite{Riegler18}. 

\begin{thm}[{\cite[Theorem II.1]{Riegler18}}]\label{thm:riegler}
	Let $\mu$ be a Borel probability measure in $\R^N$ and let $k \in \N$ be such that $k> \lmodbdim \mu$. Then for Lebesgue almost every linear transformation $L \colon  \R^N \to \R^k$ there exists a Borel set $X_L \subset \R^N$ such that $\mu(X_L)=1$ and $L$ is injective on $X_L$.
\end{thm}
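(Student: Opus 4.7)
\medskip
\noindent
\textbf{Proof proposal.} The plan is to deduce Theorem~\ref{thm:riegler} directly from Theorem~\ref{thm:embed} applied with the zero observable $\phi \equiv 0$. The main point is that the hypothesis $k > \lmodbdim \mu$ allows one to throw away a $\mu$-null set and work on a Borel set of full $\mu$-measure whose $k$-dimensional Hausdorff measure vanishes.

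First I would unpack the definition of $\lmodbdim \mu$: since $\lmodbdim \mu < k$, there exists a Borel set $Y \subset \R^N$ with $\mu(Y) = 1$ and $\lmodbdim Y < k$. By definition of the lower modified box-counting dimension of a set, $Y$ admits a countable cover $Y \subset \bigcup_{i=1}^\infty K_i$ by compact sets $K_i \subset \R^N$ satisfying $\sup_i \ldim K_i < k$. In particular, for each $i$ we have
\[
\hdim K_i \leq \ldim K_i < k,
\]
using the general inequality $\hdim \leq \ldim$ for bounded sets, so $\mH^k(K_i) = 0$. Setting $X = \bigcup_{i=1}^\infty K_i$, we obtain an $F_\sigma$ (hence Borel) set with $\mu(X) = 1$ and, by countable subadditivity of Hausdorff measure,
\[
\mH^k(X) \leq \sum_{i=1}^\infty \mH^k(K_i) = 0.
\]

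Now I would apply Theorem~\ref{thm:embed} with this Borel set $X$, the finite Borel measure $\mu|_X$ (which is $\sigma$-finite, in fact a probability measure), the exponent $\beta = 1$, and the trivial map $\phi \equiv 0$, which is locally Lipschitz. The theorem yields that for Lebesgue almost every linear transformation $L \colon \R^N \to \R^k$ there is a Borel set $X_L \subset X$ of full $\mu|_X$-measure on which $\phi_L = L$ is injective. Since $\mu(\R^N \setminus X) = 0$, such $X_L$ has $\mu(X_L) = 1$, proving Theorem~\ref{thm:riegler}.

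I do not foresee any serious obstacle: the argument is essentially bookkeeping, and the substantive geometric content (the transversality estimate in Lemma~\ref{lem: key_ineq_linear} and the Fubini/covering scheme) is already packaged inside Theorem~\ref{thm:embed}. The only point requiring mild care is checking that the infimum defining $\lmodbdim \mu$ is realized (up to strict inequality) by some Borel set $Y$, which is immediate from $\lmodbdim \mu < k$ and the definition as an infimum, and the verification that reducing to the $F_\sigma$ subset $X \subset \R^N$ does not lose any $\mu$-mass.
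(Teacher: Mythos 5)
Your derivation is correct and is essentially the paper's own route: Theorem~\ref{thm:riegler} is quoted from \cite{Riegler18} without a standalone proof, and the paper's justification that Theorem~\ref{thm:embed} subsumes it is precisely the inequality $\hdim \mu \leq \lmodbdim \mu$ from \eqref{eq:hdim_mbdim_bdim}, whose content is exactly the chain $\hdim K_i \leq \ldim K_i$ over a countable compact cover that you spell out. The bookkeeping steps you flag (realizing the infimum defining $\lmodbdim \mu$ by a Borel set $Y$, passing to the full-measure $F_\sigma$ set $X$ with $\mH^k(X)=0$, and applying Theorem~\ref{thm:embed} with $\beta=1$ and $\phi\equiv 0$) are all handled correctly.
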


In fact, in \cite{Riegler18} the authors introduced the notion of $\lmodbdim \mu$, denoting it by $K(\mu)$ and calling it the \emph{description complexity} of the measure. In particular, Theorem~\ref{thm:riegler} holds for measures $\mu$ supported on a Borel set $X\subset \R^N$ with $\ldim X <k$. By \eqref{eq:hdim_mbdim_bdim}, we have $\hdim \mu \leq \lmodbdim \mu $, and in Section~\ref{sec:examples} we present an example (Theorem~\ref{thm:hdim<lmodbdim}) showing that the inequality may be strict. Therefore, Theorem~\ref{thm:embed} actually strengthens Theorem~\ref{thm:riegler}.

Non-probabilistic embedding theorems were first obtained in topological and smooth categories. The well-known Menger--N\"{o}beling embedding theorem (see e.g.~\cite[Theorem~V.2]{HW41}) states that for a compact metric space $X$ with Lebesgue covering dimension at most $k$, a generic continuous transformation $\phi \colon  X \to \R^{2k+1}$ is injective (and hence defines a homeomorphism between $X$ and $\phi(X)$). Genericity means here that the set of injective transformations $\phi \colon  X \to \R^{2k+1}$ is a dense $G_{\delta}$ subset of $C(X ; \R^{2k+1})$ endowed with the supremum metric. The dimension $2k+1$ is known to be optimal. 
The corresponding result in the category of smooth manifolds is the Whitney embedding theorem (see \cite{Whitney36}). It states that for a given $k$-dimensional $C^r$-manifold $M$, a generic $C^r$-transformation from $M$ to $\R^{2k+1}$ is a $C^r$-embedding (i.e.~an injective immersion of class $C^r$). 

Let us now compare Theorem~\ref{thm:embed} to non-probabilistic embedding theorems involving the box-counting dimension. One of the first results in this area was a theorem by Ma\~{n}\'{e} \cite[Lemma 1.1]{Mane81}. We present its formulation following \cite[Theorem 4.6]{SYC91} and \cite[Theorem 6.2]{Rob11} (originally, Ma\~{n}\'{e} proved that topologically generic linear transformation is injective on $X$).
\begin{thm}\label{thm:standard_embed}
	Let $X \subset \R^N$ be a compact set. Let $k \in N$ be such that $k>2\udim{X}$ $($it suffices to take $k > \hdim(X-X))$. Then Lebesgue almost every linear transformation $L\colon \R^N \to \R^k$ is injective on $X$.
\end{thm}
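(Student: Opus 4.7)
The plan is to adapt the covering argument used in the proof of Theorem~\ref{thm:embed} (with $\phi\equiv 0$) to the \emph{difference set} $X-X$. The starting observation is that $L\in\Lin(\R^N;\R^k)$ fails to be injective on $X$ precisely when $\Ker L$ meets $(X-X)\setminus\{0\}$. Since $(x,y)\mapsto x-y$ is Lipschitz and $\udim(X\times X)\le 2\udim X$, we get $\udim(X-X)\le 2\udim X$; hence the hypothesis $k>2\udim X$ implies $k>\udim(X-X)\ge\hdim(X-X)$, and I will work throughout with the weaker hypothesis $k>\hdim(X-X)$. In particular $\mH^k(X-X)=0$, and $X-X$ is compact since $X$ is.

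By the same scaling trick as in the proof of Theorem~\ref{thm:embed} (noting that injectivity of $L$ on $X$ is preserved under positive rescaling of $L$), it suffices to prove that the set of bad $L$'s has $\eta_{N,k}$-measure zero inside $E^N_k$. Unlike the probabilistic case, there is no Fubini reduction available in the base variable, because injectivity is required on all of $X$ rather than on a $\mu$-typical point. Instead, I will stratify the difference set by distance from the origin: for each $n\in\N$ put $D_n=\{z\in X-X:\|z\|\ge 1/n\}$, a compact set, and
\[
\mathcal{B}_n=\{L\in E^N_k:Lz=0 \text{ for some } z\in D_n\}.
\]
Then $\mathcal{B}_n$ is closed in $E^N_k$ (if $L_m\to L$ and $L_mz_m=0$ with $z_m\in D_n$, then a subsequence of $z_m$ converges to some $z\in D_n$ with $Lz=0$), and the set of bad $L$'s equals $\bigcup_n\mathcal{B}_n$. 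It is therefore enough to prove $\eta_{N,k}(\mathcal{B}_n)=0$ for every $n$.

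The core estimate is essentially the one used in Section~\ref{sec:embedding}. Fix $n$ and $\eps>0$; since $\mH^k(D_n)\le\mH^k(X-X)=0$, cover $D_n$ by balls $B_N(y_i,\eps_i)$, $y_i\in D_n$, with $\sum_i\eps_i^k\le\eps$. If $L\in\mathcal{B}_n$ has witness $z\in D_n\cap B_N(y_i,\eps_i)$, then by \eqref{eq:E},
\[
\|Ly_i\|=\|L(y_i-z)\|\le\sqrt{N}\,\eps_i,
\]
so $\mathcal{B}_n\subset\bigcup_i\{L\in E^N_k:\|Ly_i\|\le\sqrt{N}\,\eps_i\}$. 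Applying Lemma~\ref{lem: key_ineq_linear} (with $x=y_i$ and $z=0$) and using $\|y_i\|\ge 1/n$,
\[
\eta_{N,k}(\mathcal{B}_n)\le\sum_i CN^{k/2}\frac{(\sqrt{N}\,\eps_i)^k}{\|y_i\|^k}\le CN^k n^k\eps.
\]
Sending $\eps\to 0$ concludes the proof. The main conceptual obstacle, rather than a technical one, is precisely the absence of the measure $\mu$: without it the Fubini step of Theorem~\ref{thm:embed} collapses, and one has to absorb the ``both endpoints'' problem into the geometry of $X-X$. Once this replacement is made, the rest of the machinery (the estimate of Lemma~\ref{lem: key_ineq_linear}, the rescaling reduction to $E^N_k$, and the truncation by $D_n$) carries over unchanged.
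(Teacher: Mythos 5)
Your proof is correct and follows exactly the route the paper indicates for this classical result (which it does not prove itself but cites to Ma\~{n}\'{e}, Sauer--Yorke--Casdagli and Robinson): Remark~\ref{rem:hdim_does_not_work} explains that the standard argument hinges on $k>\hdim(X-X)$, and your adaptation of the covering estimate from the proof of Theorem~\ref{thm:embed} to the compact difference set $X-X$ --- with the truncation by the sets $D_n$ replacing the Fubini step --- is precisely that argument. The measurability of $\mathcal{B}_n$, the rescaling reduction to $E^N_k$, and the application of Lemma~\ref{lem: key_ineq_linear} are all handled correctly, so I see no gaps.
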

\begin{rem}\label{rem:hdim_does_not_work}
	As noticed by Ma\~{n}\'{e} and communicated in \cite[p.~627]{ER85}, his original statement in \cite{Mane81} is incorrect. Namely, he assumed $k>2\hdim X+1$ instead of $k > \hdim(X-X)$. However, this is known to be insufficient for the existence of a linear embedding of $X$ into $\R^k$. In fact, in \cite[Appendix A]{SYC91}, Kan presented an example of a set $X \subset \R^m$ with $\hdim X =0$, such that any linear transformation $L \colon  \R^m \to \R^{m-1}$ fails to be injective on $X$. It turns out that the assumption $k > 2\hdim X$ is insufficient, while $k > 2\udim X$ is sufficient. This stems from the fact that the proof of Theorem~\ref{thm:standard_embed} actually requires the property $k > \hdim(X-X)$, and the upper box-counting dimension satisfies
	\begin{equation}\label{e:udim_product}
	\udim(A\times B) \leq \udim(A) + \udim(B),
	\end{equation}
	for $A, B \subset \R^N$, hence
	\[\hdim(X - X) \leq \hdim(X \times X) \leq \udim(X \times X) \leq 2\udim X\]
	(note that this calculation shows that $k>2\udim{X}$  is a stronger assumption than $k > \hdim(X-X)$). On the other hand, (\ref{e:udim_product}) does not hold for the Hausdorff dimension (nor for the lower box-counting dimension), and $\hdim X$ does not control $\hdim(X-X)$. The fact that in Theorem~\ref{thm:embed} we can work with the Hausdorff dimension comes from the application of Fubini's theorem, which enables us to consider covers of the set $X$ itself, instead of $X-X$. In Section \ref{sec:examples} we analyze Kan's example from the point of view of Theorem~\ref{thm:embed}.
\end{rem}
Theorem~\ref{thm:standard_embed} is also true for subsets of an arbitrary Banach space $\mathfrak{B}$ for a prevalent set of linear transformations $L\colon \mathfrak{B} \to \R^k$ (see \cite[Chapter 6]{Rob11} for details).

Note that the linear embedding from Theorem~\ref{thm:embed} need not preserve the dimension of $X$. Indeed, the Hausdorff and box-counting dimensions are invariants for bi-Lipschitz transformations, yet inverse of a linear map on a compact set does not have to be Lipschitz. Therefore, we only know that $\dim \phi_L(X) \leq \dim X$ (see \cite[Proposition 2.8.iv and Lemma 3.3.iv]{Rob11}) and the inequality can be strict. For example, let $\phi \equiv 0$ and $X = \{ (x, f(x)) : x \in [0,1] \}$ be a graph of a (H\"{o}lder continuous) function $f\colon [0,1]\to \R$ with $\hdim X >1$, e.g.~the Weierstrass non-differentiable function. Then the linear projection $L \colon  \R^2 \to \R$ given by $L(x,y)=x$ satisfies $1= \dim L(X) < \hdim X$. The following theorem shows that in the non-probabilistic setting, one can obtain $\beta$-H\"{o}lder continuity of the inverse map for small enough $\beta \in (0,1)$ (see \cite{BAEFN93, EFNT94, HK99} and \cite[Chapter 4]{Rob11}).
\begin{thm}
	Let $X \subset \R^N$ be a compact set. Let $k \in \N$ be such that $k>2\udim{X}$ and let $\beta$ be such that $0 < \beta < 1 - 2\udim X/k$. Then Lebesgue almost every linear transformation $L \colon  \R^N \to \R^k$ is injective on $X$ with $\beta$-H\"{o}lder continuous inverse.
\end{thm}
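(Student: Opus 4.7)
The plan is to bound the $\eta_{N,k}$-measure of the set of $L \in E^N_k$ failing the desired H\"older inverse property and show that it vanishes; the reduction from arbitrary $L \in \Lin(\R^N;\R^k)$ to $L \in E^N_k$ proceeds as in the proof of Theorem~\ref{thm:embed}, using that $\lambda L$ inherits the $\beta$-H\"older inverse property from $L$ for any $\lambda > 0$. For each $C > 0$ set
\[
G_C^c = \{ L \in E^N_k : \exists\, x,y \in X,\ x \ne y,\ \|L(x-y)\| < C \|x-y\|^{1/\beta}\}.
\]
This set is Borel (by an easy adaptation of Lemma~\ref{lem:measurability}, or because it is the projection of an open subset of $E^N_k \times X \times X$), and the $G_C^c$ decrease as $C \downarrow 0$ to precisely the set of $L \in E^N_k$ whose restriction to $X$ does not admit a $\beta$-H\"older inverse. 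Hence it suffices to prove $\eta_{N,k}(G_C^c) \to 0$ as $C \to 0^+$.

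To estimate $\eta_{N,k}(G_C^c)$ I would decompose by dyadic scale: for $n \in \Z$ let
\[
B_{C,n} = \{L \in E^N_k : \exists\, x,y \in X,\ 2^{-n-1} \le \|x-y\| < 2^{-n},\ \|L(x-y)\| < C\|x-y\|^{1/\beta}\},
\]
so $G_C^c \subseteq \bigcup_{n \ge n_0} B_{C,n}$ for some $n_0$ depending on $\diam X$. Write $d = \udim X$ and fix $\gamma > 0$ with $\beta < 1 - 2(d+\gamma)/k$, so that $N(X,\eps) \le \eps^{-(d+\gamma)}$ for all sufficiently small $\eps > 0$. The decisive choice is to cover $X$ at the \emph{fine} scale $\eps_n = c_0 C \cdot 2^{-n/\beta}$ with $c_0$ small: if $x,y$ witness membership in $B_{C,n}$ and lie in balls of the cover with centres $x_i, y_j$, then \eqref{eq:E} together with the triangle inequality give $\|L(x_i - y_j)\| \lesssim C \cdot 2^{-n/\beta}$ while $\|x_i - y_j\| \ge 2^{-n-2}$ for $n$ large. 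Applying Lemma~\ref{lem: key_ineq_linear} to each of the at most $N(X,\eps_n)^2 \le \eps_n^{-2(d+\gamma)}$ relevant pairs of centres yields
\[
\eta_{N,k}(B_{C,n}) \lesssim \eps_n^{-2(d+\gamma)} \cdot \frac{(C \cdot 2^{-n/\beta})^k}{2^{-nk}} \lesssim C^{k-2(d+\gamma)} \cdot 2^{n[2(d+\gamma) - k(1-\beta)]/\beta},
\]
whose bracketed exponent is strictly negative by the choice of $\gamma$. Summing the resulting geometric series over $n \ge n_0$ (the covering bound holds uniformly once $C$ is sufficiently small) gives $\eta_{N,k}(G_C^c) \lesssim C^{k-2(d+\gamma)}$, which tends to $0$ as $C \to 0^+$, since $\gamma$ may be chosen so that $k > 2(d+\gamma)$.

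The central technical obstacle is the correct matching of the covering scale to the target threshold. A naive cover at the natural scale $2^{-n}$ would incur a Lipschitz error $\sqrt{N}\,\eps_n \sim 2^{-n}$ in $\|L(x_i-y_j)\|$ that overwhelms the target $C \cdot 2^{-n/\beta}$, since $1/\beta > 1$, making Lemma~\ref{lem: key_ineq_linear} useless. The fine scale $\eps_n \sim C \cdot 2^{-n/\beta}$ makes the Lipschitz error admissible but inflates the covering number $N(X,\eps_n)^2$; these two effects balance exactly at the threshold $\beta < 1 - 2\udim X / k$, which accounts for the precise exponent and is entirely analogous to the sharpness arguments of \cite{HK99} and \cite[Chapter~4]{Rob11}.
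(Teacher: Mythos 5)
The paper does not actually prove this theorem; it quotes it from \cite{BAEFN93, EFNT94, HK99} and \cite[Chapter 4]{Rob11}. Your argument is correct and is essentially the standard Hunt--Kaloshin proof from those references: the dyadic decomposition in $\|x-y\|$, the cover of $X$ at the finer scale $\sim C\,2^{-n/\beta}$ so that the Lipschitz error from \eqref{eq:E} is dominated by the target threshold, the pair-by-pair application of Lemma~\ref{lem: key_ineq_linear}, and the exponent bookkeeping (negative precisely when $\beta<1-2(d+\gamma)/k$, with $k>2(d+\gamma)$ giving a positive power of $C$) all match the sources; the only cosmetic differences are that you send the H\"older constant $C\to 0$ instead of running Borel--Cantelli over scales, and that the rescaling reduction from $E^N_k$ to all of $\Lin(\R^N;\R^k)$ is even simpler here since the H\"older-inverse property is invariant under positive scalar multiples of $L$.
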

However, this is not true in the case of Theorem~\ref{thm:embed}.

\begin{rem}\label{rem:no_holder_inverse}
	In general, we cannot claim that the injective map $\phi_L|_{X_L}$ from Theorem~\ref{thm:embed} has a H\"{o}lder continuous inverse. Indeed, it is well-known that for $n \in \N$ there are examples of compact sets $X \subset \R^N$ of Hausdorff and topological dimension equal to $n$, which do not embed topologically into $\R^k$ for $k \leq 2n$ (showing the optimality of the bounds in the Menger--N\"{o}beling embedding theorem, see \cite[Example V.3]{HW41}). Consider a probability measure $\mu$ on $X$ with $\supp \mu = X$, where $\supp$ denotes the topological support of the measure (the intersection of all closed sets of full measure). It is known that such measure exists for any compact set. If the map $\phi_L|_{X_L}$ from Theorem~\ref{thm:embed} for $k = n+1$ had a H\"{o}lder continuous inverse $f = \phi_L^{-1}$, then we could extend $f$ from $\phi_L(X_L)$ to $\R^{n+1}$ preserving the H\"{o}lder continuity (\cite[Theorem IV.7.5]{Banach51}, see also \cite{minty1970extension}). Then $Y = \{ x \in X : f \circ \phi_L(x) = x \}$ would be a closed subset of $X$ with $\mu(Y)=1$, hence $Y=X$, so $\phi_L$ would be homeomorphism between $X$ and $\phi_L(X) \subset \R^{n+1}$, which would give a contradiction.
\end{rem}

\section{Probabilistic Takens delay embedding theorem}\label{sec:takens}
In this section we present the proof of the extended probabilistic Takens delay embedding theorem. It turns out that linear perturbations are insufficient for Takens-type theorems, see Example~\ref{ex:no_linear_takens}. As observed in \cite{SYC91}, it is enough to take perturbations over the space of polynomials of degree $2k$. This can be easily extended to more general families of functions.
\begin{defn}
Let $X$ be a subset of $\R^N$. A family of transformations $h_1, \ldots, h_m \colon  X \to \R$ is called a \emph{$k$-interpolating family} on set $X$, if for every collection of distinct points $x_1, \ldots, x_k \in X$ and every $\xi = (\xi_1, \ldots, \xi_k) \in \R^k$ there exists $(\alpha_1, \ldots, \alpha_m) \in \R^m$ such that 
$\alpha_1 h_1(x_i) + \cdots + \alpha_m h_m(x_i) = \xi_i$
for each $i=1, \ldots, k$. In other words, the matrix
\[ \begin{bmatrix} h_1(x_1) & \ldots & h_m(x_1) \\
\vdots &\ddots & \vdots \\
h_1(x_k) & \ldots & h_m(x_k)
\end{bmatrix} \]
has full row rank as a transformation from $\R^m$ to $\R^k$. Note that the same is true for any collection of $l$ distinct points with $l \leq k$.
\end{defn}
\begin{rem}\label{rem:poly_interp} It is known that any linear basis $h_1, \ldots, h_m$ of the space of real polynomials of $N$ variables of degree at most $k-1$ is a $k$-interpolating family (see e.g.~\cite[Section~1.2, eq.~(1.9)]{poly-interpolation}).
\end{rem} 

For a transformation $T \colon  X \to X$ and $p \in \N$ denote by $\Per_p(T)$ the set of periodic points of minimal period $p$, i.e.
\[ \Per_p(T) = \{ x \in X : T^p x = x  \text{ and } T^j x\neq x \text{ for } j = 1, \ldots, p-1\}.\] 
Let $\mu$ and $\nu$ be measures on a measurable space $(\mX, \mF)$. The measure $\mu$ is called \emph{singular} with respect to $\nu$, if there exists a measurable set $Y \subset \mX$ such that $\mu(\mX \setminus Y) = \nu(Y) = 0$. In this case we write $\mu \perp \nu$. By $\mu|_A$ we denote the restriction of $\mu$ to a set $A \in \mF$.

\begin{thm}[{\bf Probabilistic Takens delay embedding theorem -- extended version}{}]\label{thm:takens}
Let $X \subset \R^N$ be a Borel set, $\mu$ be a Borel $\sigma$-finite measure on $X$ and $T\colon  X \to X$ an injective, locally Lipschitz map. Take $k \in \N$ and $\beta \in (0,1]$ such that $\mH^{\beta k}(X)= 0$ and assume $\mu|_{\Per_p(T)} \perp \mathcal{H}^{\beta p}$ for every $p= 1, \ldots, k-1$. Let $h \colon  X \to \R$ be a locally $\beta$-H\"older function and $h_1, \ldots, h_m\colon  X \to \R$ a $2k$-interpolating family on $X$ consisting of locally $\beta$-H\"{o}lder functions. For $\alpha = (\alpha_1, \ldots, \alpha_m) \in \R^m$ denote by $h_\alpha \colon  X \to \R$ the transformation
\[ h_\alpha (x) = h(x) + \sum \limits_{j=1}^m \alpha_j h_j (x). \]
Then for Lebesgue almost every $\alpha = (\alpha_1, \ldots, \alpha_m) \in \R^m$, there exists a Borel set $X_\alpha \subset X$ of full $\mu$-measure, such that the delay-coordinate map
\[
\phi_\alpha^T\colon  X \to \R^k, \qquad \phi_\alpha^T(x) = (h_\alpha(x), h_\alpha(Tx), \ldots, h_\alpha(T^{k-1} x))
\]
is injective on $X_\alpha$.
\end{thm}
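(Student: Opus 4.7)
The proof follows the scheme of Theorem~\ref{thm:embed}, with Lemma~\ref{lem: key_ineq_inter} replacing Lemma~\ref{lem: key_ineq_linear} to handle the extra linear-algebraic structure in $\alpha$ arising from the iterates of $T$. By Lemma~\ref{lem:dimh_fsigma} we may assume $X$ is $\sigma$-compact, and by a scaling reduction analogous to the one opening the proof of Theorem~\ref{thm:embed} it suffices to work with $\alpha \in B_m(0,\rho)$ for an arbitrary fixed $\rho > 0$. Define
\[
A = \{(x,\alpha) \in X \times B_m(0,\rho) : \phi_\alpha^T(x) = \phi_\alpha^T(y) \text{ for some } y \in X \setminus \{x\}\}.
\]
Continuity of $T$ and the $h_j$ makes $A$ Borel via Lemma~\ref{lem:measurability}(c), and by Fubini's theorem it suffices to show $\Leb(A_x) = 0$ for $\mu$-a.e.\ $x \in X$.

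Fix such an $x$. For each candidate collision point $y$, the equation $\phi_\alpha^T(x) = \phi_\alpha^T(y)$ becomes the affine linear system $L_{x,y}\alpha = -c_{x,y}$, where $L_{x,y}$ is the $k \times m$ matrix with $(i,j)$-entry $h_j(T^i x) - h_j(T^i y)$ and $c_{x,y} = (h(T^i x) - h(T^i y))_{i=0}^{k-1}$. I would partition the witnesses into an \emph{orbit class}, $y \in \Orb^{\pm}(x) \setminus \{x\}$ (i.e.\ $y = T^l x$ or $x = T^l y$ for some $l \ge 1$), and an \emph{off-orbit class}. The orbit class is countable; for each such $y$, a short linear-algebra computation using the $2k$-interpolating property shows that $L_{x,y}$ has rank $k$ whenever the relevant iterates of $x$ form $k + l$ distinct points, so the bad set of $\alpha$ sits in a proper affine subspace of codimension $k$ and is Lebesgue-null. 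The countable union preserves Lebesgue nullity.

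For the off-orbit class, cover $X$ by countably many open sets $U_j$ on which $h, h_1, \ldots, h_m$ are uniformly $\beta$-H\"older and $T, T^2, \ldots, T^{k-1}$ are uniformly Lipschitz, and for fixed $j, n \in \N$ restrict to witnesses $y \in X \cap U_j$ with $\min_{0 \le i < k} \|T^i y - T^i x\| \ge 1/n$. Given $\varepsilon > 0$, use $\mH^{\beta k}(X) = 0$ to choose a cover by balls $B(y_i, \varepsilon_i)$ with $\sum_i \varepsilon_i^{\beta k} < \varepsilon$. The separation condition combined with the $2k$-interpolating property yields $\sigma_k(L_{x,y_i}) > 0$; the uniform H\"older bounds give $\|L_{x,y}\alpha + c_{x,y}\| \le M_j \varepsilon_i^\beta$ for $y \in B(y_i, \varepsilon_i) \cap X$ and $\alpha \in B_m(0,\rho)$; and Lemma~\ref{lem: key_ineq_inter} with $p = k$ bounds the bad $\alpha$-set from the $i$-th ball by a constant multiple of $\varepsilon_i^{\beta k}/\sigma_k(L_{x,y_i})^k$. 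Summing over $i$ and letting $\varepsilon \to 0$ gives a Lebesgue-null contribution.

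The main obstacle is the case $x \in \Per_p(T)$ for $p \le k-1$: the iterates $T^i x$ collapse, $\phi_\alpha^T(x)$ acquires a period-$p$ pattern, and the rank-$k$ argument for $L_{x,y}$ can break down (notably when $y$ is itself periodic with a commensurable period), so that the lower bound on $\sigma_k$ degenerates. The singularity assumption $\mu|_{\Per_p(T)} \perp \mH^{\beta p}$ is tailored to this: it provides a Borel set $Z_p \subset \Per_p(T)$ with $\mH^{\beta p}(Z_p) = 0$ carrying $\mu|_{\Per_p(T)}$, and on this set one reruns the cover argument using $\mH^{\beta p}$-null covers and the $p$-th singular value of the reduced linear system, applying Lemma~\ref{lem: key_ineq_inter} with $p$ in place of $k$. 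Combining all strata yields $\Leb(A_x) = 0$ for $\mu$-a.e.\ $x$, completing the proof.
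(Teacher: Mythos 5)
Your architecture matches the paper's (Fubini in $(x,\alpha)$, an orbit/off-orbit split, H\"older covers combined with Lemma~\ref{lem: key_ineq_inter}), but the treatment of periodic points is organized around the wrong variable, and taken literally the off-orbit step breaks. The claim that the separation condition plus the $2k$-interpolating property yields $\sigma_k(D_{x,y_i})>0$ for \emph{every} off-orbit witness is false: if $x\in\Per_p(T)$ and $y\in\Per_{p'}(T)$ with $\mathrm{lcm}(p,p')<k$, the rows of $D_{x,y}$ repeat and $\rank D_{x,y}<k$. The usable rank estimate (the paper's Lemma~\ref{lem:claim}) is governed by the period of the \emph{witness} $y$: for $y\notin\Orb(x)$ one gets $\rank D_{x,y}\ge p$ when $y\in\Per_p(T)$ with $p<k$, and $\rank D_{x,y}=k$ otherwise, uniformly in $x$. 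Accordingly the stratification must be of the witness set: one fixes full-measure $\sigma$-compact carriers $X_p\subset\Per_p(T)$ with $\mH^{\beta p}(X_p)=0$ for $p<k$, and $X_k$ in the complement of the low-period points with $\mH^{\beta k}(X_k)=0$, and for collisions with $y\in X_p$ one covers $X_p$ by balls with $\sum_i\eps_i^{\beta p}\le\eps$ and applies Lemma~\ref{lem: key_ineq_inter} with index $p$. Your version ties both the Hausdorff exponent and the singular-value index to the period of the base point $x$ (``on this set one reruns the cover argument using $\mH^{\beta p}$-null covers''), which does not parse into a proof: for $x\in\Per_p(T)$ the potential witnesses range over essentially all of $X$, and $\mH^{\beta p}(X)$ may be infinite for $p<k$, so no $\mH^{\beta p}$-efficient cover of the witness set exists.

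A second, smaller gap: exhausting the off-orbit witnesses by the condition $\min_i\|T^iy-T^ix\|\ge 1/n$ gives $\sigma(D_{x,y_i})>0$ pointwise but no lower bound uniform in $i$, so the sum $\sum_i\eps_i^{\beta p}/\sigma_p(D_{x,y_i})^p$ is not controlled by $\sum_i\eps_i^{\beta p}\le\eps$. The paper instead exhausts directly by the sets $\{y:\sigma_p(D_{x,y})\ge 1/n\}$, which are relatively closed by continuity of singular values, so the factor $n^p$ comes out of the sum. With these two corrections your outline becomes essentially the paper's proof.
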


Notice that  Theorem~\ref{thm:takens_simple} follows from Theorem~\ref{thm:takens} by Remark~\ref{rem:poly_interp}.

\begin{rem}[{\bf Invariant measure case -- extended version}{}]\label{rem:invariant} Under the assumptions of Theorem~\ref{thm:takens}, the following hold.
\begin{itemize}
\item[(a)] If the measure $\mu$ is $T$-invariant, then the set $X_{\alpha}$ can be chosen to satisfy $T(X_{\alpha}) \subset X_{\alpha}$.
\item[(b)] If the measure $\mu$ is finite and $T$-invariant, then the set $X_{\alpha}$ can be chosen to satisfy $T(X_{\alpha}) = X_{\alpha}$.
\item[(c)] If the measure $\mu$ is $T$-invariant and ergodic, then the assumption on the periodic points of $T$ in Theorem~\ref{thm:takens} can be omitted.
\end{itemize}
\end{rem}

Under the notation of Theorem~\ref{thm:takens}, we first show a preliminary lemma. For $x \in X$ define its \emph{full orbit} $\Orb(x)$ as
\[\Orb(x) = \{ T^n x : n \ge 0\} \cup \{y \in X : T^n y = x \text{ for some } n \in \N\}.\]
Note that since $T$ is injective, all full orbits are at most countable, and any two full orbits $\Orb(x)$ and $\Orb(y)$ are either equal or disjoint. For $x,y \in X$ let $D_{x,y}$ be the $k\times m$ matrix defined by
\[ D_{x,y} = \begin{bmatrix} h_1(x) - h_1(y) & \ldots & h_m(x) - h_m(y) \\
h_1(Tx) - h_1(Ty) & \ldots & h_m(Tx) - h_m(Ty) \\
\vdots & \ddots & \vdots \\
h_1(T^{k-1}x) - h_1(T^{k-1}y) & \ldots & h_m(T^{k-1}x) - h_m(T^{k-1}y) \\
\end{bmatrix}. \]

\begin{lem}\label{lem:claim}
For $x, y \in X$, the following statements hold. 
\begin{enumerate}[\rm (i)]
\item If $y \neq x$, then $\rank D_{x,y} \geq 1$.
\item If $y \notin \Orb(x)$ and $y \in \Per_p(T)$ for some $p \in \{1, \ldots, k-1\}$, then $\rank D_{x,y} \geq p$.
\item If $y \notin \Orb(x)$ and $y \notin \bigcup \limits_{p=1}^{k-1} \Per_p(T)$, then $\rank D_{x,y} = k$.
\end{enumerate}
\end{lem}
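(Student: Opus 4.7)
My plan is to reformulate all three parts uniformly and then apply the $2k$-interpolating property. For any choice of row-index set $\{i_1<\cdots<i_r\}\subset\{1,\dots,k\}$, the corresponding rows of $D_{x,y}$ are linearly independent precisely when, for every target $(c_1,\dots,c_r)\in\R^r$, there exists $\alpha\in\R^m$ with
\[
h_\alpha(T^{i_s-1}x)-h_\alpha(T^{i_s-1}y)=c_s,\qquad s=1,\dots,r,
\]
where $h_\alpha=\sum_j\alpha_j h_j$. Producing such an $\alpha$ reduces to prescribing the values of $h_\alpha$ on a controlled finite subset of $X$ and invoking the interpolating hypothesis.

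For (i), I will take $r=1$, $i_1=1$: since $x\ne y$, applying $2k$-interpolation to the two distinct points $\{x,y\}$ gives $\alpha$ with $h_\alpha(x)=1$, $h_\alpha(y)=0$, so the first row of $D_{x,y}$ is nonzero. For (ii) and (iii), I will set $r:=p$ and $r:=k$ respectively, and try to show the first $r$ rows are linearly independent. The relevant subsets of $X$ are
\[
\mO_x=\{T^{i-1}x:1\le i\le r\},\qquad \mO_y=\{T^{i-1}y:1\le i\le r\}.
\]
Two distinctness facts drive the argument: (a) $\mO_x\cap\mO_y=\varnothing$, which follows from $y\notin\Orb(x)$ and the disjointness of distinct full orbits noted in the paper; and (b) $|\mO_y|=r$, i.e.\ $y,Ty,\dots,T^{r-1}y$ are pairwise distinct. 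Fact (b) is immediate in case (ii) from $y\in\Per_p(T)$ with $p=r$; in case (iii), if $T^iy=T^jy$ with $0\le i<j\le k-1$, injectivity of $T$ would give $y=T^{j-i}y$ with $1\le j-i\le k-1$, contradicting $y\notin\bigcup_{p=1}^{k-1}\Per_p(T)$. Since also $|\mO_x|\le r$, the set $\mO_x\cup\mO_y$ has at most $2r\le 2k$ elements, which is exactly what the $2k$-interpolating hypothesis is designed to handle.

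Given $(c_1,\dots,c_r)\in\R^r$, I will assign arbitrary values $a_z\in\R$ to each $z\in\mO_x$, set $b_i:=a_{T^{i-1}x}-c_i$ for $i=1,\dots,r$, and prescribe $h_\alpha(z)=a_z$ on $\mO_x$ together with $h_\alpha(T^{i-1}y)=b_i$ on $\mO_y$; the latter is well-defined since the $T^{i-1}y$ are pairwise distinct. Applying $2k$-interpolation on the $\le 2k$ distinct points of $\mO_x\cup\mO_y$ produces an $\alpha$ realizing all prescribed values, whence
\[
h_\alpha(T^{i-1}x)-h_\alpha(T^{i-1}y)=a_{T^{i-1}x}-b_i=c_i
\]
for each $i$, giving $\rank D_{x,y}\ge r$. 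In case (iii) this combines with the trivial upper bound (the matrix has only $k$ rows) to yield equality.

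The one subtlety I expect to be the main obstacle at first glance is that $x$ itself may be periodic with period $<r$, so the $x$-iterates repeat as $i$ varies and the left-hand sides of the target equations are not independent. The two-step prescription handles this automatically: $a_z$ is fixed once per distinct element of $\mO_x$, and any pair of indices $i_1\ne i_2$ with $T^{i_1-1}x=T^{i_2-1}x$ merely imposes different adjustments $b_{i_1},b_{i_2}$ at the genuinely distinct points $T^{i_1-1}y\ne T^{i_2-1}y$. No compatibility condition is violated, and, pleasantly, no hypothesis on the periodic points of $x$ is required.
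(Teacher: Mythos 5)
Your proof is correct and is essentially the paper's argument in different clothing: the paper factors $D_{x,y}=J_{x,y}V_{x,y}$, uses $2k$-interpolation on the at most $2k$ distinct points of the two orbit segments to make $V_{x,y}$ surjective, and reads off $\rank J_{x,y}\ge r$ from the $-\Id_{r\times r}$ block sitting over the distinct, orbit-disjoint $y$-iterates --- which is exactly your explicit prescription of arbitrary values $a_z$ on $\mO_x$ and adjusted values $b_i$ on $\mO_y$. Your observation that possible periodicity of $x$ causes no trouble (values are assigned once per \emph{distinct} point of $\mO_x$) corresponds to the paper's bookkeeping with $q\le k$ distinct $x$-iterates and $l=q+r\le 2k$.
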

\begin{proof}
For (i), it suffices to observe that the first row of $D_{x,y}$ is non-zero as long as $x\neq y$ and therefore $\rank(D_{x,y}) \geq 1$. Indeed, otherwise we would have $h_j(x) = h_j(y)$ for $j=1, \ldots, m$ which contradicts the fact that $h_1, \ldots, h_m$ is an interpolating family.

Assume now $y \notin \Orb(x)$, which implies $\Orb(y) \cap \Orb(x) = \emptyset$. Let $q$ (resp.~$r$) be a maximal number from $\{1, \ldots, k\}$ such that the points $x, Tx, \ldots, T^{q-1} x$ (resp.~$y, Ty, \ldots, T^{r-1} y$) are distinct. Notice that if $y \in \Per_p(T)$ for some $p \in \{1, \ldots, k-1\}$, then $r = p$, and if $y \notin \bigcup \limits_{p=1}^{k-1} \Per_p(T)$, then $r = k$. Thus, the assertions (ii)--(iii) of the lemma can be written simply as one condition 
\begin{equation}\label{eq:r}
\rank D_{x,y} \geq r. 
\end{equation}
To show that \eqref{eq:r} holds, denote the points $x, Tx, \ldots, T^{q-1} x, y, Ty, \ldots, T^{r-1}y$, preserving the order, by $z_1, \ldots, z_l$, for $l = q + r$. By the definition of $q, r$, we have $1 \leq l \le 2k$ and the points $z_1, \ldots, z_l$ are distinct. Thus, the matrix $D_{x,y}$ can be written as the product
\[ D_{x,y} = J_{x,y} V_{x,y}, \]
where 
\[ V_{x,y}
= \begin{bmatrix} h_1(z_1) & \ldots & h_m(z_1) \\
\vdots & \ddots & \vdots \\
h_1(z_l) & \ldots & h_m(z_l)
\end{bmatrix} \]
and $J_{x,y}$ is a $k \times l$ matrix with entries in $\{-1, 0, 1\}$ and  block structure of the form
\[
J_{x,y} =  \left[ \begin{array}{c|c} * & -\Id_{r \times r} \\ \hline
* & * \end{array} \right],
\]
where $\Id_{r \times r}$ is the $r \times r$ identity matrix. It follows that $\rank J_{x,y} \geq r$. Moreover, since $z_1, \ldots, z_l$ are distinct and $h_1, \ldots, h_m$ is a $2k$-interpolating family, the matrix $V_{x,y}$ is of full rank, hence $\rank D_{x,y} = \rank J_{x,y}\geq r$, which ends the proof.
\end{proof}

\begin{proof}[Proof of Theorem~\rm\ref{thm:takens}] 
We proceed similarly as in the proof of Theorem~\ref{thm:embed}, using Lemma~\ref{lem: key_ineq_inter} instead of Lemma~\ref{lem: key_ineq_linear},   together with the suitable rank estimates coming from Lemma~\ref{lem:claim}.
In the same way as in the proof of Theorem~\ref{thm:embed}, we show that it is enough to check that the suitable set $X_\alpha$ exists for $\eta_m$-almost every $\alpha \in B_m(0,1)$. 

Applying Lemma~\ref{lem:dimh_fsigma} to the sets $\Per_p(T)$, $p = 1, \ldots,  k-1$ and (possibly zero) measures $\mu|_{\Per_p(T)}$, we find (possibly empty) disjoint $\sigma$-compact sets $X_1, \ldots, X_{k-1} \subset X$ such that
\[X_p \subset \Per_p(T),\quad \mu(\Per_p(T) \setminus X_p) = 0, \quad \mH^{\beta p}(X_p)=0 \quad \text{ for } p = 1, \ldots, k-1.\] Similarly, there exists a $\sigma$-compact set $X_k \subset X \setminus \bigcup \limits_{p=1}^{k-1}\Per_p(T)$ such that \[\mu\Big( \Big(X \setminus \bigcup \limits_{p=1}^{k-1}\Per_p(T)\Big) \setminus X_k \Big) = 0 \quad \text{ and } \quad \mH^{\beta k}(X_k)=0.\]
Note that $X_k$ contains both aperiodic and periodic points (with period at least $k$). Let 
\[
\tilde X = \bigcup \limits_{p=1}^k X_p.
\]
Then $\tilde X\subset X$ is a $\sigma$-compact set of full $\mu$-measure. Define
\[ A = \{ (x, \alpha) \in \tilde X \times B_m(0,1) :  \phi_\alpha^T (x) = \phi_\alpha^T (y) \text{ for some } y \in \tilde X \setminus \{ x \}\}. \]
The set $A$ is Borel by Lemma~\ref{lem:measurability}. For $x \in \tilde X$ and $\alpha \in B_m(0,1)$, denote, respectively, by $A_x$ and $A^\alpha$, the Borel sections
\[ A_x = \{ \alpha \in B_m(0,1) : (x, \alpha) \in A\},\quad A^\alpha = \{ x \in \tilde X : (x, \alpha) \in A \}. \]
Observe that to show the injectivity of $\phi_\alpha^T$ on a set of full $\mu$-measure, it is enough to prove $\eta_m(A_x) = 0$ for every $x \in \tilde X$, since then by Fubini's theorem (\cite[Thm. 8.8]{R87}), $(\eta_m \otimes \mu) (A) = 0$ and, consequently, $\mu(A^\alpha) = 0$ for $\eta_m$-almost every $\alpha \in B_m(0,1)$. As $\phi_\alpha^T$ is injective on $\tilde X \setminus A^\alpha$ and $\tilde X$ has full $\mu$-measure, the proof of the claim is finished. 

Fix $x \in \tilde X$. To show $\eta_m(A_x) = 0$, note that for $y \in \tilde X$,
\begin{equation}\label{e:matrix_form} \phi_\alpha^T(x) - \phi_\alpha^T(y) = D_{x,y}\alpha + w_{x,y}
\end{equation}
for
\[
w_{x,y} = \begin{bmatrix} h(x) - h(y) \\
h(Tx) - h(Ty)\\
\vdots \\
h(T^{k-1} x) - h(T^{k-1}y) \end{bmatrix}.
\]
Write $A_x$ as
\[ A_x = A_x^{\mathrm{orb}} \cup \bigcup \limits_{p=1}^k A_x^p,\]
where
\begin{align*}
A_x^{\mathrm{orb}} &= \{ \alpha \in B_m(0,1) : \phi_\alpha^T (x) = \phi_\alpha^T (y) \text{ for some } y \in \tilde X \cap \Orb(x) \setminus \{ x \}\},\\
A_x^p &= \{ \alpha \in B_m(0,1) : \phi_\alpha^T (x) = \phi_\alpha^T (y)  \text{ for some } y \in X_p \setminus \{x\} \}, \quad p = 1, \ldots, k.
\end{align*}
The set $A_x^{\mathrm{orb}}$ is Borel as a countable union of closed sets of the form
\begin{equation}\label{e:orb_sum}
\{ \alpha \in B_m(0,1) :  \phi_\alpha^T (x) = \phi_\alpha^T (y) \}, \quad y \in \tilde X \cap \Orb(x) \setminus \{ x \},
\end{equation}
while each set $A_x^p$ is Borel as a section of the set
\[
\{ (x, \alpha) \in \tilde X \times B_m(0,1) : \phi_\alpha^T (x) = \phi_\alpha^T (y)  \text{ for some } y \in X_p \setminus \{x\} \},
\]
which is Borel by Lemma~\ref{lem:measurability}.
To end the proof, it is enough to show that the sets $A_x^{\mathrm{orb}}$ and  $A_x^p$, $p = 1, \ldots, k$, have $\eta_m$ measure zero. 

To prove $\eta_m(A_x^{\mathrm{orb}}) = 0$ it suffices to check that the sets of the form \eqref{e:orb_sum} have $\eta_m$ measure zero. By \eqref{e:matrix_form}, we have
\[ \{ \alpha \in B_m(0,1) :  \phi_\alpha^T (x) = \phi_\alpha^T (y) \} = \{ \alpha \in B_m(0,1) : D_{x,y} \alpha = -w_{x,y} \} \]
and Lemma~\ref{lem:claim} gives $\rank D_{x,y}\geq 1$ whenever $y \neq x$, so each set of the form \eqref{e:orb_sum} is contained in an affine subspace of $\R^m$ of codimension at least $1$. Consequently, it has $\eta_m$ measure zero. 

Since $T$ is locally Lipschitz, $h, h_1, \ldots, h_m$ are locally $\beta$-H\"older and $X$ is separable, there exists a countable covering  $\mathcal V$ of $X$ by open sets in $\R^N$, such that for every $V \in \mathcal V$, the map $T$ is Lipschitz on $V$ and $h, h_1, \ldots, h_m$ are $\beta$-H\"older on $V$. Let $\mathcal U$ be the collection of all sets of the form $U = V_0 \cap T^{-1}(V_1) \cap \ldots \cap T^{-(k-1)}(V_{k-1})$, where $V_0, \ldots, V_{k-1} \in \mathcal V$. Then $\mathcal U$ is a countable covering of $X$ by open sets, and we can write $\mathcal U = \{U_j\}_{j\in \N}$. By definition, for every $j \in \N$ there exists $C_j > 0$ such that
\begin{align*}
\|T^{s+1}(y) - T^{s+1}(y')\| &\le C_j \|T^s(y)-T^s(y')\|,\\
\|h(T^s(y)) - h(T^s(y'))\| &\le C_j \|T^s(y)-T^s(y')\|^\beta, \\
\|h_r(T^s(y)) - h_r(T^s(y'))\| &\le C_j \|T^s(y)-T^s(y')\|^\beta
\end{align*}
for every $y, y' \in U_j \cap X$, $s \in \{0, \ldots, k-1\}$, $r \in \{1, \ldots, m\}$. By induction, it follows that
\begin{equation}\label{eq:lipschitz-holder}
\begin{aligned}
\|T^s(y) - T^s(y')\| &\le C_j^s \|y-y'\|,\\
\|h(T^s(y)) - h(T^s(y'))\| &\le C_j^{\beta s + 1}\|y-y'\|^\beta, \\
\|h_r(T^s(y)) - h_r(T^s(y'))\| &\le C_j^{\beta s + 1} \|y-y'\|^\beta
\end{aligned}
\end{equation}
for $y, y' \in U_j \cap X$, $s \in \{0, \ldots, k-1\}$, $r \in \{1, \ldots, m\}$. 

To prove $\eta_m(A_x^p) = 0$ for $p = 1, \ldots, k$, we follow the strategy used in \cite{SYC91} (see also \cite{Rob11}). 
Fix $n \in \N$ and for $j \in \N$ define
\begin{align*}
X_x^{p, n} &= \Big\{ y \in X_p : \sigma_p(D_{x,y}) \geq \frac{1}{n}\Big\},\\
A_x^{p, j, n} &= \{ \alpha \in B_m(0,1) :  \phi_\alpha^T (x) = \phi_\alpha^T (y) \text{ for some } y \in U_j \cap X_x^{p, n} \setminus \{x\}\},
\end{align*}
where $\sigma_p(D_{x,y})$ is the $p$-th largest singular value. Note that
singular values of given order depend continuously on the coefficients of the matrix, see e.g.~\cite[Corollary 8.6.2]{MatrixComputations}. Hence, the set $X_x^{p, n}$ is $\sigma$-compact as a closed subset of $X_p$ and by Lemma~\ref{lem:measurability}, the set $A_x^{p, j, n}$ is Borel.

By Lemma~\ref{lem:claim}, for every $y \in X_p \setminus \Orb(x)$ we have $\rank D_{x,y} \geq p$. This implies $\sigma_p(D_{x,y})>0$ (see e.g. \cite[Lemma 14.2]{Rob11}). Hence, 
\[ A_x^{p} \setminus A_x^{\mathrm{orb}} =  \bigcup \limits_{j=1}^{\infty }\bigcup \limits_{n=1}^{\infty } A_x^{p, j, n} \setminus A_x^{\mathrm{orb}}. \]
Consequently, it is enough to prove $\eta_m(A_x^{p, j, n} \setminus A_x^\mathrm{orb}) = 0$ for every $n \in \N$. 

Fix $\eps>0$. Since $\mH^{\beta p}(U_j \cap X_x^{p,n} \setminus \Orb(x)) \leq \mH^{\beta p}(X_p) = 0$, there exists a collection of balls $B_N(y_i, \eps_i)$, for $y_i \in U_j \cap X_x^{p,n}\setminus \Orb(x)$ and $0 < \eps_i < \eps$, $i \in \N$, such that
\begin{equation}\label{e:cover_takens} U_j \cap X_x^{p,n} \setminus \Orb(x) \subset \bigcup \limits_{i \in \N} B_N(y_i, \eps_i) \quad \text{ and } \quad \sum \limits_{i=1}^\infty \eps_i^{\beta p} \leq \eps.
\end{equation}
Take $\alpha \in A_x^{p, j, n} \setminus A_x^{\mathrm{orb}}$ and let $y \in U_j \cap X_x^{p, n} \setminus \Orb(x) $ be such that $\phi_\alpha^T(x) = \phi_\alpha^T(y)$. Then for $y_i$ with $y \in B(y_i, \eps_i)$ we have
\begin{equation}\label{e:D_xyj}
\begin{aligned}
	\|D_{x, y_i}\alpha + w_{x, y_i} \| &=  \|\phi_\alpha^T (x) - \phi_{\alpha}^T (y_i) \|
	= \|\phi_\alpha^T (y) - \phi_{\alpha}^T (y_i) \|\\
	&\leq \sqrt{\sum_{s=0}^{k-1} \Big(\|h(T^sy) - h(T^sy_i)\| + \sum_{r=1}^m \alpha_r \|h_r(T^sy) - h_r(T^sy_i)\|\Big)^2}\\
	&\leq M_j \|y-y'\|^\beta \le M_j \eps_i^\beta
\end{aligned}
\end{equation}
for
\[
M_j = (1 + \sqrt{m})\;\sqrt{\sum_{s=0}^{k-1} C_j^{2(\beta s + 1)}}, 
\]
by \eqref{eq:lipschitz-holder} and the fact $\alpha \in B_m(0,1)$. By \eqref{e:D_xyj}, 
\[ A_x^{p, j, n} \setminus A_x^{\mathrm{orb}} \subset \bigcup \limits_{i \in \N} \{  \alpha \in B_m(0,1) : \|D_{x, y_i}\alpha + w_{x, y_i} \| \leq M_j\eps_i^\beta \}. \]
Since for every $i \in \N$ we have $\sigma_p(D_{x, y_i}) \geq 1/n$, we can apply Lemma~\ref{lem: key_ineq_inter} and \eqref{e:cover_takens} to obtain
\[ \eta_m(A_x^{p, j, n} \setminus A_x^{\mathrm{orb}}) \leq \sum \limits_{i =1 }^\infty C_{m,k} \frac{M_j^p\eps_i^{\beta p}}{1 / n^p} \leq  C_{m,k}M_j^pn^p\eps. \]
Since $\eps>0$ was arbitrary, we conclude that $\eta_m(A_x^{p, j, n} \setminus A_x^{\mathrm{orb}}) = 0$, so in fact $\eta_m(A_x^{p, j, n}) = 0$. This ends the proof of Theorem~\ref{thm:takens}. 
\end{proof}

\begin{proof}[Proof of Remark~\rm\ref{rem:invariant}]
Suppose that the measure $\mu$ is $T$-invariant. Then it is easy to check that the set 
\[
\tilde X_\alpha = \bigcap \limits_{n = 0}^{\infty} T^{-n} (X_\alpha) 
\]
is a Borel subset of $X_\alpha$ of full $\mu$-measure satisfying $T(\tilde X_\alpha) \subset \tilde X_\alpha$. Hence, to show (a), it suffices to replace the set $X_\alpha$ by $\tilde X_\alpha$. 

In the case when $\mu$ is additionally finite, we first remark that the measure $\mu$ is also forward invariant, i.e.~$\mu(T(Y)) = \mu(Y)$ for Borel sets $Y \subset X$. Note that if $Y$ is Borel, then so is $T(Y)$ as the image of a Borel set under a continuous and injective mapping (see e.g. \cite[Theorem~15.1]{K95}). Using this together with the invariance of $\mu$ and the injectivity of $T$, we check that 
\[ 
\tilde X_\alpha = \bigcap \limits_{n \in \Z} T^{-n} (X_\alpha) 
\]
is a Borel subset of $X_\alpha$ of full $\mu$-measure satisfying $T(\tilde X_\alpha) = \tilde X_\alpha$. This gives (b). Notice that the  finiteness of $\mu$ is indeed necessary, as for $X = \N,$ $T(x) = x+1$ and $\mu$ the counting measure, there does not exist a set $Y \subset X$ of full $\mu$-measure satisfying $T(Y)=Y$.

To show (c), suppose that $\mu$ is $T$-invariant and ergodic. 
Obviously, we can assume that the $\mu$-measure of the set of all periodic points of $T$ is positive (including $+\infty$). Then there exists $p \in \N$ such that the measure of the set $P$ of all $p$-periodic points of $T$ is positive (including $+\infty$).

Suppose first that $\mu$ restricted to $P$ is non-atomic. Then there exists a Borel set $Y \subset P$ with $0 < \mu(Y) < \mu(X)/p$. Let $Z = Y \cup T^{-1}(Y) \cup \ldots \cup T^{-(p-1)}(Y)$. Then $0 < \mu(Z) < \mu(X)$ and, by the injectivity of $T$, we have $T^{-1}(Z) = Z$, which contradicts the ergodicity of $\mu$.  

Suppose now that $\mu$ has an atom in $P$. Since $\mu$ is a Borel $\sigma$-finite measure in a Euclidean space, this is equivalent to the fact that $\mu(\{x\}) > 0$ for some $x \in P$. Let $\mathcal O$ be the periodic orbit of $x$. Again by the injectivity of $T$, we have $T^{-1}(\mathcal O) = \mathcal O$, so by the ergodicity of $\mu$, the set $\mathcal O$ has full $\mu$-measure. This means that $\mu$ is supported on a set of Hausdorff dimension $0$, which obviously gives (c).
\end{proof}

The original Takens delay embedding theorem states that for given finite dimensional $C^2$ manifold $M$ and generic pair of $C^2$-diffeomorphism $T\colon M \to M$ and $C^2$-function $h\colon  M \to \R$, the corresponding delay-coordinate map $\phi\colon  M \to \R^k$, $\phi(x) = (h(x), h(Tx), \ldots, h(T^{k-1}x))$ is a $C^2$-embedding (an injective immersion) as long as $k > 2\dim M$. It was followed by the box-counting dimension version of Sauer, Yorke and Casdagli (Theorem~\ref{thm:standard_takens}) and subsequently by the infinite-dimensional result of \cite{Rob05} (see also  \cite[Section 14.3]{Rob11}). Refer to \cite{NV18} for a version of Takens' theorem with a fixed observable and perturbation performed on the dynamics. Takens' theorem involving Lebesgue covering dimension on compact metric spaces and a continuous observable was proved in \cite{Gut16} (see \cite{GQS18} for a detailed proof). See also \cite{1999delay, CaballeroEmbed} for Takens theorem for deterministically driven smooth systems and \cite{StarkEmbedSurvey,  StarkStochEmbed} for stochastically driven smooth systems. 

\begin{examplex}\label{ex:no_linear_takens} It turns out that linear perturbations are not sufficient for Theorems~\ref{thm:standard_takens} and~\ref{thm:takens}, i.e.~it may happen that $\phi_L = (\phi(x) + Lx, \ldots, \phi(T^{k-1}x) + LT^{k-1}x)$ is not (almost surely) injective for a generic linear map $L \colon  \R^N \to \R$. As an example, let $X = B_2(0,1)$, fix $a \in (0,1)$ and define $T\colon X \to X$ as
	\[ T(x) = ax. \]
	Then $T$ is a Lipschitz injective transformation on the unit disc $X \subset \R^2$ with zero being the unique periodic point. Fix $\phi \equiv 0$. We claim that there is no linear observable $L \colon  \R^2 \to \R$ which makes the delay map injective, i.e. for every $k \in \N$ and every $v \in \R^2$ the transformation $x \mapsto \phi_v^T(x)= (\langle x, v \rangle, \langle Tx, v \rangle, \ldots, \langle T^{k-1} x, v \rangle) \in \R^k$ is not injective on $X$. This follows from the fact that for each $1$-dimensional linear subspace $W \subset \R^2$ the set $W \cap X$ is $T$-invariant, hence $\phi_v^T = 0$ on an infinite set $\Ker(\langle \cdot, v \rangle) \cap X$. We have seen that $\phi_v^T$ is not injective for any $v \in \R^2$. No we will see that it also not almost surely injecitve for $\mu$ being the Lebesgue measure on $X$. Note that for $v \in \R^2$ and $c \in \R$, the segment $W_c = \{ z \in X : \langle z, v \rangle = c  \}$ satisfies $T(W_c) \subset W_{ac} $, hence all points on $W_c$ will have the same observation vector $(\langle x, v \rangle, \langle Tx, v \rangle, \ldots, \langle T^{k-1} x, v \rangle) = (c, ac, a^2c, \ldots, a^{k-1}c)$. Therefore, a set $X_v \subset X$ on which $\phi_v^T$ is injective can only have one point on each of the parallel segments $W_c$ contained in $X$. However, such a set $X_v$ cannot be of full Lebesgue measure. Note that the above example can be easily modified to make $T$ a homeomorphism.
\end{examplex}

\section{Examples} \label{sec:examples}
In this section we present two examples which illustrate the usage of Theorem~\ref{thm:embed}. Let us begin with fixing some notation. For $x \in [0,2)$ we will write
\[
x = x_0.x_1 x_2\ldots,
\]
where $x_0.x_1 x_2 \ldots$ is the \emph{binary expansion} of $x$, i.e.
\[
x = \sum \limits_{j=0}^{\infty} \frac{x_j}{2^j}, \quad x_0, x_1, x_2,\ldots \in \{0,1\}.
\]
For a dyadic rational we agree to choose its eventually terminating expansion, i.e.~the one with $x_j = 0$ for $j$ large enough. Let $\pi \colon  \{0,1\}^\N \to [0,1]$ be the coding map
\[ \pi(x_1, x_2, \ldots) = \sum \limits_{j=1}^{\infty} \frac{x_j}{2^j}. \]

\subsection{A modified Kan example}
In the Appendix to \cite{SYC91}, Kan presented an example of a compact set $K \subset \R^N$ with $\hdim K =0$ and such that every linear transformation $L \colon  \R^N \to \R^{N-1}$ fails to be injective on $K$ (see also Remark~\ref{rem:hdim_does_not_work}). It follows from Theorem~\ref{thm:embed}, that whenever we are given a Borel $\sigma$-finite measure $\mu$ on such a set, then almost every linear transformation $L \colon  \R^N \to \R$ is injective on a set of full $\mu$-measure. To illustrate this, we construct a $\sigma$-compact set $X \subset \R^2$ with $\hdim X =0$, which is a slight modification of Kan's example, equipped with a natural Borel $\sigma$-finite measure $\mu$, such that no linear transformation $L \colon  \R^2 \to \R$ is injective on $X$, while for almost every $L$ we explicitly show a set $X_L \subset X$ of full $\mu$-measure, such that $L$ is injective on $X_L$.

Following \cite[Appendix]{SYC91}, we begin with constructing compact sets $A, B \subset [0,1]$ such that
\begin{equation}\label{eq:AB_dim1}
\hdim A = \ldim A = \hdim B =\ldim B =0\quad (\text{hence } \hdim(A \cup B) = 0),
\end{equation} and
\begin{equation}\label{eq:AB_dim2}
\udim A  = \udim B = 1,  \quad \ldim(A \cup B) = \udim(A \cup B)= 1.
\end{equation}
To this aim, let $M_k$, $k \ge 0$, be an increasing sequence of positive integers such that $M_0 = 1$ and $M_k \nearrow \infty$ with $\lim \limits_{k \to \infty} \frac{M_{k+1}}{M_k} = \infty$. Define
\begin{alignat*}{2}
\widetilde A &= \big\{ (x_1,x_2,\ldots) \in \{0,1\}^\N : &\text{ for every even } k, \; &x_j = 0 \text{ for all } j \in [M_{k}, M_{k+1})\\
& & &\text{or } x_j = 1 \text{ for all } j \in [M_{k}, M_{k+1}) \big\},\\
\widetilde B &= \big\{ (x_1,x_2,\ldots) \in \{0,1\}^\N : &\text{ for every odd } k, \;  &x_j = 0 \text{ for all } j \in [M_{k}, M_{k+1})\\
& & &\text{or } x_j = 1 \text{ for all } j \in [M_{k}, M_{k+1}) \big\},
\end{alignat*}
and set
\[ A = \pi(\widetilde A), \quad B = \pi(\widetilde B).\]
It is a straightforward calculation to check that $A$ and $B$ satisfy \eqref{eq:AB_dim1} and \eqref{eq:AB_dim2} (see \cite[Appendix]{SYC91}, \cite[Example 7.8]{falconer2014fractal} or \cite[Section 6.1]{Rob11}). Define $X \subset \R^2$ as
\[ X =  \Big(\{0\} \times \bigcup \limits_{n \in \Z}(A+n)\Big) \cup   \Big(\{1\} \times \bigcup \limits_{n \in \Z}(B+n)\Big). \]
By \eqref{eq:AB_dim1}, we have $\hdim X = 0$. The following two propositions describe the embedding properties of the set $X$.

\begin{prop}\label{prop:X_no_injection}
	No linear transformation $L\colon \R^2 \to \R$ is injective on $X$.
\end{prop}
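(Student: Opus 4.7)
The plan is to write a general linear map as $L(x,y) = \alpha x + \beta y$ and split on whether $\beta$ vanishes. If $\beta = 0$, then $L$ is constant on the infinite set $\{0\} \times (A+\Z) \subset X$ (taking the value $0$ there), so $L$ is not injective on $X$. The interesting case is $\beta \neq 0$; after rescaling $L$ by the nonzero constant $1/\beta$, I may assume $\beta = 1$. Observe that on each ``sheet'' $\{0\}\times(A+\Z)$ and $\{1\}\times(B+\Z)$ the restriction of $L$ equals the $y$-coordinate and is already injective, so any $L$-collision on $X$ must involve a point from the first sheet and a point from the second. The problem therefore reduces to showing $(A-B)+\Z = \R$, which would let me solve $y_1 - y_2 = \alpha$ for arbitrary $\alpha$ with $y_1 \in A+\Z$ and $y_2 \in B+\Z$, producing distinct points $(0,y_1), (1,y_2) \in X$ with $L(0,y_1) = L(1,y_2)$.

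The heart of the argument is the inclusion $A - B \supseteq [0,1]$, which I would prove by an explicit digit-by-digit construction adapted to the block structure $[M_k, M_{k+1})$. Given $c \in [0,1]$ with binary expansion $c = 0.c_1 c_2 \ldots$, define a sequence $\mathbf{a} = (a_j)_{j \ge 1}$ by setting $a_j = c_j$ on odd-indexed blocks and $a_j = 1$ on even-indexed blocks, and a sequence $\mathbf{b} = (b_j)_{j \ge 1}$ by setting $b_j = 0$ on odd-indexed blocks and $b_j = 1 - c_j$ on even-indexed blocks. Then $\mathbf{a} \in \widetilde{A}$ (it is constantly equal to $1$ on every even-indexed block) and $\mathbf{b} \in \widetilde{B}$ (it is constantly equal to $0$ on every odd-indexed block), and digit-wise $a_j - b_j = c_j$ in both kinds of blocks, so $\pi(\mathbf{a}) - \pi(\mathbf{b}) = \sum_{j \ge 1} c_j/2^j = c$. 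Crucially, $a_j \ge b_j$ for every $j$, so no carries occur in the subtraction and no subtleties with dyadic rationals arise. Once $A - B \supseteq [0,1]$ is established, $(A-B)+\Z = \R$ follows immediately and the proposition is proved.

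The main difficulty is simply spotting the correct block-wise pairing, which must exploit both types of blocks nontrivially: $A$ is constrained to be constant only on even-indexed blocks (so the ``free'' digits of $\mathbf{a}$ that encode information about $c$ must sit on odd-indexed blocks), while $B$ is constrained to be constant only on odd-indexed blocks (so the information-bearing digits of $\mathbf{b}$ must sit on even-indexed blocks). The pairing $a_j = 1,\ b_j = 1-c_j$ on even-indexed blocks is what makes the difference $a_j - b_j$ come out to exactly $c_j$ without any borrow, cleanly producing $c$ as $\pi(\mathbf{a}) - \pi(\mathbf{b})$.
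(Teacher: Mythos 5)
Your proposal is correct and follows essentially the same route as the paper: reduce to exhibiting a cross-sheet collision, which amounts to realizing an arbitrary $c\in[0,1)$ (mod $\Z$) as a difference of an element of $A$ and an element of $B$ via a block-by-block digit construction with no borrows; your choice of digits is just the mirror image of the paper's (the paper puts the constant block equal to $0$ in $a$ on even blocks and to $1$ in $b$ on odd blocks, realizing $b-a=z$). The explicit treatment of the degenerate case $\beta=0$ and the observation that $L$ is injective on each sheet are harmless additions the paper elides.
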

\begin{proof} The map $L$ has the form $L(x,y) = \alpha x + \beta y$ for $\alpha, \beta \in \R$. Obviously, we can assume $\beta \neq 0$. Note that the points 
\[
u = (0,a+n),\quad v = (1, b+m), \qquad \text{for }a\in A,\ b \in B,\ n,m \in \Z
\]
are in $X$ and
	\begin{equation}\label{eq:L_kernel} L(u) = L(v) \quad\text{ if and only if }\quad b-a = z,\end{equation}
where
\[
z = -\frac{\alpha}{\beta} + n - m.
\]
For given $\alpha$ and $\beta$, choose $n,m \in \Z$ such that $z \in [0,1)$.
Consider the binary expansion $z = 0.z_1z_2\ldots$ and define
\[
a=0.a_1a_2\ldots \in A,\quad b = 0.b_1b_2\ldots \in B
\]
setting
	\begin{equation}\label{eq:AB_decomposition}
	\begin{aligned}
	&a_j = 0, \quad &&b_j = z_j \quad &&\text{for } j \in [M_{k}, M_{k+1}), &&\text{if } k \text{ is even}, \\
	&a_j = 1 - z_j, \quad &&b_j = 1 \quad &&\text{for } j \in [M_{k}, M_{k+1}), &&\text{if }k \text{ is odd}
	\end{aligned}
	\end{equation}
	(if all $b_j$ are equal to $1$, we set $b=1$). Then $z = b-a$ and (\ref{eq:L_kernel}) implies that $L$ is not injective on $X$.
\end{proof}
Let us now define a natural Borel $\sigma$-finite measure $\mu$ on $X$, starting from a pair of probability measures $\nu_1, \nu_2$ on $\widetilde A$ and $\widetilde B$, respectively. Let
\[ \nu_1 = \bigotimes \limits_{k=0}^{\infty} \textbf{p}_k, \quad  \nu_2 = \bigotimes \limits_{k=0}^{\infty} \textbf{q}_k, \]
where $\textbf{p}_k$ and $\textbf{q}_k$ are probability measures on $\{0,1\}^{M_{k+1}-M_k}$ given as
\[ \textbf{p}_k = \begin{cases} \frac{1}{2} \delta_{(0, \ldots, 0)} + \frac{1}{2} \delta_{(1, \ldots, 1)} &\text{if } k \text{ is even}\\
\big(\frac{1}{2} \delta_{0} + \frac{1}{2} \delta_{1}\big)^{\otimes(M_{k+1}-M_k)} &\text{if } k \text{ is odd}
\end{cases}, \quad \textbf{q}_k = \begin{cases} \big(\frac{1}{2} \delta_{0} + \frac{1}{2} \delta_{1}\big)^{\otimes(M_{k+1}-M_k)} &\text{if } k \text{ is even} \\
\frac{1}{2} \delta_{(0, \ldots, 0)} + \frac{1}{2} \delta_{(1, \ldots, 1)} &\text{if } k \text{ is odd}\\
\end{cases} \]
and the symbol $\delta_a$ denotes the Dirac measure at $a$.
Then $\supp \nu_1 = \widetilde A,\ \supp \nu_2 = \widetilde B$, hence defining
\[ \mu_1 = \pi_* (\nu_1), \quad \mu_2 = \pi_* (\nu_2), \]
we obtain probability measures on $A, B$, respectively, with $\supp \mu_1 = A$, $\supp \mu_2 = B$. Finally, let
\[ \mu = \sum \limits_{n \in \Z} \delta_{0} \otimes (\tau_n)_* \mu_1 + \sum \limits_{n \in \Z} \delta_{1} \otimes (\tau_n)_* \mu_2, \]
where $\tau_n \colon  \R \to \R,\ \tau_n(x) = x +n,\ n \in \Z$. Clearly, $\mu$ is a Borel  $\sigma$-finite measure with $\supp \mu=X$.

For $a \in A,\ b \in B$ let
	\begin{alignat*}{3}
	A_a &= \big\{ x \in A \setminus \{ 1 \} : \; &&x + a = z_0.z_1z_2\ldots \text{ such that the sequence } (z_0, z_1, \ldots)\\
	&&&\text{is constant on } [M_{k}, M_{k+1}-1) \cap \N \text{ for every odd } k \big\},\\
	B_b &= \big\{ x \in B \setminus \{1\} : \; &&x + b = z_0.z_1z_2\ldots \text{ such that the sequence } (z_0, z_1, \ldots)\\
	&&&\text{is constant on } [M_{k}, M_{k+1}-1) \cap \N \text{ for every even } k \big\}.
	\end{alignat*}

\begin{lem}\label{lem:binary_nonconstant}
For every $a \in A,\ b \in B$, we have $\mu_1(A_a) = \mu_2(B_b) = 0$.
\end{lem}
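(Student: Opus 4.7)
The plan is to work on the coding side via $\mu_1 = \pi_* \nu_1$ and exploit the independence structure of $\nu_1$ on odd-indexed blocks. Under $\nu_1$, the coordinates $x_j$ for $j \in [M_k, M_{k+1})$ with $k$ odd are i.i.d.\ Bernoulli$(1/2)$ and independent of the coordinates outside this block. Given $a \in A$, if $\pi(x) \in A_a$ then for every odd $k$ the binary expansion of $z := \pi(x) + a$ must be constant on $[M_k, M_{k+1}-1)$; call this event $E_k$. It suffices to prove that $\nu_1(E_k) \to 0$ as odd $k \to \infty$, since then $\mu_1(A_a) = \nu_1(\pi^{-1}(A_a)) \le \inf_k \nu_1(E_k) = 0$.

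Fix an odd $k$, set $L_k := M_{k+1} - M_k$, and condition on $(x_j)_{j \notin [M_k, M_{k+1})}$. Apart from a $\nu_1$-null set (where $\pi(x)+a$ is dyadic and the expansion is ambiguous), this conditioning determines the carry $c \in \{0,1\}$ entering the block at position $M_{k+1}-1$ during the binary addition producing $z$. Identifying the bits on the block with the integer
\[
X = \sum_{j=M_k}^{M_{k+1}-1} x_j\, 2^{M_{k+1}-1-j},
\]
and defining $A_{\mathrm{bl}}, Z$ analogously from the corresponding bits of $a$ and $z$, binary addition gives $Z \equiv X + A_{\mathrm{bl}} + c \pmod{2^{L_k}}$. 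For fixed $A_{\mathrm{bl}}$ and $c$ this is a bijection of $\{0,1,\ldots,2^{L_k}-1\}$, so the conditional law of $Z$ is uniform on this set whenever the conditional law of $X$ is, which it is by the product structure of $\nu_1$.

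The event $E_k$ asserts that the top $L_k-1$ bits of $Z$ are constant, i.e.\ $Z \in \{0,1,2^{L_k}-2,2^{L_k}-1\}$, so both the conditional and the unconditional probabilities satisfy $\nu_1(E_k) = 4 \cdot 2^{-L_k}$. The hypothesis $M_{k+1}/M_k \to \infty$ forces $L_k \to \infty$, completing the proof that $\mu_1(A_a) = 0$; the proof of $\mu_2(B_b) = 0$ is obtained by the identical argument with the roles of even and odd blocks interchanged. The main technical point I expect to handle carefully is the measurability and almost-sure determination of the carry $c$ from the outside bits, which is resolved by discarding at the outset the $\nu_1$-null set on which the tail $\sum_{j \ge M_{k+1}}(x_j+a_j)2^{-j}$ hits the threshold $2^{-M_{k+1}+1}$ exactly.
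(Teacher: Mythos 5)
Your proof is correct and follows essentially the same approach as the paper's: both arguments exploit that the bits of $x$ on the relevant block are i.i.d.\ uniform and independent of the rest, that the carry entering the block from below takes at most two values, and hence that the event of the sum having a constant digit block has probability $O(2^{-(M_{k+1}-M_k)}) \to 0$. The only difference is bookkeeping — you condition on the outside coordinates and use a modular bijection to get the exact conditional probability $4\cdot 2^{-(M_{k+1}-M_k)}$, whereas the paper counts directly that the block of $x$ can take at most four values, yielding the bound $8\cdot 2^{-(M_{k+1}-M_k)}$.
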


\begin{proof}
	Fix $b=b_0.b_1b_2\ldots \in B$. We will show $\mu_2(B_b) = 0$ (the fact $\mu_1(A_a)=0$ can be proved analogously). The proof proceeds by showing that for each even $k$, the vector $(x_{M_{k}}, \ldots, x_{M_{k+1}-2})$, where $x = x_0.x_1x_2\ldots \in B_b$, can assume at most four values. This will imply $\mu_2(B_b) \leq 8 \cdot 2^{-(M_{k+1} - M_{k})}$ for each even $k$ and, consequently, $\mu_2(B_b) = 0$. To show the assertion, fix an even $k$ and let 
	\[\xi = \sum \limits_{j=M_{k+1}-1}^\infty \frac{x_j+b_j}{2^j}.\]
	Note that $\xi < 2^{-(M_{k+1}-3)}$ (as $\xi<2$ and we exclude expansions with digits eventually equal to $1$). Hence, $\xi = \xi_0.\xi_1 \xi_2\ldots$ with $\xi_j = 0 $ for $j \leq M_{k+1}-3$. Note that, since $b$ is fixed, the values of $\xi_{M_{k+1}-2} \in \{0,1\}$ and $(x_{M_{k}}+b_{M_{k}}, \ldots, x_{M_{k+1}-2}+b_{M_{k+1}-2}) \in \{ (0,\ldots,0),(1,\ldots,1) \}$ determine uniquely the value of $(x_{M_{k}}, \ldots, x_{M_{k+1}-2})$. Therefore, $(x_{M_{k}}, \ldots, x_{M_{k+1}-2})$ can assume at most four values.
\end{proof}

Now for Lebesgue almost every linear transformation $L\colon \R^2 \to \R$ we will construct a set $X_L \subset X$ of full $\mu$-measure, such that $L$ is injective on $X_L$. As previously, write $L(x,y) = \alpha x + \beta y$ for $\alpha,\beta \in \R$. Neglecting a set of zero Lebesgue measure, we can assume $\beta \neq 0$. Let $l \in \Z$ be such that 
\begin{equation}\label{eq:z}
z = -\frac{\alpha}{\beta} + l \text{ belongs to } [0,1).
\end{equation}
Similarly as in \eqref{eq:AB_decomposition}, we can write
	\begin{equation}\label{eq:A'B'_decomposition}
	z = a' - b', \quad  z - 1 = a'' - b'' \quad\text{for some } a',a'' \in A, \;  b',b'' \in B.
	\end{equation}
	Let
	\[ X_L = \Big( \{0\} \times \bigcup \limits_{n \in \Z}(A+n)\Big) \cup  \Big(\{1\} \times \bigcup \limits_{n \in \Z}\big((B \setminus (B_{b'} \cup B_{b''} \cup \{ 1 \}))+n\big)\Big). \]
	Then $X_L \subset X$ and Lemma~\ref{lem:binary_nonconstant} implies that $X_L$ has full $\mu$-measure.

\begin{prop}\label{prop:X_almost_sure_injection} For every $\alpha \in \R,\ \beta \in \R \setminus \{0\}$, the linear transformation $L\colon \R^2 \to \R$, $L(x,y) = \alpha x + \beta y$, is injective on $X_L$.
\end{prop}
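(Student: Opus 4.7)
The plan is to argue by contradiction: assume $u, v \in X_L$ with $u \neq v$ satisfy $L(u) = L(v)$, and split into cases according to the first coordinates. The two ``same-line'' cases are essentially immediate. If $u, v \in \{0\} \times \R$, writing $u = (0, a_1 + n_1)$ and $v = (0, a_2 + n_2)$ with $a_1, a_2 \in A$, the equation $L(u) = L(v)$ together with $\beta \neq 0$ yields $a_1 + n_1 = a_2 + n_2$, so $u = v$. If $u, v \in \{1\} \times \R$ with second coordinates in $B \setminus (B_{b'} \cup B_{b''} \cup \{1\})$, the same reasoning gives $b_1 + n_1 = b_2 + n_2$; the only way for a nontrivial difference in representations would be $\{b_1, b_2\} = \{0, 1\}$, but $1$ is excluded from the horizontal slice by the construction of $X_L$.

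The substantive case is $u = (0, a + n_1)$ with $a \in A$ and $v = (1, b + n_2)$ with $b \in B \setminus (B_{b'} \cup B_{b''} \cup \{1\})$. Using $\alpha/\beta = -z + l$ from \eqref{eq:z}, the equation $L(u) = L(v)$ rewrites as $b - a = z - m$, where $m = l + n_2 - n_1 \in \Z$. The bound $b - a \in [-1, 1]$ combined with $z \in [0, 1)$ restricts $m$ to $\{0, 1\}$ (in the corner $z = 0$, the value $m = -1$ would force $b = 1$, which is excluded). Plugging in the decompositions $z = a' - b'$ and $z - 1 = a'' - b''$ from \eqref{eq:A'B'_decomposition}, the two surviving subcases collapse into the single identity $b + b_* = a + a_*$ with $a_* \in A$ and $b_* \in \{b', b''\}$.

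The heart of the proof is then a carry analysis proving that for every pair $\tilde a, \tilde a' \in A$, the binary expansion of $s = \tilde a + \tilde a'$ is constant on $[M_k, M_{k+1} - 1)$ for every even $k$. On such a block, both summands contribute constant bit-patterns (all $0$ or all $1$) by the definition of $A$, and an eight-case enumeration over the triples $(\alpha, \alpha', c) \in \{0,1\}^3$ of constant bits together with the carry-in $c$ from position $M_{k+1}$ shows directly that the output bits of $s$ are constant on the sub-block $[M_k, M_{k+1} - 1)$, any deviation being confined to the rightmost position $M_{k+1} - 1$, which is precisely the position excluded from the constancy requirement defining $B_{b_*}$. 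Applied to $s = b + b_*$, this forces $b \in B_{b_*}$ (recalling $b \neq 1$), contradicting $b \notin B_{b'} \cup B_{b''}$. The main obstacle is the carry analysis itself: one must verify that no matter which carry propagates into an even block from the odd block to its right, the rigidity of $A$ on even blocks confines the irregular digit to a single position, leaving the constancy on $[M_k, M_{k+1} - 1)$ intact.
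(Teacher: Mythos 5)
Your proof is correct and follows essentially the same route as the paper: reduce to the cross-line case, use $z=a'-b'$ and $z-1=a''-b''$ to arrive at $a+a_*=b+b_*$, and derive a contradiction between the block-constancy of sums of two elements of $A$ (the paper's Lemma~\ref{lem:binary_piecewise_constant}, whose proof the paper leaves to the reader but which your carry enumeration supplies) and the definition of $B_{b'}$, $B_{b''}$. No gaps.
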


For the proof of the proposition we will need the following simple lemma. The proof is left to the reader.

\begin{lem}\label{lem:binary_piecewise_constant}
	Let $x=x_0.x_1x_2\ldots\in [0,1]$, $y = y_0.y_1y_2\ldots \in [0,1]$, $M, N \in \N$, $M<N-1$, be such that $x+y < 2$ and sequences $(x_M, \ldots, x_N)$ and $(y_M, \ldots, y_N)$ are constant. Then $x + y = z_0.z_1z_2\ldots$, where the sequence $(z_M, \ldots, z_{N-1})$ is constant.
\end{lem}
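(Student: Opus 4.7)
Plan: I will decompose $x = x^{<M} + x^{[M,N]} + x^{>N}$ into the bit-blocks at positions $<M$, in $[M,N]$, and $>N$, and similarly for $y$. Letting $a = x_M = \cdots = x_N \in \{0,1\}$ and $b = y_M = \cdots = y_N \in \{0,1\}$, the closed form $\sum_{j=M}^N 2^{-j} = 2^{-(M-1)} - 2^{-N}$ yields
\[
x + y \;=\; S \;+\; (a+b)\bigl(2^{-(M-1)} - 2^{-N}\bigr) \;+\; \eta,
\]
where $S := x^{<M} + y^{<M}$ is a nonnegative integer multiple of $2^{-(M-1)}$ (hence has binary digits only at positions $\le M-1$), and $\eta := x^{>N} + y^{>N}$ satisfies $0 \le \eta < 2^{-(N-1)}$. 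The strict bound on $\eta$ uses the convention of choosing terminating expansions for dyadic rationals, which forbids eventually-$1$ tails and hence forces $x^{>N}, y^{>N} < 2^{-N}$.

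The proof proceeds by case analysis on $s = a+b \in \{0,1,2\}$. Throughout I will use the following elementary fact: if $A \ge 0$ is a multiple of $2^{-k}$ and $B \in [0, 2^{-k})$, then the binary digits of $A+B$ at positions $\le k$ coincide with those of $A$ and at positions $>k$ with those of $B$; this follows from $\lfloor (A+B)\,2^k \rfloor = A\cdot 2^k$. If $s = 0$, then $x + y = S + \eta$, and the fact (applied with $k = M-1$) together with $\eta < 2^{-(N-1)}$ show that the digits of $x+y$ on $[M,N-1]$ all vanish. If $s = 2$, I rewrite
\[
x + y \;=\; \bigl(S + 2^{-(M-1)}\bigr) \;+\; \bigl(2^{-(M-1)} - 2^{-(N-1)}\bigr) \;+\; \eta,
\]
noting that $2^{-(M-1)} - 2^{-(N-1)} = \sum_{j=M}^{N-1} 2^{-j}$ has $1$s exactly at positions $[M, N-1]$. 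The three summands have binary supports contained in the disjoint position-ranges $\le M-1$, $[M, N-1]$, and $\ge N$ respectively, so no carries occur between blocks and the digits of $x+y$ on $[M, N-1]$ are all $1$.

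The delicate case is $s = 1$, where the middle term $\sum_{j=M}^N 2^{-j}$ has a $1$ at position $N$ that may collide with a $1$-bit of $\eta$ at the same position. I will split $\eta = \delta \cdot 2^{-N} + \eta''$ with $\delta = \eta_N \in \{0,1\}$ and $\eta'' \in [0, 2^{-N})$. If $\delta = 0$, the three summands of $x + y = S + \sum_{j=M}^N 2^{-j} + \eta''$ have disjoint binary supports in $\le M-1$, $[M,N]$, and $>N$, giving digits $1$ throughout $[M,N-1]$. If $\delta = 1$, the telescoping identity $(2^{-(M-1)} - 2^{-N}) + 2^{-N} = 2^{-(M-1)}$ collapses the middle term together with $\delta \cdot 2^{-N}$ into a single term, yielding
\[
x + y \;=\; \bigl(S + 2^{-(M-1)}\bigr) + \eta'',
\]
and since $\eta'' < 2^{-N} \le 2^{-(M-1)}$ while $S + 2^{-(M-1)}$ is a multiple of $2^{-(M-1)}$, the elementary fact gives digits $0$ throughout $[M, N-1]$. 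The main obstacle to a naive bit-by-bit analysis is the long carry cascade through the $1$s at positions $[M, N-1]$ precisely in the $\delta = 1$ subcase; the telescoping identity converts what would be an inductive carry argument into a clean disjoint-support computation.
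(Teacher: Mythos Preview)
Your proof is correct. The paper actually leaves this lemma to the reader and provides no proof of its own, so there is nothing to compare against. Your block decomposition and case analysis on $s=a+b\in\{0,1,2\}$ is clean; the only nontrivial point---the potential long carry chain in the case $s=1$---is handled correctly by the split on $\delta$ and the telescoping identity $(2^{-(M-1)}-2^{-N})+2^{-N}=2^{-(M-1)}$, and the bounds $S+2^{-(M-1)}\le x+y<2$ (resp.\ $S<2$) that you need for the ``elementary fact'' to apply are all guaranteed by the hypothesis $x+y<2$ together with the nonnegativity of the remaining summands.
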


\begin{proof}[Proof of Proposition~\rm\ref{prop:X_almost_sure_injection}]  Assume, on the contrary, that there exist points $u, v \in X_L$ such that 
$L(u) = L(v)$. As $\beta \neq 0$, we cannot have $u, v \in \{0\} \times \R$ or $u, v \in \{1\} \times \R$. Hence, we can assume $u \in \{0\} \times \R$, $v \in \{1\} \times \R$. Then, following the previous notation, we have $u = (0,a+n)$, $v = (1, b+m)$ for $a\in A$, $b \in B \setminus (B_{b'} \cup B_{b''} \cup \{1\})$, $n,m \in \Z$. Note that $b-a \in [-1,1)$, so by \eqref{eq:L_kernel}, we have
	\[ b - a = z \quad \text{ or } \quad  b - a = z - 1, \]
	for $z$ from \eqref{eq:z}, and \eqref{eq:A'B'_decomposition} implies
	\[ b - a = a' - b' \quad \text{ or } \quad b - a = a'' - b''.  \]
	Hence,
	\[ a + a' = b + b' \quad \text{ or }\quad  a + a'' = b + b''.  \]
	This is a contradiction, as Lemma~\ref{lem:binary_piecewise_constant} implies that the binary expansion sequences of $a + a'$ and $a + a''$ are constant on $[M_{k}, M_{k+1}-1) \cap \N$ for every even $k$, while by the condition $b \in B \setminus (B_{b'} \cup B_{b''} \cup \{1\})$, the binary expansion sequences of $b + b'$ and $b + b''$ are not constant on $[M_{k}, M_{k+1}-1) \cap \N$ for some even $k$. 
\end{proof}

\subsection{\boldmath Measure with $\hdim \mu < \underline{\dim}_{\,\it MB\,} \mu$}

To show that Theorem~\ref{thm:embed} is an actual strengthening of Theorem~\ref{thm:riegler}, we present an example of a measure $\mu$, for which 
$\hdim \mu  < \lmodbdim \mu$. More precisely, we show the following.

\begin{thm}\label{thm:hdim<lmodbdim}
	There exists a Borel probability measure $\mu$ on $[0,1]^2$, such that $\hdim \mu = 1$ and $\lmodbdim \mu=2$.
\end{thm}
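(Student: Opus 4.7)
The plan is to construct $\mu$ as a convex combination of two pieces of different geometric character: a rectifiable piece that pins the Hausdorff dimension of $\mu$ at $1$ from above, and a more delicate piece $\nu$ supported on a compact set $K$ with $\hdim K = 0$ but $\lmodbdim K = 2$, whose role is to force $\lmodbdim \mu = 2$.

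First I would construct a compact $K \subset [0,1]^2$ with $\hdim K = 0$ and $\lmodbdim K = 2$ by a Kan-type argument analogous to the one in the first example of Section~\ref{sec:examples}, but with $\udim$ replaced by $\ldim$. One picks a rapidly increasing sequence $(N_n)$ and inserts, at each scale $N_n^{-1}$, a finite $N_n^{-1}$-net of $[0,1]^2$ of cardinality of order $N_n^2$, each net point being slightly decorated by a tiny uncountable Cantor-type set so that the limit $K$ is uncountable. If the decorations are kept sparse enough, $\hdim K = 0$, while the net-like density at every scale $N_n^{-1}$ makes it impossible to cover $K$ by countably many compacts of lower box-counting dimension strictly less than $2$, so $\lmodbdim K = 2$.

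Next I would construct a non-atomic Borel probability measure $\nu$ on $K$ so that every Borel set $Y \subset K$ of full $\nu$-measure still inherits $\lmodbdim Y = 2$. The measure $\nu$ is distributed uniformly among the Cantor-type decorations at each scale $N_n^{-1}$ in a Bernoulli-product fashion. The property one must check is that no countable compact cover $Y \subset \bigcup_i K_i$ of $\nu$-full measure can satisfy $\sup_i \ldim K_i < 2$; the intuition is that such a cover, carrying full $\nu$-mass, would have to meet the net at every scale $N_n^{-1}$, and a quantified pigeonhole across the scales $(N_n)$ rules this out for a $\sigma$-compact set of smaller lower box-counting dimension.

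Finally, setting $\mu = \tfrac{1}{2}\lambda_L + \tfrac{1}{2}\nu$ with $\lambda_L$ the normalized length measure on a fixed segment $L \subset [0,1]^2$, the Hausdorff dimension is straightforward: $L \cup K$ has full $\mu$-measure and Hausdorff dimension $\max(1,0) = 1$, giving $\hdim \mu \leq 1$, while $\mu \geq \tfrac{1}{2}\lambda_L$ gives $\hdim \mu \geq \hdim \lambda_L = 1$. For the modified lower box-counting dimension, any Borel $Y$ with $\mu(Y) = 1$ satisfies $\nu(Y) = 1$, so by the defining property of $\nu$, $\lmodbdim Y = 2$, yielding $\lmodbdim \mu \geq 2$; the reverse inequality is trivial. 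The hardest step will be verifying the spreading property of $\nu$: the Bernoulli distribution must be fine enough that no $\sigma$-compact set of lower box-counting dimension less than $2$ can carry full $\nu$-mass, while $\nu$ itself remains concentrated on the Hausdorff-null compact $K$.
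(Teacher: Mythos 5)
There is a genuine gap, and it is located exactly where you predict the ``hardest step'' to be: the compact set $K$ with $\hdim K=0$ and $\lmodbdim K=2$, and the measure $\nu$ on it, cannot be produced by the mechanism you describe. First, a compact set containing an $N_n^{-1}$-net of $[0,1]^2$ for every $n$ is dense in $[0,1]^2$ and therefore \emph{equals} $[0,1]^2$, so it cannot have Hausdorff dimension $0$; you would have to drop compactness and work with the countable union of the decorated nets. But then the decisive obstruction appears: $\lmodbdim$ is countably stable, i.e.\ $\lmodbdim\big(\bigcup_i F_i\big)=\sup_i \lmodbdim F_i$, because covers of the pieces concatenate into a cover of the union. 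Your set is by construction covered by countably many ``tiny'' Cantor-type decorations, each of small lower box-counting dimension, and this single cover already forces $\lmodbdim K$ to be small. Net-like density at a sparse sequence of scales $N_n^{-1}$ can inflate $\udim$ (and, since an $\eps$-net of the square needs $\gtrsim\delta^{-2}$ balls of radius $\delta\ge\eps$ to cover it, even $\ldim$), but this is precisely the pathology that the \emph{modified} box dimension is designed to quotient out --- compare $\Q^2\cap[0,1]^2$, which has $\ldim=\udim=2$ but $\lmodbdim=0$. The same objection applies verbatim to $\nu$: the decorations themselves form a countable compact cover of a full-$\nu$-measure set with small $\sup_i\ldim K_i$, so no ``quantified pigeonhole across scales'' can rescue $\lmodbdim\nu=2$. (Your outer decomposition $\mu=\tfrac12\lambda_L+\tfrac12\nu$ and the reduction to properties of $\nu$ are fine; it is the existence of such a $\nu$ that fails.)

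The paper's construction avoids this trap by a structurally different mechanism. It builds a single measure $\mu=\int\mu_y\,d\Leb(y)$ fibered over Lebesgue measure in the $y$-direction, where $\mu_y$ is a Bernoulli product on binary digits whose blocks are alternately deterministic or uniform according to the digits of $y$. The deterministic blocks make the \emph{lower} local dimension $\le 1$ almost everywhere, giving $\hdim\mu\le1$ via the local-dimension formula \eqref{eq:hdim_local}; the uniform blocks, combined with the Lebesgue density theorem in the base direction, show that \emph{every Borel set of positive $\mu$-measure} meets at least $2^{(2-\frac1k)n}$ dyadic squares at suitable scales $2^{-n}$, hence has $\ldim\ge2$. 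That last statement is the correct target: since any countable compact cover of a full-measure set must contain a piece of positive measure, it immediately yields $\lmodbdim\mu\ge2$, and it is a statement about positive-measure sets, not about a fixed null-dimensional carrier. If you want to salvage your approach, you would need to prove directly that every positive-$\nu$-measure compact set has $\ldim\ge2$ --- and the countable-stability argument above shows your $\nu$ cannot have this property.
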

To begin the construction of $\mu$, fix an increasing sequence of positive integers $N_k$, $k \in \N$, such that $N_k \nearrow \infty$ with $\frac{S_k}{S_{k+1}} \leq \frac{1}{k+1}$, where $S_k = \sum \limits_{j=1}^k N_j$. Consider the probability distributions $\textbf{p}_0, \textbf{p}_1$ on $\{0,1\}$ given by 
\[
\textbf{p}_0 (\{0\}) = 0,\ \textbf{p}_0 (\{1\}) = 1, \qquad \textbf{p}_1 (\{0\}) = \textbf{p}_1 (\{1\}) =  \frac{1}{2}.
\]
For $y = 0.y_1 y_2 \ldots \in[0,1]$ (in this subsection we  assume that the binary expansion of $1$ is $0.111\ldots$), define the probability measure $\nu_y$ on $\{0,1\}^\N$ as the infinite product
\[ \nu_y = \bigotimes \limits_{j=1}^ \infty \bigotimes \limits_{i=1}^{N_j}\textbf{p}_{y_j}.\]
Further, let $\mu_y$ be the Borel probability measure on $[0,1]$ given by
\[ \mu_y = \pi_* \nu_y.\]
Finally, let $\mu$ be the Borel probability measure on $[0,1]^2$ defined as
\[ \mu = \int \limits_{[0,1]} \mu_y d\Leb(y), \quad\text{ i.e. } \mu(A) = \int \limits_{[0,1]} \mu_y(A^y) d\Leb(y) \quad\text{for a Borel set } A \subset [0,1]^2, \]
where $A^y = \{ x \in [0,1] : (x,y) \in A \}$. It is easy to see that $\mu$ is well-defined, as the function $y \mapsto \mu_y(A^y)$ is measurable for every Borel set $A \subset [0,1]^2$.

The proof of Theorem \ref{thm:hdim<lmodbdim} is based on the analysis of the local dimension of $\mu$, defined in terms of dyadic squares (rather then balls). For $n \in \N$ and $x_1, \ldots, x_n \in \{ 0,1 \}$ let $[x_1, \ldots, x_n]$ denote the dyadic interval corresponding to the sequence $(x_1, \ldots ,x_n)$, i.e.
\[[x_1, \ldots, x_n] =
\begin{cases}
\Big[\sum \limits_{j=1}^n \frac{x_j}{2^j}, \sum \limits_{j=1}^n \frac{x_j}{2^j} + \frac{1}{2^n}\Big) &\text{if } \sum \limits_{j=1}^n \frac{x_j}{2^j} + \frac{1}{2^n} < 1\\
\big[1 - \frac{1}{2^n}, 1\big] &\text{otherwise.}
\end{cases}
\]
Under this notation, for $n \in \N$ and $(x,y) \in [0,1]^2$ let $D_n(x,y)$ be the dyadic square of sidelength $2^{-n}$ containing $(x,y)$, i.e.
\[ D_n(x,y) = [x_1, \ldots, x_n] \times [y_1, \ldots, y_n], \quad\text{where } x = 0.x_1 x_2 \ldots \text{ and }y=0.y_1 y_2 \ldots .  \]
Recall that the box-dimensions can be defined equivalently in terms of dyadic squares. Precisely, let
$N'(X, 2^{-n})$ be the number of dyadic squares $D_n(x,y)$ of sidelength $2^{-n}$ intersecting $X$. Then (see e.g.~\cite[Section 2.1]{falconer2014fractal})
\begin{equation}\label{eq:bdim_dyadic} \ldim(X) = \liminf \limits_{n \to \infty} \frac{\log N'(X, 2^{-n})}{n \log 2} \text{ and }  \udim(X) = \limsup \limits_{n \to \infty} \frac{\log N'(X, 2^{-n})}{n \log 2}.\end{equation}
For a Borel finite measure $\mu$ on $[0,1]^2$ and $(x,y) \in [0,1]^2$ define the \emph{lower} and \emph{upper local dimension} of $\mu$ at $(x,y)$ as
\[ \underline{d}(\mu,(x,y)) = \liminf \limits_{n \to \infty} \frac{- \log \mu(D_n(x,y))}{n \log 2}, \quad \overline{d}(\mu,(x,y)) = \limsup \limits_{n \to \infty} \frac{- \log \mu(D_n(x,y))}{n \log 2}. \]
It is well-known (see e.g.~\cite[Propositions 3.10 and 3.20]{HochmanNotes}) that
\begin{equation}\label{eq:hdim_local}
\hdim \mu = \underset{(x,y) \sim \mu}{\mathrm{ess\ sup}}\ \underline{d}(\mu, (x,y)).
\end{equation}
The following lemma gives estimates on the measure of dyadic squares at suitable scales.
\begin{lem}\label{ref:lem_square_prob_ineqalities}
	Let $x=0.x_1x_2\ldots,\in [0,1]$, $y=0.y_1y_2\ldots \in [0,1]$, $n \in \N$ and $D = D_n(x,y) = [x_1, \ldots, x_n] \times [y_1, \ldots, y_n]$. Let $k \in \N$ be such that $S_k < n \leq S_{k+1}$. Then the following hold.
	\begin{enumerate}[\rm (a)]
		\item If $y_{k} = y_{k+1}=1$, then $\mu(D) \leq 2^{-(2 - \frac{1}{k})n}$.
		\item If $n = S_{k+1}$ and $y_{k+1} = 0$, then either $\mu(D) = 0$ or $\mu(D) \geq 2^{-(1 + \frac{1}{k+1})n}$.
	\end{enumerate}
\end{lem}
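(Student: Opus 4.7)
The plan is to compute $\mu(D)$ explicitly via the disintegration $\mu = \int \mu_y \, d\Leb(y)$, and then reduce the two estimates to the hypothesis $S_k/S_{k+1} \le 1/(k+1)$.

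First I would observe that $D^y = [x_1,\ldots,x_n]$ for $y \in [y_1,\ldots,y_n]$ and $D^y = \emptyset$ otherwise, so
\[
\mu(D) = \int_{[y_1,\ldots,y_n]} \mu_y([x_1,\ldots,x_n]) \, d\Leb(y).
\]
Next, modulo the countable (hence $\nu_y$-negligible when $\nu_y$ is non-atomic, and to be checked separately in the degenerate case) set of dyadic rationals, the dyadic interval $[x_1,\ldots,x_n]$ pulls back under $\pi$ to the cylinder $\{\omega \in \{0,1\}^\N : \omega_i = x_i, \; i = 1,\ldots,n\}$, and the product structure of $\nu_y$ gives a factorization indexed by the blocks $j=1,\ldots,k+1$ (where $S_k < n \le S_{k+1}$). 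Block $j \le k$ contributes $\mathbf{1}[x_{S_{j-1}+1} = \cdots = x_{S_j} = 1]$ if $y_j = 0$ and $2^{-N_j}$ if $y_j = 1$; block $k+1$ contributes $\mathbf{1}[x_{S_k+1} = \cdots = x_n = 1]$ if $y_{k+1} = 0$ and $2^{-(n-S_k)}$ if $y_{k+1} = 1$. Since $k+1 \le n$, the digits $y_1,\ldots,y_{k+1}$ are constant on $[y_1,\ldots,y_n]$, so the integrand equals a constant $P$ there, giving $\mu(D) = 2^{-n} P$.

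For (a), $y_k = y_{k+1} = 1$ forces the factors for blocks $k$ and $k+1$ to equal $2^{-N_k}$ and $2^{-(n-S_k)}$, while earlier factors are at most $1$, so $P \le 2^{-N_k - (n - S_k)} = 2^{-(n - S_{k-1})}$. Thus
\[
\mu(D) \le 2^{-n} \cdot 2^{-(n - S_{k-1})} = 2^{-(2n - S_{k-1})}.
\]
The hypothesis yields $S_{k-1}/S_k \le 1/k$, hence $S_{k-1} \le S_k/k < n/k$, and (a) follows.

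For (b), the condition $y_{k+1}=0$ with $n = S_{k+1}$ means the $(k+1)$-th block factor is either $0$ (in which case $P = 0$ and $\mu(D) = 0$) or $1$. In the nonzero case, each remaining $j \le k$ contributes either $1$ or $2^{-N_j}$, so $P \ge 2^{-\sum_{j=1}^k N_j} = 2^{-S_k}$, giving $\mu(D) \ge 2^{-(n + S_k)}$; applying $S_k \le S_{k+1}/(k+1) = n/(k+1)$ from the hypothesis yields (b).

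The main obstacle is not a hard idea but rather keeping the bookkeeping clean: pinning down exactly which $y$-digits the measure $\nu_y$ of a depth-$n$ cylinder depends on (namely $y_1,\ldots,y_{k+1}$, all fixed on $[y_1,\ldots,y_n]$), and correctly identifying the exponent $n - S_{k-1}$ in case (a) and $S_k$ in case (b) so that the hypothesis $S_k/S_{k+1} \le 1/(k+1)$ plugs in cleanly. A minor technical care-point is the atomic case $\nu_y = \delta_{(1,1,\ldots)}$ when all $y_j = 0$; this occurs on a $\Leb$-null set of $y$ and hence does not affect $\mu(D)$.
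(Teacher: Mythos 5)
Your proposal is correct and follows essentially the same route as the paper's proof: disintegrate $\mu(D)$ over the dyadic interval $[y_1,\ldots,y_n]$ (contributing the factor $2^{-n}$), use the product formula for $\nu_y$ of a depth-$n$ cylinder, and bound the resulting product by $2^{-(n-S_{k-1})}$ from above in case (a) and by $2^{-S_k}$ from below in the nonvanishing subcase of (b), after which the hypothesis $S_k/S_{k+1}\le 1/(k+1)$ is applied exactly as in the paper. Your extra remark on the $\Leb$-null set of $y$ for which $\nu_y$ is atomic is a harmless refinement the paper leaves implicit.
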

\begin{proof}
	Note that for $y' = 0.y'_1 y'_2\ldots \in [0,1]$ such that $(y'_1, \ldots, y'_n) = (y_1, \ldots, y_n)$ we have
	\begin{equation}\label{eq:square_prob}
	\begin{aligned}
	\mu_{y'}(D^{y'}) = \mu_{y'}([x_1, \ldots, x_n]) = \ &\textbf{p}_{y'_1}(\{x_1\}) \cdots \textbf{p}_{y'_1}(\{x_{S_1}\}) \textbf{p}_{y'_2}(\{x_{S_1 + 1}\}) \cdots \textbf{p}_{y'_2}(\{x_{S_2}\})\\
	& \cdots \textbf{p}_{y'_{k+1}}(\{x_{S_k + 1}\}) \cdots \textbf{p}_{y'_{k+1}}(\{x_{n}\}).
	\end{aligned}
	\end{equation}
	Moreover, as $k < n$, the value of $\mu_{y'}(D^{y'})$ depends only on $(y_1, \ldots, y_n)$ and $(x_1, \ldots, x_n)$. Using (\ref{eq:square_prob}), we can prove both assertions of the lemma, as follows.
	\paragraph*{Ad (a)}
	
	\
	
		If $y_{k} = y_{k+1}=1$, then for $j \in \{S_{k-1} + 1, \ldots, n\}$ we have $\textbf{p}_{y_l}(x_j) = \frac{1}{2}$, where $l \in \{ k, k+1\}$ is such that $S_{l-1}<j \leq S_{l}$. Therefore, in the product \eqref{eq:square_prob} there is at least $n - S_{k-1}$ terms equal to $\frac{1}{2}$. Consequently,
		\[ \mu_{y'}(D^{y'}) \leq 2^{-(n - S_{k-1})} = 2^{-(1 - \frac{S_{k-1}}{n})n} \leq 2^{-(1 - \frac{S_{k-1}}{S_k})n} \leq 2^{-(1 - \frac{1}{k})n},\]
		hence
		\[ \mu(D) = \int \limits_{[y_1, \ldots, y_n]} \mu_{y'}(D^{y'})d\Leb(y') \leq \Leb([y_1, \ldots, y_n]) 2^{-n(1 - \frac{1}{k})} = 2^{-n(2 - \frac{1}{k})}.  \]

		\paragraph*{Ad (b).}
		
		\
		
		Assume that $\mu(D) \neq 0$. Then all the terms in (\ref{eq:square_prob}) have to be non-zero, so every term is equal to either $\frac{1}{2}$ or $1$. Moreover, as $y_{k+1} = 0$ and $n = S_{k+1}$, we have
		\[ \textbf{p}_{y_{k+1}}(\{x_{S_k + 1}\}) \cdots \textbf{p}_{y_{k+1}}(\{x_{n}\}) = 1\]
		and, consequently,
		\begin{align*}
		\begin{split}
		\mu(D)  = \ &2^{-n} \textbf{p}_{y_1}(\{x_1\}) \cdots \textbf{p}_{y_1}(\{x_{S_1}\}) \textbf{p}_{y_2}(\{x_{S_1 + 1}\}) \cdots \textbf{p}_{y_2}(\{x_{S_2}\})\\
		& \cdots \textbf{p}_{y_{k}}(\{x_{S_{k-1} + 1}\}) \cdots \textbf{p}_{y_{k}}(\{x_{S_k}\}) \geq 2^{-n-S_k} = 2^{-(1 + \frac{S_k}{S_{k+1}})n} \geq 2^{-(1 + \frac{1}{k+1})n}.
		\end{split}
		\end{align*}
	
\end{proof}
Now we are ready to give the proof of Theorem~\ref{thm:hdim<lmodbdim}.
\begin{proof}[Proof of Theorem~\rm\ref{thm:hdim<lmodbdim}] We begin by proving $\hdim \mu =1$. Note that $\hdim \mu \geq 1$, as $\mu$ projects under $[0,1]^2 \ni (x,y) \mapsto y \in [0,1]$ to the Lebesgue measure, so it is sufficient to show $\hdim \mu \leq 1$. By (\ref{eq:hdim_local}), it is enough to prove that $\underline{d}(\mu, (x,y)) \leq 1$ for $\mu$-almost every $(x,y) \in [0,1]$. Note that for Lebesgue almost every $y = 0.y_1 y_2\ldots\in [0,1]$, the sequence $(y_1, y_2, \ldots)$ contains infinitely many zeros. Hence, it is sufficient to show $\underline{d}(\mu, (x,y)) \leq 1$ for $\mu_y$-almost every $x\in [0,1]$, assuming that $y\in [0,1]$ has this property.  Moreover, for $\mu_y$-almost every $x \in [0,1]$, we have $\mu(D_n(x,y)) > 0$ for all $n \in \N$ (see \eqref{eq:square_prob}). For such $x$, by Lemma~\ref{ref:lem_square_prob_ineqalities}(b), we have
	\[ \underline{d}(\mu,(x,y)) \leq \liminf \limits_{k \to \infty} \frac{-\log \mu(D_{S_{n_k}}(x,y))}{S_{n_k} \log 2} \leq \lim \limits_{k \to \infty}\frac{(1 + \frac{1}{n_k})S_{n_k}}{S_{n_k}} = 1. \]
	Therefore, $\hdim \mu \leq 1$, so in fact $\hdim \mu = 1$.

	Let us prove now $\lmodbdim \mu = 2$. Since $\mu$ is supported on $[0,1]^2$, it suffices to show $\lmodbdim \mu \geq 2$. Let $A \subset [0,1]^2$ be a Borel set with $\mu(A)>0$. We show $\ldim A \geq 2$. Note that there exists $c > 0$ such that the set
	\begin{equation}\label{eq:B_measure}
	  B = \{ y \in [0,1] : \mu_y(A^y) \geq c \}
	\end{equation}
satisfies $\Leb(B) > 0.$ Fix $\eps \in (0, \frac{1}{4})$. By the Lebesgue density theorem (see e.g.~\cite[Corollary 3.16]{HochmanNotes}), there exists a dyadic interval $I \subset [0,1]$ such that
	\begin{equation}\label{eq:I_density}
	\frac{\Leb(B \cap I)}{|I|} \geq 1 - \eps,
	\end{equation}
	where $|I| = 2^{-N}$ is the length of $I$. Fix $k \geq N+2$ and $n \in \{S_k + 1,  \ldots, S_{k+1} \}$. Consider the collection $\mathcal{C}_n$ of dyadic intervals of length $2^{-n}$ defined as
	\[ \mathcal{C}_n = \{ [y_1, \ldots, y_n] : y_k = y_{k+1} = 1 \text{ and } [y_1, \ldots, y_n] \cap B \cap I \neq \emptyset \}.\]
	By \eqref{eq:I_density}, we have
	\begin{equation}\label{eq:BC_n_measure} \Leb\Big(B \cap \bigcup \mathcal{C}_n\Big) \geq \Big(\frac{1}{4} - \eps\Big)2^{-N}.
	\end{equation}
	Let 
	\[
	A_n = A \cap \Big([0,1] \times \Big(B \cap \bigcup \mathcal{C}_n\Big)\Big).
	\]
	Then $A_n \subset A$ and (\ref{eq:B_measure}) together with (\ref{eq:BC_n_measure}) imply
	\begin{equation}\label{eq:A0_measure}
	\mu(A_n) = \int \limits_{B \cap \bigcup \mathcal{C}_n} \mu_y(A^y) d\Leb(y) \geq c \Big(\frac{1}{4} - \eps\Big)2^{-N}.
	\end{equation}
	Note that the above lower bound does not depend on $k$ and $n$.
	Let $N'(A_n, 2^{-n})$ be the number of dyadic squares of sidelength $2^{-n}$ intersecting $A_n$. If $D = I_1 \times I_2$ is a dyadic square of sidelength $2^{-n}$ intersecting $A_n$, then $I_2 \in\mathcal{C}_n$, hence by Lemma~\ref{ref:lem_square_prob_ineqalities}(a) we have
	\[ \mu(D) \leq 2^{-(2 - \frac{1}{k})n}. \]
	As any two dyadic squares of the same sidelength are either equal or disjoint, \eqref{eq:A0_measure} gives
	\[ N'(A, 2^{-n}) \geq N'(A_n, 2^{-n}) \geq c \Big(\frac{1}{4} - \eps\Big)2^{-N  + (2 - \frac{1}{k})n}. \]
	Since $k$ and $n$ can be taken arbitrary large, invoking (\ref{eq:bdim_dyadic}) gives $\ldim A \geq 2$. Hence, $\lmodbdim \mu \geq 2$, so in fact $\lmodbdim \mu = 2$.
\end{proof}
\begin{rem}
	Note that as
	\[ \underset{z \sim \mu}{\mathrm{ess\ sup}}\ \underline{d}(\mu, z) = \hdim \mu \leq \lmodbdim \mu \leq \umodbdim \mu = \dim_P \mu = \underset{z \sim \mu}{\mathrm{ess\ sup}}\ \overline{d}(\mu, z) \]
	($\dim_P$ denotes the packing dimension, see e.g.~\cite[Proposition 3.9]{falconer2014fractal} and \cite[Proposition 10.3]{FalconerTechniques}), the equality $\hdim \mu = \lmodbdim \mu$ holds for all \emph{exact dimensional} measures $\mu$, i.e.~the measures $\mu$ with $\underline{d}(\mu, z) = \overline{d}(\mu, z) = \mathrm{const}$ for $\mu$-almost every $z$.
\end{rem}

\bibliographystyle{alpha}
\bibliography{universal_bib}

\newcommand{\etalchar}[1]{$^{#1}$}
\def\cprime{$'$} \def\cprime{$'$}
\begin{thebibliography}{BAEFN93}

\bibitem[ABD{\etalchar{+}}19]{Riegler18}
G.~{Alberti}, H.~{B\"{o}lcskei}, C.~{De Lellis}, G.~{Koliander}, and
  E.~{Riegler}.
\newblock Lossless analog compression.
\newblock {\em IEEE Transactions on Information Theory}, 65(11):7480--7513,
  2019.

\bibitem[BAEFN93]{BAEFN93}
Asher Ben-Artzi, Alp Eden, Ciprian Foias, and Basil Nicolaenko.
\newblock H\"{o}lder continuity for the inverse of {M}a\~{n}\'{e}'s projection.
\newblock {\em J. Math. Anal. Appl.}, 178(1):22--29, 1993.

\bibitem[Ban51]{Banach51}
Stefan Banach.
\newblock {\em Wst\k{e}p do teorii funkcji rzeczywistych (Polish) [Introduction
  to the theory of real functions]}.
\newblock Monografie Matematyczne. Tom XVII. Polskie Towarzystwo Matematyczne,
  Warszawa-Wroc\l{}aw, 1951.

\bibitem[Bil99]{Billingsley99}
Patrick Billingsley.
\newblock {\em Convergence of probability measures}.
\newblock Wiley Series in Probability and Statistics: Probability and
  Statistics. John Wiley \& Sons, Inc., New York, second edition, 1999.
\newblock A Wiley-Interscience Publication.

\bibitem[Cab00]{CaballeroEmbed}
Victoria Caballero.
\newblock On an embedding theorem.
\newblock {\em Acta Math. Hungar.}, 88(4):269--278, 2000.

\bibitem[EFNT94]{EFNT94}
Alp Eden, Ciprian Foias, Basil Nicolaenko, and Roger~M. Temam.
\newblock {\em Exponential attractors for dissipative evolution equations},
  volume~37 of {\em RAM: Research in Applied Mathematics}.
\newblock Masson, Paris; John Wiley \& Sons, Ltd., Chichester, 1994.

\bibitem[ER85]{ER85}
Jean-Pierre Eckmann and David Ruelle.
\newblock Ergodic theory of chaos and strange attractors.
\newblock {\em Rev. Modern Phys.}, 57(3, part 1):617--656, 1985.

\bibitem[Fal97]{FalconerTechniques}
Kenneth Falconer.
\newblock {\em Techniques in fractal geometry}.
\newblock John Wiley \& Sons, Ltd., Chichester, 1997.

\bibitem[Fal14]{falconer2014fractal}
Kenneth Falconer.
\newblock {\em Fractal geometry}.
\newblock John Wiley \& Sons, Ltd., Chichester, third edition, 2014.
\newblock Mathematical foundations and applications.

\bibitem[FR02]{FR02}
Peter~K. Friz and James~C. Robinson.
\newblock Constructing an elementary measure on a space of projections.
\newblock {\em J. Math. Anal. Appl.}, 267(2):714--725, 2002.

\bibitem[GQS18]{GQS18}
Yonatan Gutman, Yixiao Qiao, and G\'{a}bor Szab\'{o}.
\newblock The embedding problem in topological dynamics and {T}akens' theorem.
\newblock {\em Nonlinearity}, 31(2):597--620, 2018.

\bibitem[GS00]{poly-interpolation}
Mariano Gasca and Thomas Sauer.
\newblock Polynomial interpolation in several variables.
\newblock {\em Adv. Comput. Math.}, 12(4):377--410, 2000.
\newblock Multivariate polynomial interpolation.

\bibitem[Gut16]{Gut16}
Yonatan Gutman.
\newblock Taken's embedding theorem with a continuous observable.
\newblock In {\em Ergodic theory}, pages 134--141. De Gruyter, Berlin, 2016.

\bibitem[GVL13]{MatrixComputations}
Gene~H. Golub and Charles~F. Van~Loan.
\newblock {\em Matrix computations}.
\newblock Johns Hopkins Studies in the Mathematical Sciences. Johns Hopkins
  University Press, Baltimore, MD, fourth edition, 2013.

\bibitem[HBS15]{HBS15}
Franz Hamilton, Tyrus Berry, and Timothy Sauer.
\newblock Predicting chaotic time series with a partial model.
\newblock {\em Phys. Rev. E}, 92:010902, Jul 2015.

\bibitem[HGLS05]{hgls05distinguishing}
Chih-Hao Hsieh, Sarah~M. Glaser, Andrew~J. Lucas, and George Sugihara.
\newblock Distinguishing random environmental fluctuations from ecological
  catastrophes for the {N}orth {P}acific {O}cean.
\newblock {\em Nature}, 435(7040):336--340, 2005.

\bibitem[HK99]{HK99}
Brian~R. Hunt and Vadim~Yu. Kaloshin.
\newblock Regularity of embeddings of infinite-dimensional fractal sets into
  finite-dimensional spaces.
\newblock {\em Nonlinearity}, 12(5):1263--1275, 1999.

\bibitem[Hoc14]{HochmanNotes}
Michael Hochman.
\newblock Lectures on dynamics, fractal geometry, and metric number theory.
\newblock {\em J. Mod. Dyn.}, 8(3-4):437--497, 2014.

\bibitem[HW41]{HW41}
Witold Hurewicz and Henry Wallman.
\newblock {\em Dimension {T}heory}.
\newblock Princeton Mathematical Series, v. 4. Princeton University Press,
  Princeton, N. J., 1941.

\bibitem[Kec95]{K95}
Alexander~S. Kechris.
\newblock {\em Classical descriptive set theory}, volume 156 of {\em Graduate
  Texts in Mathematics}.
\newblock Springer-Verlag, New York, 1995.

\bibitem[KY90]{KY90}
Eric~J. Kostelich and James~A. Yorke.
\newblock Noise reduction: finding the simplest dynamical system consistent
  with the data.
\newblock {\em Phys. D}, 41(2):183--196, 1990.

\bibitem[Mar54]{Marstrand}
John~M. Marstrand.
\newblock Some fundamental geometrical properties of plane sets of fractional
  dimensions.
\newblock {\em Proc. London Math. Soc. (3)}, 4:257--302, 1954.

\bibitem[Mat75]{Mattila-proj}
Pertti Mattila.
\newblock Hausdorff dimension, orthogonal projections and intersections with
  planes.
\newblock {\em Ann. Acad. Sci. Fenn. Ser. A I Math.}, 1(2):227--244, 1975.

\bibitem[Mat95]{mattila}
Pertti Mattila.
\newblock {\em Geometry of sets and measures in Euclidean spaces}, volume~44 of
  {\em Cambridge Studies in Advanced Mathematics}.
\newblock Cambridge University Press, Cambridge, 1995.
\newblock Fractals and rectifiability.

\bibitem[MGNS18]{SugiharaFishes}
Stephan~B. Munch, Alfredo Giron-Nava, and George Sugihara.
\newblock Nonlinear dynamics and noise in fisheries recruitment: A global
  meta-analysis.
\newblock {\em Fish and Fisheries}, 19(6):964--973, 2018.

\bibitem[Min70]{minty1970extension}
George~J. Minty.
\newblock On the extension of {L}ipschitz, {L}ipschitz-{H}{\"o}lder continuous,
  and monotone functions.
\newblock {\em Bulletin of the American Mathematical Society}, 76(2):334--339,
  1970.

\bibitem[Mn81]{Mane81}
Ricardo Ma\~{n}\'{e}.
\newblock On the dimension of the compact invariant sets of certain nonlinear
  maps.
\newblock In {\em Dynamical systems and turbulence, {W}arwick 1980 ({C}oventry,
  1979/1980)}, volume 898 of {\em Lecture Notes in Math.}, pages 230--242.
  Springer, Berlin-New York, 1981.

\bibitem[NV18]{NV18}
Raymundo Navarrete and Divakar Viswanath.
\newblock Prevalence of delay embeddings with a fixed observation function.
\newblock {\em Preprint. https://arxiv.org/abs/1806.07529}, 2018.

\bibitem[PCFS80]{PCFS80}
Norman~H. Packard, James~P. Crutchfield, J.~Doyne Farmer, and Robert~S. Shaw.
\newblock Geometry from a time series.
\newblock {\em Phys. Rev. Lett.}, 45:712--716, Sep 1980.

\bibitem[PdM82]{palis1982geometric}
Jacob Palis, Jr. and Welington de~Melo.
\newblock {\em Geometric theory of dynamical systems}.
\newblock Springer-Verlag, New York-Berlin, 1982.
\newblock An introduction, Translated from the Portuguese by A. K. Manning.

\bibitem[Rob05]{Rob05}
James~C. Robinson.
\newblock A topological delay embedding theorem for infinite-dimensional
  dynamical systems.
\newblock {\em Nonlinearity}, 18(5):2135--2143, 2005.

\bibitem[Rob11]{Rob11}
James~C. Robinson.
\newblock {\em Dimensions, embeddings, and attractors}, volume 186 of {\em
  Cambridge Tracts in Mathematics}.
\newblock Cambridge University Press, Cambridge, 2011.

\bibitem[Rud87]{R87}
Walter Rudin.
\newblock {\em Real and complex analysis}.
\newblock McGraw-Hill Book Co., New York, third edition, 1987.

\bibitem[SBDH97]{StarkEmbedSurvey}
Jaroslav Stark, David~S. Broomhead, Michael~Evan Davies, and Jeremy~P. Huke.
\newblock Takens embedding theorems for forced and stochastic systems.
\newblock In {\em Proceedings of the {S}econd {W}orld {C}ongress of {N}onlinear
  {A}nalysts, {P}art 8 ({A}thens, 1996)}, volume~30, pages 5303--5314, 1997.

\bibitem[SBDH03]{StarkStochEmbed}
Jaroslav Stark, David~S. Broomhead, Michael~Evan Davies, and Jeremy~P. Huke.
\newblock Delay embeddings for forced systems. {II}. {S}tochastic forcing.
\newblock {\em J. Nonlinear Sci.}, 13(6):519--577, 2003.

\bibitem[SGM90]{sgm90distinguishing}
George Sugihara, Bryan Grenfell, and Robert May.
\newblock Distinguishing error from chaos in ecological time-series.
\newblock {\em Philosophical Transactions of the Royal Society B-Biological
  Sciences}, 330(1257):235--251, 1990.

\bibitem[SM90]{sm90nonlinear}
George Sugihara and Robert May.
\newblock Nonlinear forecasting as a way of distinguishing chaos from
  measurement error in time series.
\newblock {\em Nature}, 344(6268):734--741, 1990.

\bibitem[Sta99]{1999delay}
Jaroslav Stark.
\newblock Delay embeddings for forced systems. {I}. {D}eterministic forcing.
\newblock {\em J. Nonlinear Sci.}, 9(3):255--332, 1999.

\bibitem[SY97]{SauerYorke97}
Timothy~D. Sauer and James~A. Yorke.
\newblock Are the dimensions of a set and its image equal under typical smooth
  functions?
\newblock {\em Ergodic Theory Dynam. Systems}, 17(4):941--956, 1997.

\bibitem[SYC91]{SYC91}
Timothy~D. Sauer, James~A. Yorke, and Martin Casdagli.
\newblock Embedology.
\newblock {\em J. Statist. Phys.}, 65(3-4):579--616, 1991.

\bibitem[Tak81]{T81}
Floris Takens.
\newblock Detecting strange attractors in turbulence.
\newblock In {\em Dynamical systems and turbulence, {W}arwick 1980}, volume 898
  of {\em Lecture Notes in Math.}, pages 366--381. Springer, Berlin-New York,
  1981.

\bibitem[Vos03]{Voss03}
Henning~U. Voss.
\newblock Synchronization of reconstructed dynamical systems.
\newblock {\em Chaos}, 13(1):327--334, 2003.

\bibitem[Whi36]{Whitney36}
Hassler Whitney.
\newblock Differentiable manifolds.
\newblock {\em Ann. of Math. (2)}, 37(3):645--680, 1936.

\bibitem[Yor69]{y69}
James~A. Yorke.
\newblock Periods of periodic solutions and the {L}ipschitz constant.
\newblock {\em Proc. Amer. Math. Soc.}, 22:509--512, 1969.

\end{thebibliography}
 
\end{document}